\pdfoutput=1
\documentclass[10pt, a4paper, notitlepage]{article}

\usepackage{biblatex}
\addbibresource{bibliography.bib}


\usepackage{bbm}

\usepackage{etoolbox}
\usepackage{tikz}
\usetikzlibrary{calc}
\usetikzlibrary{cd}
\usetikzlibrary{decorations.markings}
\usetikzlibrary{decorations.pathreplacing}
\usetikzlibrary{decorations.pathmorphing}
\usetikzlibrary{decorations.text}
\usetikzlibrary{arrows.meta}
\usetikzlibrary{arrows}
\usetikzlibrary{positioning}

\usepackage{geometry} 
\usepackage{tocloft} 
\usepackage{fancyhdr} 
\usepackage{longtable}
\usepackage{subcaption}
\usepackage{enumitem}
\usepackage{amssymb}
\usepackage{amsmath}
\usepackage{stackrel} 
\usepackage{uniinput}
\usepackage{amsthm}
\usepackage{stmaryrd}
\usepackage{mathtools}
\usepackage{fouridx}
\usepackage{xparse}
\usepackage{bm}
\usepackage{aliascnt}
\usepackage{import}

\usepackage{hyperref}
\theoremstyle{definition}
\newtheorem{theorem}{Theorem}
\let\emph\textbf

\newaliascnt{remark}{theorem}
\newaliascnt{definition}{theorem}
\newaliascnt{proposition}{theorem}
\newaliascnt{lemma}{theorem}
\newaliascnt{corollary}{theorem}
\newaliascnt{example}{theorem}
\newaliascnt{convention}{theorem}
\newaliascnt{todo}{theorem}

\newtheorem{remark}[remark]{Remark}
\newtheorem{definition}[definition]{Definition}
\newtheorem{proposition}[proposition]{Proposition}
\newtheorem{lemma}[lemma]{Lemma}

\newtheorem{example}[example]{Example}

\aliascntresetthe{remark}
\aliascntresetthe{definition}
\aliascntresetthe{proposition}
\aliascntresetthe{lemma}
\aliascntresetthe{corollary}
\aliascntresetthe{example}
\aliascntresetthe{convention}
\aliascntresetthe{todo}

\numberwithin{equation}{section}
\numberwithin{figure}{section}
\numberwithin{table}{section}
\makeatletter\let\c@table\c@figure\makeatother

\numberwithin{theorem}{section}
\numberwithin{remark}{section}
\numberwithin{definition}{section}
\numberwithin{proposition}{section}
\numberwithin{lemma}{section}
\numberwithin{corollary}{section}
\numberwithin{example}{section}
\numberwithin{convention}{section}
\numberwithin{todo}{section}

\DeclareFieldFormat*{url}{}
\DeclareFieldFormat*{doi}{}
\DeclareFieldFormat*{issn}{}
\DeclareFieldFormat[misc]{url}{\mkbibacro{URL}\addcolon\space\url{#1}}
\DeclareFieldFormat[online]{url}{\mkbibacro{URL}\addcolon\space\url{#1}}

\geometry{textwidth=16cm, textheight=25cm, inner=3cm, top=2.5cm}

\setcounter{secnumdepth}{5}

\setcounter{tocdepth}{2}

\setcounter{totalnumber}{6}
\setcounter{topnumber}{6}

\setlist{topsep=0.5em, itemsep=0em}
\renewcommand{\arraystretch}{1.3}

\SetLabelAlign{parleftalign}{\parbox[t]\textwidth{#1\par\mbox{}}}




\newcommand{\id}{\mathrm{id}}

\newcommand{\Hom}{\operatorname{Hom}}
\newcommand{\Ker}{\operatorname{Ker}}

\newcommand{\cat}[1]{\mathcal{#1}}

\newcommand{\Gtl}{\operatorname{Gtl}}

\newcommand{\HH}{\operatorname{HH}}
\newcommand{\HC}{\operatorname{HC}}

\newcommand{\odd}{\operatorname{odd}}
\newcommand{\even}{\operatorname{even}}

\def\Im{\operatorname{Im}}

\newcommand{\running}{~|~}

\newcommand{\sporadic}{\mathbb{P}_0}


\NewDocumentCommand{\tensor}{t_}
 {%
  \IfBooleanTF{#1}
   {\tensorop}
   {\otimes}%
 }
\NewDocumentCommand{\tensorop}{m}
 {%
  \mathbin{\mathop{\otimes}\displaylimits_{#1}}%
 }

\newcommand{\angletop}{\mathrm{top}}
\newcommand{\anglemid}{\mathrm{mid}}
\newcommand{\anglebot}{\mathrm{bot}}
\newcommand{\anglefirst}{\mathrm{first}}
\newcommand{\anglenext}{\mathrm{next}}
\newcommand{\Sporadic}{\mathsf{S}}

\newcommand{\paperone}{\cite{Paper-I}}
\newcommand{\papertwoA}{\cite{Paper-IIA}}
\newcommand{\cA}{\mathcal{A}}

\title{Explicit Hochschild Cocycles for Gentle Algebras}
\author{Jasper van de Kreeke}
\date{15 March 2023}

\begin{document}

\maketitle

\newcommand{\foobarfoo}{Test}

\begin{abstract}
Hochschild cohomology is crucial for understanding deformation theory. In \paperone, we have computed the Hochschild cohomology for gentle algebras of punctured surfaces. The construction of that paper is rather implicit and fails if the punctured surface has only a single puncture. In the present note, we supplement the earlier method by providing an explicit construction of Hochschild cocycles which also succeeds in the case of a single puncture.
\end{abstract}

\tableofcontents

\section{Introduction}
Hochschild cohomology is a crucial invariant for understanding an object's deformation theory. Gentle algebras are discrete models for Fukaya categories of punctured surfaces. We have computed the Hochschild cohomology of gentle algebras in \paperone\ under a technical restriction regarding the surface. In this note, we go beyond the restriction. More precisely, we extend the computation of \paperone\ to include all gentle algebras of punctured surfaces. The idea is to write down explicit Hochschild cocycles, instead of constructing them implicitly as in \paperone.

The findings from \paperone\ can be summarized as follows:
\begin{description}
\item[Generation criterion:] It determines for a given collection of cocycles with certain prescribed shape in low adicity whether it concerns a basis of Hochschild cohomology or not. We recall the generation criterion in \autoref{sec:existing-crit}.
\item[Odd cocycles:] We provided an explicit collection of odd cocycles. This family satisfies the requirements of the generation criterion and therefore forms a basis for odd Hochschild cohomology. We recall the odd cocycles in \autoref{sec:existing-odd}.
\item[Sporadic even cocycles:] We provided a semi-explicit collection of even cocycles. In the present note, we refer to them as the \emph{sporadic} cocycles. We recall the sporadic cocycles in \autoref{sec:existing-sporadic}.
\item[Ordinary even cocycles:] We provided an implicit collection of even cocycles under the assumption that the arc system satisfies the [NL2] condition. In the present paper, we refer to these cocycles as \emph{ordinary} even cocycles. The sporadic and ordinary even cocycles together satisfy the generation criterion. Under the condition [NL2], they form a basis for even Hochschild cohomology.
\end{description}

The problem with \paperone\ is the requirement of the [NL2] condition. In fact, the ordinary even cocycles were constructed in \paperone\ as cup products of carefully selected sporadic and odd classes. The sporadic classes required for this construction only exist if the [NL2] condition holds. Without the [NL2] condition, the construction via the cup product fails, leaving us without proof of existence of the ordinary even cocycles. The aim of the present note is to circumvent the [NL2] condition. The idea is to provide an explicit construction for the ordinary even cocycles. The amount of explicitness makes the construction independent of the [NL2] condition. A drawback is that long calculations are required to check that the ordinary even classes actually satisfy the cocycle condition. This is the reason we cut the present note from \paperone.

This note is organized as follows: In \autoref{sec:4prelim}, we recall $ A_∞ $-categories, Hochschild cohomology and gentle algebras. In \autoref{sec:existing}, we recall the generation criterion, the odd cocycles and the sporadic even cocycles from \paperone. In \autoref{sec:even}, we construct the ordinary even cocycles and perform detailed checks that they indeed satisfy the Hochschild cocycle condition. We summarize the findings in \autoref{sec:even-summary}.

\section{Preliminaries}
\label{sec:4prelim}
In this section, we recall $ A_∞ $-categories, Hochschild cohomology and gentle algebras. A more extensive introduction can be found in \paperone\ or \papertwoA.

\subsection{$ A_∞ $-categories}
In this section we briefly recall $ A_∞ $-categories.

\begin{definition}
A ($ ℤ $- or $ ℤ/2ℤ $-graded, strictly unital) \emph{$ A_∞ $-category} $ \cat C $ (over e.g.~$ ℂ $) consists of a collection of objects together with $ ℤ $- or $ ℤ/2ℤ $-graded hom spaces $ \Hom(X, Y) $, distinguished identity morphisms $ \id_X ∈ \Hom^0 (X, X) $ for all $ X ∈ \cat C $, together with multilinear higher products
\begin{equation*}
μ^k: \Hom(X_k, X_{k+1}) ¤ … ¤ \Hom(X_1, X_2) → \Hom(X_1, X_{k+1}), \quad k ≥ 1
\end{equation*}
of degree $ 2-k $ such that the $ A_∞ $-relations and strict unitality axioms hold: For every compatible morphisms $ a_1, …, a_k $ we have
\begin{align*}
& \sum_{0 ≤ n < m ≤ k} (-1)^{‖a_n‖ + … + ‖a_1‖} μ(a_k, …, μ(a_m, …, a_{n+1}), a_n, …, a_1) = 0, \\
& μ^2 (a, \id_X) = a, ~ μ^2 (\id_Y, a) = (-1)^{|a|} a, ~ μ^{≥3} (…, \id_X, …) = 0.
\end{align*}
The symbol $ ‖a‖ = |a| - 1 $ denotes the reduced degree of $ a $.
\end{definition}

\subsection{The Hochschild DGLA}
\label{sec:4prelim-hochschild}
In this section, we recall Hochschild cohomology for $ A_∞ $-categories. First, we recall DG Lie algebras (DGLAs). Second, we recall the Hochschild complex for $ A_∞ $-categories together with its DGLA structure. Third, we comment on the cup product.

\begin{definition}
A \emph{DG Lie algebra} (DGLA) is a $ ℤ $- or $ ℤ/2ℤ $-graded vector space $ L $ together with a differential $ d: L^i → L^{i+1} $ and a bracket $ [-, -]: L × L → L $ satisfying the Leibniz rule and the Jacobi identity:
\begin{align*}
[a, b] &= (-1)^{|a||b| + 1} [b, a], \\
d(d(a)) &= 0, \\
d([a, b]) &= [da, b] + (-1)^{|a|} [a, db], \\
0 &= (-1)^{|a||c|} [a, [b, c]] + (-1)^{|b||a|} [b, [c, a]] + (-1)^{|c||b|} [c, [a, b]].
\end{align*}
\end{definition}

Hochschild cohomology has historically been defined for ordinary associative algebras. The Hochschild complex carries a natural DGLA structure. In more modern times, Hochschild cohomology together with the DGLA structure has been extended to the case of $ A_∞ $-categories. We recall this construction as follows:

\begin{definition}
Let $ \cat C $ be a $ ℤ $- or $ ℤ/2ℤ $-graded $ A_∞ $-category. Then its Hochschild complex $ \HC(\cat C) $ is given by the graded vector space
\begin{equation*}
\HC(\cat C) = \prod_{\substack{X_1, …, X_{k+1} ∈ \cat C \\ k ≥ 0}} \Hom\big(\Hom_{\cat C} (X_k, X_{k+1})[-1] \tensor … \tensor \Hom_{\cat C} (X_1, X_2)[-1], \Hom_{\cat C} (X_1, X_{k+1})[-1]\big).
\end{equation*}
For $ η, ω ∈ \HC(\cat C) $, temporarily denote by $ μ · ω ∈ \HC(\cat C) $ the Gerstenhaber product given by
\begin{equation*}
(η · ω) (a_k, …, a_1) = \sum (-1)^{(∥a_l∥ + … + \Vert a_1 \Vert)∥ω∥} η(a_k, …, ω(…), a_l, …, a_1).
\end{equation*}
Define a $ ℤ $- or $ ℤ/2ℤ $-graded DGLA structure on $ \HC(\cat C) $ as follows: Its grading $ ‖·‖ $ is the one induced from the shifted degrees of the hom spaces of $ \cat C $. In other words, we have the equation
\begin{equation*}
‖η(a_k, …, a_1)‖ = ‖η‖ + ‖a_k‖ + … + ‖a_1‖, \quad η ∈ \HC(\cat C).
\end{equation*}
The bracket on $ \HC(\cat C) $ is the Gerstenhaber bracket
\begin{equation*}
[η, ω] = η · ω - (-1)^{‖ω‖ ‖η‖} ω · η.
\end{equation*}
Its differential is given by commuting with the product $ μ_{\cat C} ∈ \HC^1 (\cat C) $:
\begin{equation*}
dν = [μ_{\cat C}, ν].
\end{equation*}
\end{definition}

\begin{remark}
Let $ ν ∈ \HC^1 (\cat C) $. Then $ μ_{\cat C} + εν $ is an infinitesimal (curved $ A_∞ $-)deformation of $ μ_{\cat C} $ over the local ring $ B = ℂ[ε] / (ε^2) $ if and only if $ dν = 0 $. More precisely, Hochschild cohomology $ \HH^1 (\cat C) $ classifies infinitesimal deformations of $ \cat C $ up to gauge equivalence.
\end{remark}

\begin{remark}
In case $ \cat C $ is only $ ℤ/2ℤ $-graded, the Hochschild DGLA is only a $ ℤ/2ℤ $-graded DGLA and Hochschild cohomology is only a $ ℤ/2ℤ $-graded vector space.
\end{remark}

\begin{remark}
The DGLA structure on $ \HC(\cat C) $ induces (noncanonically) the structure of an $ L_∞ $-algebra on Hochschild cohomology $ \HH(\cat C) $.
\end{remark}

\begin{remark}
For ordinary algebras, which are concentrated in degree zero and have vanishing higher products, the Hochschild cohomology is typically defined without the shifts. This results in a grading difference of $ 1 $ from what we present here. For example, the center of the algebra is the classical zeroth Hochschild cohomology. In our $ A_∞ $-setting, this cohomology will rather be found in degree $ -1 $.
\end{remark}

There is also a second product on $ \HC(\cat C) $: the cup product.

\begin{definition}
The \emph{cup product} on $ \HC(\cat C) $ is given by
\begin{equation*}
\begin{split}
(\kappa ∪ \nu) (a_r,\dots a_1) := \sum_{0\le i \le j \le u \le v \le r}
  (-1)^{\maltese}
 \mu(a_r,\dots,\kappa(a_v,\dots,a_{u+1}),\dots,\nu(a_j,\dots,a_{i+1}),\dots,a_1)
\end{split}
\end{equation*}
with $\maltese = {(‖a_1‖ + … + ‖a_u‖)‖\kappa‖ + (‖a_1‖ + … + ‖a_i‖)‖\nu‖ + ‖ν‖ + 1}$.
\end{definition}

\subsection{The gentle algebra and its deformation}
\label{sec:4prelim-gtl}
In this section, we recall $ A_∞ $-gentle algebras from \cite{Bocklandt}.

\begin{definition}
A \emph{punctured surface} is a closed oriented surface $ S $ with a finite set of punctures $ M ⊂ S $. We assume that $ |M| ≥ 1 $, or $ |M| ≥ 3 $ if $ S $ is a sphere.
\end{definition}

\begin{definition}
Let $ (S, M) $ be a punctured surface. An \emph{arc} in $ S $ is a not necessarily closed curve $ γ: [0, 1] → S $. A \emph{loop} is an arc $ γ $ with $ γ(0) = γ(1) $. An \emph{arc system} $ \cA $ on $ (S, M) $ is a finite collection of arcs such that the arcs in $ \cA $ meet only at the set $ M $ of punctures. Intersections and self-intersections are not allowed.

An arc system $ \cA $ is \emph{full} if the arcs cut the surface into contractible pieces, which we call \emph{polygons}. The arc system $ \cA $ satisfies the condition \emph{[NMD]} if:
\begin{itemize}
\item Any two arcs in $ \cA $ are non-homotopic in $ S \setminus M $.
\item All loops in $ \cA $ are non-contractible in $ S \setminus M $.
\end{itemize}
\end{definition}

Let us now recall the construction of the gentle algebra $ \Gtl \cA $. It is an $ A_∞ $-category and we shall start by describing its objects, differential and product. After that, we will recall the higher products on $ \Gtl \cA $.

\begin{definition}
Let $ \cA $ be a full arc system with [NMD]. Then the \emph{gentle algebra} $ \Gtl \cA $ is the $ A_∞ $-category defines as follows:
\begin{itemize}
\item Its objects are the arcs $ a ∈ \cA $.
\item A basis for the hom space $ \Hom_{\Gtl \cA} (a, b) $ is given by the set of all angles around punctures from $ a $ to $ b $.
\item The $ ℤ/2ℤ $-grading on $ \Gtl \cA $ is given by declaring all indecomposable angles to have odd degree.
\item The differential $ μ^1 $ is zero.
\item The product $ μ^2 $ is defined as a signed version of the ordinary product of $ \Gtl \cA $:
\begin{equation*}
μ^1 ≔ 0, \quad μ^2 (α, β) ≔ (-1)^{|β|} αβ.
\end{equation*}
\end{itemize}
The angles include \emph{empty angles}, which are the \emph{identities} on the arcs. A non-empty angle is \emph{indecomposable} if it cannot be written as $ αβ $ where $ α, β $ are non-empty angles. The higher products are defined in \autoref{def:4prelim-gtl-higher}.
\end{definition}

\begin{remark}
The hom spaces of $ \Gtl \cA $ are not finite-dimensional, in contrast to what is classically called a gentle algebra.
\end{remark}

The ordinary product $ αβ $ still means concatenation of angles, and we will keep this notation. We now recall the higher products $ μ^{≥3} $ of $ \Gtl \cA $. They capture the topology of the arcs and angles. Roughly speaking, a higher product of a sequence of angles is nonzero if the sequence bounds a disk. Such a disk may either be a polygon, or a sequence of polygons stitched together, known as an immersed disk. Let us make this precise:

\begin{definition}
An \emph{immersed disk} consists of an oriented immersion of a standard polygon $ P_k $ into the surface, such that
\begin{itemize}
\item Every edge of $ P_k $ is mapped to an arc.
\item The immersion does not cover any punctures.
\end{itemize}
A sequence of angles $ α_1, …, α_k $ is a \emph{disk sequence} if there exists an immersed disk such that $ α_1, …, α_k $ are the interior angles of the immersed disk, counting in clockwise order.
\end{definition}

\begin{remark}
A disk sequence $ α_1, …, α_k $ always has length $ k ≥ 3 $ because all polygons in the arc system $ \cA $ have at least three corners. Simply speaking, there are no digons.
\end{remark}

We can now describe the higher products $ μ^{≥3} $ on $ \Gtl \cA $ as follows:

\begin{definition}
\label{def:4prelim-gtl-higher}
Let $ α_1, …, α_k $ be a disk sequence. Let $ β $ be an angle composable with $ α_1 $ in the sense that $ β α_1 ≠ 0 $, and let $ γ $ be an angle post-composable with $ α_k $ in the sense that $ α_k γ ≠ 0 $. Then we define higher products
\begin{equation*}
μ^k (β α_k, …, α_1) ≔ β, \quad μ^k (α_k, …, α_1 γ) ≔ (-1)^{|γ|} γ.
\end{equation*}
The higher products vanish on all angle sequences other than these. If $ α_1, …, α_k $ is a disk sequence, we call the sequence $ α_1, …, β α_k $ \emph{final-out} if $ β ≠ \id $ and the sequence $ α_1 γ, …, α_k $ \emph{first-out} if $ γ ≠ \id $. We call either of them \emph{all-in} if $ β $ and $ γ $ are merely identities.
\end{definition}

\begin{remark}
In \paperone, we have imposed the additional condition [NL2] on arc systems. The condition entails that $ \cA $ contains no loops and no two arcs share more than one endpoint. In particular, the [NL2] condition requires that the number of punctures $ |M| $ in the surface is at least two. We do not require the [NL2] condition in the present note.
\end{remark}

\section{Previous calculations}
\label{sec:existing}
In this section, we summarize findings on Hochschild cohomology from \paperone. We divide the section into three parts: In \autoref{sec:existing-crit}, we recall the generation criterion from \paperone. The criterion determines for a given collection of cocycles with certain prescribed shape in low adicity whether it concerns a basis of Hochschild cohomology or not. In \autoref{sec:existing-odd}, we recall a certain collection of odd cocycles. This family satisfies the requirements of the generation criterion and therefore forms a basis for odd Hochschild cohomology. In \autoref{sec:existing-sporadic}, we recall a certain collection of even cocycles, the \emph{sporadic} even cocycles.

\subsection{The generation criterion}
\label{sec:existing-crit}
In this section, we recall the two generation criteria for odd and even Hochschild cohomology of $ \Gtl \cA $ from \paperone. The two generation criteria hold for all full arc systems of punctured surfaces and are not restricted to the assumption [NL2]. In \autoref{sec:existing-odd}, we use the generation criterion for odd Hochschild cohomology to explain the basis for odd Hochschild cohomology we obtained in \paperone. In \autoref{sec:even}, we use the generation criterion for even Hochschild cohomology to construct and verify an explicit basis for even Hochschild cohomology.

We use the notation $ ℓ_m $ to denote a full turn around the puncture $ m $. The meaning is depicted in \autoref{fig:existing-odd-lm}:

\begin{definition}
Whenever $ m ∈ M $ is a puncture, the letter $ ℓ_m $ denotes the sum of full turns around the puncture $ m $, starting from every incident arc. Every loop incident at $ m $ gives rise to two contributions to $ ℓ_m $. The element $ ℓ_m $ is a formal sum of endomorphisms of the arcs incident at $ m $. In other words, $ ℓ_m $ can be interpreted as a cochain $ ℓ_m ∈ \HC(\Gtl \cA) $ of arity 0. When $ r ≥ 1 $ is a natural number, the expression $ ℓ_m^r $ denotes the $ r $-th power of $ ℓ_m $, equally consisting of endomorphisms of the arcs incident at $ m $.
\end{definition}

\begin{proposition}[\paperone]
\label{th:existing-generation-odd}
Let $ \cA $ be a full arc system with [NMD]. Let $ ν_{\id} $ and $ \{ν_{m, r}\}_{m ∈ M, r ≥ 1} $ be odd Hochschild cocycles. Assume the following conditions:
\begin{itemize}
\item $ ν_{\id}^0 = \sum_{a ∈ \cA} \id_a $.
\item $ ν_{m, r}^0 = ℓ_m^r $.
\end{itemize}
Then the collection of $ \{ν_{\id}\} ∪ \{ν_{m, r}\}_{m ∈ M, r ≥ 1} $ is a basis for $ \HH^{\odd} (\Gtl \cA) $.
\end{proposition}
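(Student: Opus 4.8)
The plan is to study the single linear map given by extracting the arity-zero part of a cocycle, and to show it is an isomorphism from $\HH^{\odd}(\Gtl\cA)$ onto the span of the prescribed leading terms. Concretely, set $\Phi([\nu]) = \nu^0$, the arity-$0$ component of $\nu$, viewed as an odd element of the arity-$0$ piece $\prod_{a\in\cA}\Hom(a,a)$. First I would check that $\Phi$ is well defined on cohomology. A coboundary has the form $d\eta = [\mu_{\Gtl\cA},\eta]$, and its arity-$0$ component receives contributions only from terms $\mu^j\cdot\eta^i$ and $\eta^i\cdot\mu^j$ with $i+j-1=0$, forcing $j=1$; since $\mu^1 = 0$ for the gentle algebra, every coboundary has vanishing arity-$0$ part, so $\Phi$ descends to $\HH^{\odd}(\Gtl\cA)$. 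The whole proof then reduces to proving that $\Phi$ is injective with image exactly $\vspan\big(\{\textstyle\sum_{a}\id_a\}\cup\{\ell_m^r\}\big)$.

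Then linear independence of the claimed basis is the easy half. The prescribed values $\nu_{\id}^0 = \sum_a\id_a$ and $\nu_{m,r}^0 = \ell_m^r$ are manifestly linearly independent as formal sums of angles: the identities are the empty angles, while $\ell_m^r$ is an $r$-fold full turn supported at the puncture $m$, so turns at distinct punctures and distinct winding numbers $r$ occupy distinct angle basis vectors disjoint from the identities. Because $\Phi$ is well defined, any nontrivial coboundary relation among $\nu_{\id}$ and the $\nu_{m,r}$ would map under $\Phi$ to a nontrivial linear relation among these arity-$0$ vectors, which is impossible. Hence $[\nu_{\id}]$ and the $[\nu_{m,r}]$ are linearly independent in $\HH^{\odd}(\Gtl\cA)$.

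For spanning and injectivity I would run the spectral sequence of the decreasing filtration of $\HC(\Gtl\cA)$ by arity. Since $\mu^1 = 0$, the associated-graded differential vanishes and the first nonzero differential is $d_1 = [\mu^2,-]$, which raises arity by one; consequently $E_2$ is the Hochschild cohomology of the underlying ordinary associative algebra $(\Gtl\cA,\mu^2)$, and the higher differentials $d_r = [\mu^{r+1},-]$ encode the disk products $\mu^{\ge 3}$. The substantive step is the combinatorial computation of the odd part of this ordinary Hochschild cohomology directly from the angle/disk description: I would show that in arity $0$ the odd classes are precisely $\sum_a\id_a$ together with the full-turn powers $\ell_m^r$ (the graded-central odd endomorphisms), and that every odd class of positive arity either is killed by some higher differential or does not survive, so that $E_\infty^{\odd}$ is concentrated in arity $0$ with exactly the stated basis. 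This yields simultaneously that $\Phi$ is injective (a positive-arity odd cocycle with vanishing leading term is exact) and that its image is contained in $\vspan(\{\sum_a\id_a\}\cup\{\ell_m^r\})$.

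I expect the main obstacle to be exactly this last computation: controlling the odd Hochschild cohomology of the ordinary gentle algebra and the behavior of the higher differentials, uniformly over all full arc systems with [NMD], including the single-puncture case excluded by [NL2]. The delicate point is to rule out spurious odd classes in positive arity (equivalently, to establish the acyclicity needed for injectivity of $\Phi$) without assuming that two arcs share at most one endpoint; here I would argue locally, disk by disk, using that each disk sequence has length at least three, and track the winding contributions at each puncture so that the only surviving graded-central data are the powers $\ell_m^r$ and the global unit $\sum_a\id_a$. Once $\Phi$ is shown to be an isomorphism onto the span of the leading terms, combining it with the independence of the prescribed leading terms gives that $\{\nu_{\id}\}\cup\{\nu_{m,r}\}$ is a basis.
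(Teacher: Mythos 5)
Your reduction of the proposition to the single map $\Phi([\nu])=\nu^0$ is essentially the route the paper takes: the claim that $\Phi$ is well defined and injective on odd cohomology is precisely the first half of \autoref{th:existing-crit-gauging} (an odd cocycle with vanishing $0$-adic part is exact), which the paper imports from \paperone\ rather than reproving, and the identification of the image of $\Phi$ with $\vspan\big(\{\sum_a \id_a\}\cup\{\ell_m^r\}\big)$ amounts to computing which arity-zero elements satisfy the arity-one cocycle condition $\mu^2(\nu^0,\alpha)\pm\mu^2(\alpha,\nu^0)=0$, i.e.\ the relevant graded centre. Your well-definedness check (only $j=1$ contributes to the arity-zero part of a coboundary, and $\mu^1=0$) and the linear-independence half are correct. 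Where you diverge is in proposing to establish the injectivity/acyclicity statement via the spectral sequence of the arity filtration; this is a legitimate alternative packaging of the same content, but two caveats apply. First, the filtration by arity is a complete decreasing filtration on an infinite product, so the spectral sequence converges only conditionally and this needs to be addressed before concluding anything from $E_\infty$. Second, the assertion that $E_\infty^{\odd}$ is concentrated in arity zero \emph{is} the hard lemma in disguise, and you defer it to an unperformed disk-by-disk computation — exactly the step that must work uniformly without [NL2], including loops and arcs sharing both endpoints. So the proposal is a correct skeleton coinciding with the paper's, with the load-bearing step left open in both treatments: the paper discharges it by citation to \paperone, while your spectral-sequence route would still require carrying out that computation.
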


In \autoref{def:existing-generation-sporadic}, we fix notation for a certain class $ \Sporadic $ of even Hochschild cocycles which merely “scale angles”. The idea is that the quotient $ \Sporadic / [\id, -] $ is the 1-adic part of Hochschild cohomology which merely “scales angles”. The precise definition is as follows:

\begin{definition}
\label{def:existing-generation-sporadic}
Let $ \cA $ be a full arc system with [NMD]. Denote by $ \Sporadic ⊂ \HC^{\even} (\Gtl \cA) $ the space of all even cochains $ ν $ such that $ ν^{≠1} = 0 $, $ dν = 0 $, and $ ν(α) = λ_α α $ for some scalar $ λ_α $ for every indecomposable angle $ α $. Denote by $ [\id, -] ⊂ \Sporadic $ the subspace spanned by all Gerstenhaber commutators $ [\id_a, -] ∈ \Sporadic $ ranging over $ a ∈ \cA $.
\end{definition}

\begin{proposition}[\paperone]
\label{th:existing-generation-even}
Let $ \cA $ be a full arc system with [NMD]. Let $ \{ν_P\}_{P ∈ \sporadic} $ be a collection of even Hochschild cocycles, indexed by some set $ \sporadic $. Assume the following conditions:
\begin{itemize}
\item $ ν_P ∈ \Sporadic $.
\item The collection $ \{ν_P\}_{P ∈ \sporadic} $ is a basis for $ \Sporadic / [\id, -] $.
\end{itemize}
Let $ \{ν_{m, r}\}_{m ∈ M, r ≥ 1} $ be another collection of even Hochschild cocycles. Assume the following conditions:
\begin{itemize}
\item $ ν_{m, r}^0 = 0 $,
\item $ ν_{m, r}^1 (α) = ℓ_m^r α $ for indecomposable angles $ α $ winding around $ m $.
\item $ ν_{m, r}^1 (α) = 0 $ for indecomposable angles $ α $ not winding around $ m $.
\end{itemize}
Then the two collections $ \{ν_P\}_{P ∈ \sporadic} $ and $ \{ν_{m, r}\}_{m ∈ M, r ≥ 1} $ together form a basis for $ \HH^{\even} (\Gtl \cA) $.
\end{proposition}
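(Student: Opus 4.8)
The plan is to deduce the even criterion from the adicity filtration on $\HC(\Gtl\cA)$ together with a leading-term comparison, in close parallel to the odd case of \autoref{th:existing-generation-odd}, the essential difference being that the decisive information now sits in arity $1$ (and in the sporadic scaling piece) rather than in arity $0$. First I would equip $\HC^{\even}(\Gtl\cA)$ with the decreasing filtration $F^p$ by adicity, where adicity measures the number of indecomposable factors by which a cochain raises the length of its argument, so that the Hochschild differential $d = [\mu_{\cat C}, -]$ is filtered and induces a leading differential $d_0$ on the associated graded. Because $\HC$ is defined as a product over arities it is complete for this filtration, a fact I will use at the end to make sense of infinite sums over $r$.

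The technical heart, and the step I expect to be the main obstacle, is the computation of $\H^{\even}(\operatorname{gr}\HC, d_0)$ and the verification that the associated spectral sequence degenerates. Here the combinatorics of immersed disks around the punctures enters: one must show that in each adicity degree the closed-modulo-exact leading classes are exhausted by (i) the arity-$1$ diagonal scaling cocycles, that is, a copy of $\Sporadic/[\id,-]$ from \autoref{def:existing-generation-sporadic} sitting in adicity $0$, and (ii) for each puncture $m \in M$ and each $r \ge 1$ a single class whose arity-$1$ leading term is $\alpha \mapsto \ell_m^r\alpha$ on angles winding around $m$ and zero otherwise, sitting in the adicity corresponding to $r$ full turns. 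This is exactly the even Hochschild computation carried out in \paperone; ruling out spurious classes in higher adicity and checking degeneration (so that these leading classes genuinely lift to $\HH^{\even}$) is where the lengthy surface-combinatorial analysis resides.

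Granting that computation, the rest is a formal leading-term argument. For linear independence, I would take any convergent combination $\sum_P c_P \nu_P + \sum_{m,r} d_{m,r}\nu_{m,r}$ that represents $0$ in $\HH^{\even}$ and inspect the associated graded: its adicity-$0$ component involves the $\nu_P$ alone and must vanish in $E_1$, which forces $[\sum_P c_P\nu_P]=0$ in $\Sporadic/[\id,-]$ and hence $c_P=0$ by the basis hypothesis; passing to the successive adicity levels, the remaining leading terms $\alpha\mapsto\ell_m^r\alpha$ are linearly independent, since distinct punctures $m$ are supported on disjoint sets of angles (every indecomposable angle winds around a single puncture) and distinct powers $r$ raise the length by different amounts, so every $d_{m,r}=0$. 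For spanning, I would start from an arbitrary even cocycle $\omega$, read off its adicity-$0$ leading class in $\Sporadic/[\id,-]$, subtract the matching combination of the $\nu_P$ up to an exact term from $[\id,-]$, and then inductively cancel the successive $(m,r)$-leading terms by subtracting multiples of the $\nu_{m,r}$; completeness of $\HC$ guarantees that this possibly infinite correction converges. Since the hypotheses pin the leading terms of $\{\nu_P\}$ and $\{\nu_{m,r}\}$ precisely to the basis of $\operatorname{gr}\HH^{\even} = \H^{\even}(\operatorname{gr}\HC, d_0)$ found above, the two arguments together show the collection is both independent and spanning, hence a basis of $\HH^{\even}(\Gtl\cA)$.
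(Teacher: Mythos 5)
Your overall strategy --- detect even classes by leading terms in low arity and conclude by a formal comparison argument --- is the right shape, but as written the proposal concentrates all of the content into a single step that it then declines to carry out. The assertion that the closed-modulo-exact leading classes ``are exhausted by'' a copy of $ \Sporadic/[\id,-] $ together with one class $ \alpha \mapsto \ell_m^r \alpha $ for each pair $ (m,r) $ is not an input to the proposition: it \emph{is} the proposition, restated on the associated graded. Deferring it to ``the even Hochschild computation carried out in \paperone'' therefore leaves the criterion unproved at exactly the decisive point. The same issue affects your independence step: you check that the leading terms are linearly independent among themselves, but what has to be excluded is that a nontrivial combination of them equals the $1$-adic component of a coboundary, $ (d\eta)^1(\alpha) = \mu^2(\eta^0,\alpha) \pm \mu^2(\alpha,\eta^0) $; that, too, is part of the computation you defer.

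The route the paper indicates is more elementary and bypasses the spectral sequence entirely: \autoref{th:existing-crit-gauging} states outright that a closed even cochain with vanishing $1$-adic component is exact, so $ \HH^{\even}(\Gtl\cA) $ injects into the space of $1$-adic components of cocycles modulo $1$-adic components of coboundaries. Granting that lemma, both spanning and independence reduce to a concrete classification of which derivation-type $1$-adic components extend to cocycles and which are gauge-trivial; no degeneration statement for a filtration is needed, and your degeneration claim is in substance equivalent to that lemma, so introducing the filtration buys nothing. Two smaller points. First, the paper uses ``adicity'' for arity (the $k$ in $ \nu^k $), not for the length-raising degree you filter by; your filtration is a sensible one but should not be given that name. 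Second, separating the $ (m,r) $-leading terms by ``amount of length raised'' is unreliable, since $ \ell_m^r $ and $ \ell_{m'}^{r'} $ can consist of the same number of indecomposable angles for distinct punctures; the disjointness of supports that you also mention is the argument that actually works there.
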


The generation criteria build on the following technical lemma:

\begin{lemma}[\paperone]
\label{th:existing-crit-gauging}
Let $ \cA $ be a full arc system with [NMD]. Then:
\begin{itemize}
\item A cochain $ ν ∈ \HC^{\odd} (\Gtl \cA) $ with $ dν = 0 $ and $ ν^0 = 0 $ satisfies $ ν ∈ \Im(d) $.
\item A cochain $ ν ∈ \HC^{\even} (\Gtl \cA) $ with $ dν = 0 $ and $ ν^1 = 0 $ satisfies $ ν ∈ \Im(d) $.
\end{itemize}
\end{lemma}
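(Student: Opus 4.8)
The plan is to exploit the one feature that distinguishes this Hochschild complex: since $\mu^1 = 0$ for $\Gtl \cA$, the differential $d = [\mu, -]$ (with $\mu = \mu_{\Gtl \cA}$) strictly raises arity. Inserting some $\mu^k$ with $k \ge 2$ into a cochain of arity $p$, or inserting such a cochain into $\mu^k$, always produces arity $p + k - 1 > p$. I would therefore filter $\HC(\Gtl \cA)$ by the minimal arity of a cochain; this descending filtration is complete and exhaustive (the complex is a product over arities), it is preserved by $d$, and the induced differential on the associated graded is $d_0 := [\mu^2, -]$, which raises arity by exactly one, while the contributions $[\mu^{\ge 3}, -]$ raise it by two or more. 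Associativity of $\mu^2$ (the arity-$3$ relation with $\mu^1 = 0$) gives $d_0^2 = 0$.

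The core is a gauging induction on the minimal arity $n_0$ of a closed cochain $\nu$. Comparing lowest-arity components of $d\nu = 0$ yields $d_0 \nu^{n_0} = 0$, so the leading component is $d_0$-closed. If I can solve $\nu^{n_0} = d_0 \zeta$ with $\zeta$ of arity $n_0 - 1$, then $\nu - d\zeta$ is closed, has the same vanishing low-arity part, but strictly larger minimal arity; iterating and summing the corrections produces $\xi$ with $d\xi = \nu$, the sum converging in the arity-complete product $\HC$. Since $\nu^0 = 0$ (odd case) resp.\ $\nu^1 = 0$ (even case) guarantees $n_0 \ge 1$ resp.\ $n_0 \ge 2$, the statement reduces to acyclicity of the leading complex in the appropriate range: arity $\ge 1$ in odd parity, arity $\ge 2$ in even parity.

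To prove this I would identify $d_0$ with the Hochschild cochain differential of the \emph{associative} algebra $B := (\Gtl \cA, \mu^2)$, so that $H^\bullet(\HC, d_0) = \HH^\bullet(B)$. The product of $B$ is signed concatenation of angles, and $B$ is the completed path algebra of the ribbon quiver of $\cA$: at each puncture the indecomposable angles wind freely around an oriented cycle, while compositions crossing an arc into a different puncture vanish. Such an algebra has Hochschild-cohomological dimension essentially one, which I would make explicit by a contracting homotopy $h$ on the normalized complex that peels off the leading indecomposable letter of an angle, satisfying $d_0 h + h d_0 = \id$ outside the bottom arity. The degree formula $\|\nu\| = \|\nu(a_k, \dots, a_1)\| - \|a_k\| - \dots - \|a_1\|$, together with the convention that indecomposable angles are odd, pins the bottom of the leading cohomology at arity $0$ in odd parity and at arity $1$ in even parity — exactly the range excluded by the two hypotheses.

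The step I expect to be the genuine obstacle is this homotopy, for two reasons. First, $B$ is not globally hereditary: an arc meeting two punctures, or a loop meeting one puncture twice, glues two oriented cycles at a single vertex, so the naive peeling homotopy must be corrected there without reintroducing cohomology in the forbidden arities. Second, the even case has an extra wrinkle at arity $0$: when a puncture carries an odd number of incident angles, the full turn $\ell_m$ is an even, central, hence $d_0$-closed arity-$0$ cochain lying below the even bottom, which cannot be removed by $d_0$ for lack of an arity $-1$. Here I would invoke the higher products: with $\nu^1 = 0$, the arity-$2$ part of the cocycle condition reduces to $[\mu^3, \nu^0] = 0$, and showing that $[\mu^3, -]$ is injective on even central elements forces $\nu^0 = 0$, after which the arity-$\ge 2$ induction applies. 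Once the homotopy and this injectivity are established, the remaining bookkeeping — signs in the homotopy identity and convergence of $\xi = \sum \zeta$ in the product topology — is routine.
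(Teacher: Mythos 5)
This lemma is imported from \paperone\ and the present paper contains no proof of it, so there is nothing internal to compare your argument against; I can only assess the proposal on its own terms. Your skeleton is the natural one and is sound as far as it goes: filter $\HC(\Gtl\cA)$ by arity, use $\mu^1=0$ to see that $d$ raises arity and that the associated graded differential is $d_0=[\mu^2,-]$, then gauge away the leading term of a closed cochain and iterate, with convergence guaranteed because $\HC$ is a product over arities. Your observation that $(d\xi)^0=0$ always, so that the even case secretly requires $\nu^0=0$ to be \emph{derived} from $d\nu=0$ and $\nu^1=0$ via the higher products, is a genuinely good catch.

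The gap is the step you yourself flag as the obstacle, and it is worse than a missing computation: the claim that $d_0$ admits a contracting homotopy above the bottom arity amounts to asserting that the associative truncation $B=(\Gtl\cA,\mu^2)$ has Hochschild cohomology concentrated in arities $0$ and $1$, and this is false in general. $B$ is a quadratic monomial (gentle) category whose relations are the compositions crossing an arc to its other end; the Anick/Bardzell chains computing $\HH^n(B)$ are therefore the zigzag paths of the surface, which have unbounded length, and parallel angles to long zigzags exist in abundance (already for the once-punctured torus, the very case this paper is written to cover). So the $E_1$ page of your filtration does not vanish in arities $\ge 2$; the surviving classes are killed only by the higher differentials of the spectral sequence, i.e.\ by $[\mu^{\ge 3},-]$. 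You invoke exactly this mechanism at arity $0$ (using $(d\nu)^2=[\mu^3,\nu^0]$ to force $\nu^0=0$, and even there you should use all of $[\mu^{\ge 3},\nu^0]$, since there need not be any triangle at $m$), but your induction step at arity $n_0\ge 2$ only allows a correction $\zeta$ of arity $n_0-1$ with $\nu^{n_0}=d_0\zeta$, which is exactly the $E_1$-vanishing you cannot have. A correct argument must let the gauge correction spread over several arities and must pair $\mu^{\ge 3}$-insertions against higher components throughout --- which is precisely the combinatorial mechanism (disk sequences, parking-garage cancellations) that the rest of this paper is forced to deploy. As written, the proposal reduces the lemma to a vanishing statement that does not hold, so the reduction does not constitute a proof.
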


\subsection{Odd Hochschild cocycles}
\label{sec:existing-odd}
In this section, we recall odd Hochschild cohomology of gentle algebras from \paperone. The idea to find Hochschild cocycles of $ \Gtl \cA $ is to trace Seidel's explanation \cite{Seidel-relative} on deformations of Fukaya categories. Seidel's proposal is to define the higher products relative to a divisor. In \paperone, we translated this idea to gentle algebras of punctured surfaces. In particular, we describe in \paperone\ a basis of the odd Hochschild cohomology. In the present section, we recall this basis.

\begin{example}
We will define the odd Hochschild cocycles $ ν_{m, r} $ by their behavior on orbigons of type $ (m, r) $. If $ r = 1 $, then orbigons of type $ (m, r) $ can be interpreted in a more standard way. In fact, they are the same as immersed disks covering the puncture $ m $ precisely once, and no other punctures apart from $ m $. More precisely, we may say the sequence $ α_1, …, α_k $ of angles is an immersed disk covering the puncture $ m ∈ M $ if there is an immersion of the standard polygon $ P_k $ into $ S $ such that every edge is mapped to an arc and the immersion covers a single puncture, and only once, namely $ m $. An example of an immersed disk $ α_1, …, α_k $ covering a puncture is depicted in \autoref{fig:existing-odd-covering}.
\end{example}

This basis is best recalled as follows:

\begin{definition}
\label{def:existing-odd-def}
Let $ m ∈ M $ be a puncture and $ r ≥ 1 $ a natural number. Then the Hochschild cocycle $ ν_{m, r} $ is defined by
\begin{itemize}
\item The 0-adic component $ ν^0 $ is $ ℓ_m^r $.
\item The 1-adic component $ ν^1 $ vanishes.
\item The 2-adic component $ ν^2 $ vanishes.
\item Assume that $ α_1, …, α_i^{(1)}, ℓ_m^r, α_i^{(2)}, …, α_k $ is a disk sequence. Put $ α_i = α^{(2)} α^{(1)} $. Let $ β $ be an angle composable with $ α_1 $ in the sense that $ β α_1 ≠ 0 $, and let $ γ $ be an angle post-composable with $ α_k $ in the sense that $ α_k γ ≠ 0 $. Then put
\begin{equation*}
ν^k (β α_k, …, α_1) = β, \quad ν^k (α_k, …, α_1 γ) = (-1)^{|γ|} γ.
\end{equation*}
The higher products $ ν^{≥3} $ vanish on all angle sequences other than these.
\end{itemize}
The single Hochschild cocycle $ ν_{\id} $ is given by $ ν_{\id}^0 = \sum_{a ∈ \cA} \id_a $ and $ ν_{\id}^{>0} = 0 $.
\end{definition}

\begin{figure}
\centering
\begin{subfigure}{0.3\linewidth}
\centering
\begin{tikzpicture}
\path[draw] (0, 0) -- ++(right:1) coordinate[pos=0.3] (1-start) coordinate[pos=0.7] (6-end) -- ++(60:1) coordinate[pos=0.3] (6-start) coordinate[pos=0.7] (5-end) -- ++(120:1) coordinate[pos=0.3] (5-start) coordinate[pos=0.7] (4-end) -- ++(left:1) coordinate[pos=0.3] (4-start) coordinate[pos=0.7] (3-end) -- ++(240:1) coordinate[pos=0.3] (3-start) coordinate[pos=0.7] (2-end) -- ++(300:1) coordinate[pos=0.3] (2-start) coordinate[pos=0.7] (1-end);
\foreach \i in {1, 2, 3, 4, 5, 6} {
\path[draw, ->, bend right=60] (\i-start) to (\i-end);
\path (\i-start) -- (\i-end) node[midway] {$ \i $};
};
\path[fill] (0.5, 0.866) circle[radius=0.05];
\end{tikzpicture}
\caption{Disk covering a puncture}
\label{fig:existing-odd-covering}
\end{subfigure}
\begin{subfigure}{0.3\linewidth}
\centering
\begin{tikzpicture}
\path[draw] (0, 0) -- ++(up:1);
\path[draw] (0, 0) -- ++(left:1);
\path[draw] (0, 0) -- ++(right:1);
\path[draw] (0, 0) -- ++(down:1);
\path[draw, <-] (90:0.2) arc(90:-270:0.2);
\path[draw, <-] (180:0.4) arc(180:-180:0.4);
\path[draw, <-] (270:0.6) arc(270:-90:0.6);
\path[draw, <-] (0:0.8) arc(0:-360:0.8);
\path (0, 0) node {\small $ m $};
\end{tikzpicture}
\caption{The element $ ℓ_m $}
\label{fig:existing-odd-lm}
\end{subfigure}
\caption{}
\end{figure}

In terms of orbigons, the assumption in \autoref{def:existing-odd-def} reads that $ α_1, …, α_k $ is the reduced sequence of an orbigon of type $ (m, r) $.

\begin{lemma}[\paperone]
The cochains $ ν_{\id} $ and $ ν_{m, r} $ are Hochschild cocycles.
\end{lemma}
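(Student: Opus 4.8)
The plan is to verify the cocycle condition $d\nu = [\mu_{\cat C}, \nu] = 0$ directly for each of the two families. Both $\nu_{\id}$ and $\nu_{m,r}$ are odd, so $\Vert\nu\Vert$ is odd, and since $\Vert\mu_{\cat C}\Vert = 1$ the Gerstenhaber bracket collapses to $d\nu = \mu_{\cat C}\cdot\nu + \nu\cdot\mu_{\cat C}$; I would show this expression vanishes on every sequence of composable angles. The cochain $\nu_{\id}$ is the easy case. It has arity $0$, so $\nu_{\id}\cdot\mu_{\cat C} = 0$ for lack of an input slot, while $\mu_{\cat C}\cdot\nu_{\id}$ merely inserts the total identity $\sum_a \id_a$ into $\mu_{\cat C}$. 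By strict unitality the only surviving contributions are $\mu^2(a, \id)$ and $\mu^2(\id, a)$, which carry opposite Koszul signs and cancel, whereas $\mu^{\geq 3}(\ldots, \id, \ldots) = 0$. Hence $d\nu_{\id} = 0$.

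The substantial case is $\nu_{m,r}$. Here I would first catalogue the nonzero terms of both Gerstenhaber products on a fixed input $(a_k, \ldots, a_1)$. A term of $\nu\cdot\mu_{\cat C}$ has the form $\nu(\ldots, \mu_{\cat C}(a_j, \ldots, a_{i+1}), \ldots)$, which is nonzero only when the inner $\mu_{\cat C}$ — either a concatenation $\mu^2$ or a disk-collapsing $\mu^{\geq 3}$ — emits an angle that turns the whole argument into a valid $\nu_{m,r}$-input, namely a disk sequence with $\ell_m^r$ sewn in, in final-out or first-out form (\autoref{def:existing-odd-def}). Dually, a term of $\mu_{\cat C}\cdot\nu$ has the form $\mu_{\cat C}(\ldots, \nu(a_j, \ldots, a_{i+1}), \ldots)$, nonzero only when the inner $\nu_{m,r}$ emits an angle $\beta$ or $\gamma$ that $\mu_{\cat C}$ then absorbs. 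The key structural observation is that both kinds of term encode the same combinatorial datum: a decomposition of one immersed configuration covering the puncture $m$ with total multiplicity $r$ into one \emph{$\nu$-polygon} carrying the loop $\ell_m^r$, glued along a shared edge to one ordinary \emph{$\mu$-polygon}.

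I would then argue the cancellation geometrically, mirroring the proof that the products $\mu^{\bullet}$ of $\Gtl\cA$ satisfy the $A_\infty$ relations. Each such glued configuration is an endpoint of a one-dimensional family of immersed disks obtained by varying the gluing along the shared edge, and the two ends of the family realise precisely one $\nu\cdot\mu_{\cat C}$ term and one $\mu_{\cat C}\cdot\nu$ term. Pairing the ends of each family and checking that the signs prescribed by the Gerstenhaber product are opposite on the two ends shows that all terms cancel in pairs, so $\mu_{\cat C}\cdot\nu + \nu\cdot\mu_{\cat C} = 0$. In essence this is the $A_\infty$ relation for $\Gtl\cA$ with one of the two instances of $\mu$ replaced by its $\ell_m^r$-decorated analogue.

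The main obstacle is not this generic pairing but the degenerate boundary configurations together with the signs. Concretely I expect the delicate points to be: the interaction of the inserted $\ell_m^r$ with a $\mu^2$-concatenation, where an angle may split as $\alpha = \alpha^{(2)}\alpha^{(1)}$ and indecomposability can be created or destroyed; the endpoint cases where the outgoing angle $\beta$ or $\gamma$ degenerates to an identity, collapsing a final-out or first-out sequence; and the configurations special to a single puncture or to loops at $m$, each loop contributing two turns to $\ell_m$. Confirming that every such degenerate term either cancels a matching one or vanishes outright, with the correct sign, is where the long calculation resides; it is dispatched by the same case analysis on angle and disk shapes as in \paperone.
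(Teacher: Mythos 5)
Your proposal is correct and follows essentially the route the paper takes: the present note does not reprove this lemma but imports it from \paperone, and the argument there is exactly the pairwise cancellation of $\mu(\ldots,\nu(\ldots),\ldots)$ against $\nu(\ldots,\mu(\ldots),\ldots)$ terms via a case analysis on glued disk configurations containing $\ell_m^r$ (consistent with the paper's later remark that, in contrast to the even case, the odd cancellations always come in pairs), and your treatment of $\nu_{\id}$ via strict unitality is the standard one. The only small imprecision is that your catalogue of $\mu\cdot\nu$ terms should also explicitly include insertions of the arity-zero component $\nu^0=\ell_m^r$ itself into a higher product, not only outputs $\beta$ or $\gamma$ of $\nu^{\geq 3}$, though your list of delicate boundary cases shows you are aware of these configurations.
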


\begin{theorem}[\paperone]
The collection of $ ν_{\id} $ and $ (ν_{m, r})_{m ∈ M, r ≥ 1} $ provides a basis for $ \HH^{\odd} (\Gtl \cA) $.
\end{theorem}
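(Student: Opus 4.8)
The plan is to combine the two ingredients that have just been assembled: the fact that the proposed cochains are cocycles, and the generation criterion for odd Hochschild cohomology. The criterion \autoref{th:existing-generation-odd} has been engineered precisely so that verifying a candidate family to be a basis reduces to two routine checks — that each member is an odd Hochschild cocycle, and that its $0$-adic component has the prescribed normal form. Once both are in place, the criterion delivers the conclusion with no further argument, so the strategy is simply to discharge these two hypotheses and invoke the proposition.

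First, I would appeal to the lemma immediately preceding this theorem, which asserts that the cochains $ν_{\id}$ and $ν_{m,r}$ are Hochschild cocycles. Since they are odd by construction, they represent well-defined classes in $\HH^{\odd}(\Gtl\cA)$, so the collection is at least a legitimate family of cohomology classes.

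Second, I would read off the $0$-adic components directly from \autoref{def:existing-odd-def}. There it is stipulated that $ν_{\id}^0 = \sum_{a ∈ \cA} \id_a$ and that the $0$-adic component of each $ν_{m,r}$ equals $ℓ_m^r$. These are exactly the two normalization hypotheses demanded by \autoref{th:existing-generation-odd}, so the matching is immediate with nothing to compute.

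Finally, with both hypotheses verified, I would apply \autoref{th:existing-generation-odd} to conclude that $\{ν_{\id}\} ∪ \{ν_{m,r}\}_{m ∈ M, r ≥ 1}$ is a basis for $\HH^{\odd}(\Gtl\cA)$. I do not expect a genuine obstacle within the present argument: all of the substance is packaged inside the generation criterion and the cocycle lemma, both imported from \paperone. The only point requiring care is to confirm that the $0$-adic parts match the exact shape the criterion requires, rather than merely agreeing up to a coboundary — but this is transparent from the definition, so the proof reduces to citing the two prior results in sequence.
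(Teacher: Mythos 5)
Your proposal is correct and follows exactly the paper's argument: the paper's proof likewise consists of invoking the generation criterion \autoref{th:existing-generation-odd} and observing that the $0$-adic components $ν_{\id}^0 = \sum_{a ∈ \cA} \id_a$ and $ν_{m,r}^0 = ℓ_m^r$ have the required form, with the cocycle property supplied by the preceding lemma. Your version merely spells out the two hypotheses more explicitly than the paper does.
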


\begin{proof}
According to the generation criterion \autoref{th:existing-generation-odd}, a collection of cocycles is a basis if it has the right 0-ary components. This is clearly the case.
\end{proof}

\subsection{Sporadic even cocycles}
\label{sec:existing-sporadic}
In this section, we recall a first type of even Hochschild cocycles, the \emph{sporadic classes}. The idea is to select just 1-adic cochains $ ν = ν^1 $ with $ ν^1 (α) $ being a multiple of $ α $ whenever $ α $ is an indecomposable angle. More precisely, we pick a collection $ (ν_P)_{P ∈ \mathbb{P}_0} ⊂ \Sporadic $ in such a way that the requirements of \autoref{th:existing-generation-even} are satisfied.

Our starting point is the set $ \Sporadic $. Recall from \autoref{sec:existing-crit} that this set consists of all 1-adic cocycles which are of the form $ ν^1 (α) = λ_α α $ for every angle $ α $. Simply speaking, $ \Sporadic $ is the set of cocycles among the 1-adic cochains which merely scale angles. Our first step in this section is to make the cocycle condition explicit:

\begin{lemma}[\paperone]
Let $ \cA $ be a full arc system with [NMD]. Let $ ν ∈ \HC^{\even} (\Gtl \cA) $ be an even cochain such that $ ν^{≠1} = 0 $ and $ ν(α) = λ_α α $ for some scalar $ λ_α $ for every indecomposable angle $ α $. Then
\begin{equation*}
dν = 0 \quad \Longleftrightarrow \quad \text{for every polygon } α_1, …, α_k: \sum_{i = 1}^k λ_{α_i} = 0.
\end{equation*}
\end{lemma}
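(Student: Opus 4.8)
The plan is to compute $d\nu = [\mu_{\cat C}, \nu]$ directly from the Gerstenhaber bracket and to exploit the two special features of $\nu$: it is even, so every Koszul sign of the form $(-1)^{(\cdots)\|\nu\|}$ is trivial, and it is purely $1$-adic, so whenever $\mu_{\cat C}$ is inserted into $\nu$ it must fill the single slot entirely (leaving nothing to its right, so that sign is trivial too). First I would unfold the bracket to obtain, on any composable sequence,
\[
(d\nu)(a_k, \dots, a_1) \;=\; \sum_{j=1}^{k} \mu^k(a_k, \dots, \nu(a_j), \dots, a_1) \;-\; \nu\big(\mu^k(a_k, \dots, a_1)\big),
\]
with all internal signs equal to $+1$; the term $\nu(\mu^k(\cdots))$ is legitimate because $\mu^k$ outputs a single angle.

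Next I would feed the scaling behaviour of $\nu$ into this formula. Since $\nu$ rescales each angle by its weight $\lambda$, and this weight is additive over concatenation (the value of $\nu$ on a decomposable angle being the sum of the scalars of its indecomposable factors), inserting $\nu$ at the input $a_j$ multiplies the whole product by $\lambda_{a_j}$, whereas applying $\nu$ to the output multiplies it by the weight of the output angle. By linearity of $\mu^k$ the entire expression collapses to the single clean identity
\[
(d\nu)(a_k, \dots, a_1) \;=\; \Big( \sum_{j=1}^{k} \lambda_{a_j} - \lambda_{\mathrm{out}} \Big)\, \mu^k(a_k, \dots, a_1),
\]
where $\lambda_{\mathrm{out}}$ is the weight of the angle $\mu^k(a_k, \dots, a_1)$. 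Hence $d\nu = 0$ if and only if $\sum_j \lambda_{a_j} = \lambda_{\mathrm{out}}$ for every sequence on which $\mu_{\cat C}$ is nonzero; on sequences with vanishing product both sides are automatically zero, so no condition arises there.

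It then remains to read off this balance condition on each kind of nonzero product. The product $\mu^2$ is concatenation, with output $\alpha\beta$, so there the condition is the additivity $\lambda_{\alpha\beta} = \lambda_\alpha + \lambda_\beta$; this is merely the compatibility of the weighting with concatenation, which fixes $\nu$ on decomposable angles in terms of the indecomposable scalars and therefore imposes no constraint on those scalars. The higher products $\mu^{\geq 3}$ are nonzero only on disk sequences, in the all-in, first-out, or final-out shapes. For an all-in disk sequence, i.e.\ a polygon $\alpha_1, \dots, \alpha_k$, the output is an identity, whose weight is the empty sum $\lambda_{\id} = 0$, so the balance condition reads exactly $\sum_{i=1}^k \lambda_{\alpha_i} = 0$. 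For the final-out shape the output is the decorating angle $\beta$, the decorated input contributes $\lambda_{\beta\alpha_k} = \lambda_\beta + \lambda_{\alpha_k}$, and cancelling $\lambda_\beta$ using additivity again collapses the condition to $\sum_{i=1}^k \lambda_{\alpha_i} = 0$; the first-out case is symmetric. This yields the claimed equivalence.

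The routine but slightly delicate part will be the sign bookkeeping justifying that the unfolded bracket has trivial internal signs, together with the case analysis over the decorated disk shapes. The only genuinely structural inputs are that every nonzero higher product is supported on disk sequences and that an all-in disk product returns an identity of weight zero, which is what turns the abstract balance condition into the polygon condition; I do not anticipate any real obstacle beyond confirming that the first-out and final-out cases contribute nothing new once additivity of the weights is in hand.
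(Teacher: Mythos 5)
This lemma is quoted from \paperone\ and the present note gives no proof of it, so there is no in-paper argument to compare against; I can only assess your proposal on its own terms. Your computation is the right one and essentially complete: because $\|\nu\|$ is even the Gerstenhaber signs disappear, because $\nu$ is $1$-adic the only $\nu\cdot\mu$ term is $\nu(\mu^k(\cdots))$, and because $\nu$ merely rescales angles every term is a scalar multiple of $\mu^k(a_k,\dots,a_1)$, so $d\nu=0$ reduces to the balance condition $\sum_j\lambda_{a_j}=\lambda_{\mathrm{out}}$ on every nonvanishing product. The $\mu^2$ case is absorbed by additivity of the weights (which, as you implicitly use, is how $\nu$ is pinned down on decomposable angles and identities --- the hypothesis only constrains indecomposables, so this convention is genuinely needed for the backward implication), and the first-out and final-out decorations cancel against the output weight, leaving $\sum_i\lambda_{\alpha_i}=0$ in each case.

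The one point you should not gloss over is the phrase ``an all-in disk sequence, i.e.\ a polygon''. These are not the same thing: $\mu^{\geq 3}$ is supported on disk sequences coming from immersed disks, which may be several polygons of $\cA$ stitched together along interior arcs, not just single polygons. A priori, $d\nu=0$ is therefore equivalent to the vanishing of $\sum_i\lambda_{\alpha_i}$ over every immersed disk, which is a larger family of conditions than the polygon condition in the statement. The repair is exactly the additivity you already have in hand: when two polygons are glued along an arc, the angles at the two ends of that arc concatenate, so the total weight of an immersed disk's interior angles equals the sum of the weights over its constituent polygons; hence all immersed-disk sums vanish if and only if all single-polygon sums vanish (polygons themselves being the minimal disk sequences). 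With that sentence added, your argument is complete.
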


Whenever $ ν ∈ \Sporadic $, we also write $ \# ν (α) = λ_α $ for the scalar coefficient of $ ν (α) $ whenever $ α $ is an angle in $ \cA $. For instance, we have $ \# ν (α_k … α_1) = \# ν (α_k) + … + \# ν (α_1) $.

\begin{lemma}
The quotient $ \Sporadic / [\id, -] $ of sporadic cocycles by sporadic inner derivations is isomorphic to $ H_1 (S, M; ℂ) $. This space has dimension $ 2g - 1 + |M| $.
\end{lemma}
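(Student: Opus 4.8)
The plan is to reduce the statement to a small piece of cellular topology. Write $\mathbb{C}^{\cA}$, $\mathbb{C}^{\mathrm{Ang}}$ and $\mathbb{C}^{\mathrm{Pol}}$ for the vector spaces freely spanned by the arcs, the indecomposable angles and the polygons of $\cA$, with basis vectors $e_a$, $e_\alpha$, $e_p$. By the preceding lemma, a sporadic cochain $\nu$ is the same datum as a function $\alpha \mapsto \#\nu(\alpha) =: \lambda_\alpha$ on indecomposable angles subject to the relations $\sum_i \lambda_{\alpha_i} = 0$, one for each polygon with corners $\alpha_1, \dots, \alpha_k$. Encoding these relations as a linear map
\[
B \colon \mathbb{C}^{\mathrm{Ang}} \to \mathbb{C}^{\mathrm{Pol}}, \qquad B(\lambda)_p = \sum_{\alpha \text{ corner of } p} \lambda_\alpha,
\]
we get $\Sporadic = \ker B$. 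Since each indecomposable angle is a corner of a unique polygon and every polygon has at least one corner, $B$ is surjective, so $\dim \Sporadic = |\mathrm{Ang}| - |\mathrm{Pol}|$.

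First I would pin down the inner derivations. A direct computation of the Gerstenhaber bracket from the definition of $\mu^2$ shows that $[\id_a, -] := [\mu_{\Gtl \cA}, \id_a]$ is again sporadic (it is even, of arity $1$, and scales angles), with coefficient function $\#[\id_a, -](\alpha) = \delta_{a, s(\alpha)} - \delta_{a, t(\alpha)}$, where $s(\alpha), t(\alpha)$ are the arcs that the angle $\alpha \in \Hom(s(\alpha), t(\alpha))$ runs between. In other words $[\id, -] = \Im D$, where $D \colon \mathbb{C}^{\cA} \to \mathbb{C}^{\mathrm{Ang}}$ is the signed incidence (coboundary) operator of the graph $\Gamma$ whose vertices are the arcs and whose oriented edges are the indecomposable angles. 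As $\Gamma$ is connected (fullness of $\cA$ and connectedness of $S$), $\ker D = \mathbb{C} \cdot \sum_a e_a$, so $\dim \Im D = |\cA| - 1$.

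Next I would observe that $BD = 0$: traversing the boundary of a polygon, each side labelled by an arc is flanked by two corners, being the source of one and the target of the other, so the source and target contributions cancel in pairs. We therefore have a three-term complex
\[
\mathbb{C}^{\cA} \xrightarrow{\;D\;} \mathbb{C}^{\mathrm{Ang}} \xrightarrow{\;B\;} \mathbb{C}^{\mathrm{Pol}},
\]
and $\Sporadic / [\id, -]$ is exactly its cohomology in the middle. The key geometric input is that this complex computes the cohomology of the open surface $S \setminus M$: thickening the arc system exhibits $S \setminus M$ as homotopy equivalent to a CW complex whose cells, in dimensions $0, 1, 2$, are the arcs, the indecomposable angles and the polygons, with cellular coboundaries $D$ and $B$. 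Granting this, $\Sporadic / [\id, -] \cong H^1(S \setminus M)$, and Poincaré–Lefschetz duality gives $H^1(S \setminus M) \cong H_1(S, M; \mathbb{C})$, which is the asserted identification.

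Finally, the dimension. From the long exact sequence of the pair $(S, M)$, using $H_1(M) = 0$, $H_0(S, M) = 0$ and the surjection $H_0(M) \twoheadrightarrow H_0(S)$, one reads off
\[
0 \to H_1(S; \mathbb{C}) \to H_1(S, M; \mathbb{C}) \to \ker\big(H_0(M) \to H_0(S)\big) \to 0,
\]
so $\dim H_1(S, M; \mathbb{C}) = 2g + (|M| - 1) = 2g - 1 + |M|$. As an independent check this matches the combinatorial count $\dim(\Sporadic / [\id, -]) = (|\mathrm{Ang}| - |\mathrm{Pol}|) - (|\cA| - 1) = |\cA| - |\mathrm{Pol}| + 1$, where I used $|\mathrm{Ang}| = 2|\cA|$ and the Euler relation $|M| - |\cA| + |\mathrm{Pol}| = 2 - 2g$ coming from the arc-system CW structure on $S$. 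The hard part will be the geometric identification of the three-term complex with $C^*_{\mathrm{cell}}(S \setminus M)$: one must set up the cell structure and fix the orientation conventions for the angles so that the cellular coboundaries are precisely $D$ and $B$ (in particular so that $BD = 0$ holds on the nose). Everything else is linear bookkeeping.
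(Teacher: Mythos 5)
Your proof is correct, but it reaches the identification along a route dual to the paper's. The paper builds a cell structure on $S$ itself (zero-cells the polygon midpoints together with $M$, one-cells the angle-arrows from polygon midpoints to corners, two-cells one per arc) and identifies $\Sporadic$ and $[\id,-]$ directly with the degree-one cycles and boundaries of the \emph{relative} cellular complex of $(S,M)$ — no duality needed. You instead realize the three-term complex $\C^{\cA}\xrightarrow{D}\C^{\mathrm{Ang}}\xrightarrow{B}\C^{\mathrm{Pol}}$ as the cellular \emph{cochain} complex of a deformation retract of $S\setminus M$ (inscribed polygons with vertices at arc midpoints; the punctured corner-disks retract onto their boundaries), and then invoke Poincaré–Lefschetz duality $H^1(S\setminus M)\cong H_1(S,M;\C)$. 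The two cell structures are Poincaré-dual to one another, so your argument is essentially the paper's composed with duality; the extra input (duality) is standard, and your retraction argument is clean, so this is a legitimate trade. Where your write-up genuinely improves on the paper is the dimension count: since $\Im D\subseteq\ker B$, surjectivity of $B$, connectedness of the arc–angle graph $\Gamma$ (giving $\dim\ker D=1$), $|\mathrm{Ang}|=2|\cA|$ and the Euler relation $|M|-|\cA|+|\mathrm{Pol}|=2-2g$ already yield $\dim(\Sporadic/[\id,-])=|\cA|-|\mathrm{Pol}|+1=2g-1+|M|$ by pure linear algebra, with no second cell decomposition; the paper instead constructs an auxiliary decomposition with $2g$ loops and $|M|-1$ extra marked points to read off the same number. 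The one step you flag as "the hard part" — consistency of orientations so that $BD=0$ on the nose — does work out: with each angle oriented from its source arc to its target arc, consecutive corners of a polygon satisfy $t(\alpha_i)=s(\alpha_{i+1})$ cyclically, so the source and target contributions of any fixed arc cancel in pairs around the polygon, matching the paper's signed sum $\alpha_1-\alpha_2+\alpha_4-\alpha_3$ for $d(\id_a)$.
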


\begin{proof}
The proof consists of two steps. To compare $ \Sporadic/[\id, -] $ and $ H_1 (S, M; ℂ) $, we will pick a cell decomposition of the surface $ S $ and show that its degree one cocycles are $ \Sporadic $, while its degree one coboundaries are $ [\id, -] $. In the second step, we read off the dimension of this relative homology space by an alternative cell decomposition.

For the first step, let us describe the cell decomposition we put on $ S $. It is a dual decomposition to the punctures, arcs and polygons of $ \cA $. The zero-dimensional cells are the midpoints of the polygons plus the punctures $ M $. The one-dimensional cells are arrows from the midpoints of the polygons to all corners around the polygon. The surface is split into topological disks by the one-dimensional cells, one for each arc of $ \cA $. The two-dimensional cells are defined to be those disks. This cell complex is depicted in \autoref{fig:hochschild-sporadic-dual-cells}.

Regard the relative cellular chain complex formed by this cell decomposition, relative to the zero-cells $ M $. Our aim is to identify its degree-one cocycles with $ \Sporadic $ and its degree-one coboundaries with $ [\id, -] $.

Let us regard a degree-one cocycle $ η $. Such a cocycle can be written as a linear combination of arrows from the centers of the polygons to the corners. Note that the arrows are precisely in correspondence with the indecomposable angles of $ \cA $. Therefore let us write $ η = \sum_α λ_α α $ with coefficients $ λ_α ∈ ℂ $. The cocycle condition, relative to $ M $, is equivalent to requiring that the sum of the coefficients $ λ_α $ vanishes along each polygon.

What are the coboundaries? They are spanned by the coboundaries of all two-dimensional cells. Regard one two-dimensional cell given by an arc $ a ∈ \cA $. Its boundary consists of the signed sum of the four one-cells bounding it. In terms of the angle interpretation of the one-cells as angles of $ \cA $, this signed sum is precisely $ α_1 - α_2 + α_4 - α_3 $, where the angles are numbered as in \autoref{fig:hochschild-sporadic-angles}. This coboundary corresponds exactly to Hochschild coboundary $ d(\id_a) ∈ [\id, -] $. In other words, the quotient of degree-one cocycles by coboundaries precisely computes $ \Sporadic/[\id, -] $.

For the second step, we are supposed to compute the dimension of $ H_1 (S, M; ℂ) $ by choosing an easier cell decomposition. Note that the relative homology does not depend on the location of the points $ M $, as long as they are distinct. Next, recall that every closed surface of genus $ g $ can be split into a single disk by $ 2g $ non-crossing loops $ a_1, …, a_g $ and $ b_1, …, b_g $, all starting and ending at a single point $ p_1 $. The boundary of the disk is given by the sequence $ a_1, b_1, a_1^{-1}, b_1^{-1}, … $.

Now form the desired cell decomposition as follows. The zero-cells are $ p_1 $, plus $ |M| - 1 $ additional points $ p_2, …, p_{|M|} $ lying on $ a_1 $. The one-cells are the $ 2g - 1 $ arcs plus the intervals between the points on $ a_1 $. Their complement in $ S $ is a single disk. Use this disk as the single two-cell. This cell decomposition is depicted in \autoref{fig:hochschild-sporadic-easy-cells}.

We are now ready to compute the degree-one homology of the cell complex of this cell decomposition, relative to $ p $ and the $ |M| - 1 $ many points lying on $ a_1 $. In fact, all $ 2g - 1 + |M| $ arcs of the cell decomposition are cocycles, since all endpoints were chosen relative. The space of coboundaries is spanned by the boundary of the single disk. Since all arcs appear precisely twice around this disk with opposite orientation, the space of degree-one coboundaries vanishes. We conclude the relative homology $ H_1 (S, M; ℂ) $ is of dimension $ 2g - 1 + |M| $.
\end{proof}

\begin{figure}
\centering
\begin{subfigure}{0.25\linewidth}
\centering
\begin{tikzpicture}
\path[draw, gray, dashed] (0, 0) -- ++(right:1.5) coordinate (B) -- ++(up:1.5) coordinate (C) -- ++(left:1.5) coordinate (D) -- ++(down:1.5);
\path (0.75, 0.75) coordinate (M);
\path[fill] (M) circle[radius=0.05];
\path[draw, gray, dashed] (0, 0) -- ++(225:0.75);
\path[draw, gray, dashed] (B) -- ++(315:0.75);
\path[draw, gray, dashed] (C) -- ++(45:0.75);
\path[draw, gray, dashed] (D) -- ++(135:0.75);
\path[draw, ->] (M) ++(45:0.2) -- ++(45:0.7);
\path[draw, ->] (M) ++(135:0.2) -- ++(135:0.7);
\path[draw, ->] (M) ++(225:0.2) -- ++(225:0.7);
\path[draw, ->] (M) ++(315:0.2) -- ++(315:0.7);
\path[draw, ->] (0.75, 2.25) ++(315:0.2) -- ++(315:0.7);
\path[draw, ->] (0.75, 2.25) ++(225:0.2) -- ++(225:0.7);
\path[fill] (0.75, 2.25) circle[radius=0.05];
\path[draw, ->] (0.75, -0.75) ++(45:0.2) -- ++(45:0.7);
\path[draw, ->] (0.75, -0.75) ++(135:0.2) -- ++(135:0.7);
\path[fill] (0.75, -0.75) circle[radius=0.05];
\path[draw, ->] (-0.75, 0.75) ++(45:0.2) -- ++(45:0.7);
\path[draw, ->] (-0.75, 0.75) ++(315:0.2) -- ++(315:0.7);
\path[fill] (-0.75, 0.75) circle[radius=0.05];
\path[draw, ->] (2.25, 0.75) ++(135:0.2) -- ++(135:0.7);
\path[draw, ->] (2.25, 0.75) ++(225:0.2) -- ++(225:0.7);
\path[fill] (2.25, 0.75) circle[radius=0.05];
\end{tikzpicture}
\caption{Cell decomposition identifying $ S/I $ with relative homology. Arcs of $ \cA $ are drawn dashed.}
\label{fig:hochschild-sporadic-dual-cells}
\end{subfigure}
\begin{subfigure}{0.25\linewidth}
\centering
\begin{tikzpicture}
\path[draw, semithick, ->] (0, 0) -- ++(0, 1.5) node[midway, left] {$ a $} coordinate[pos=0.3] (alpha3-start) coordinate[pos=0.7] (alpha1-end);
\path[draw] (0, 1.5) -- ++(150:1) coordinate[pos=0.5] (alpha1-start);
\path[draw] (0, 1.5) -- ++(30:1) coordinate[pos=0.5] (alpha2-end);
\path[draw] (0, 0) -- ++(210:1) coordinate[pos=0.5] (alpha3-end);
\path[draw] (0, 0) -- ++(330:1) coordinate[pos=0.5] (alpha4-start);
\path[draw, bend right=65, ->] (alpha1-start) to node[near end, above] {$ α_1 $} (alpha1-end);
\path[draw, bend right=65, ->] (alpha1-end) to node[near start, above] {$ α_2 $} (alpha2-end);
\path[draw, bend right=65, ->] (alpha3-start) to node[near start, below] {$ α_3 $} (alpha3-end);
\path[draw, bend right=65, ->] (alpha4-start) to node[near end, below] {$ α_4 $} (alpha3-start);
\end{tikzpicture}
\caption{Surrounding angles}
\label{fig:hochschild-sporadic-angles}
\end{subfigure}
\begin{subfigure}{0.25\linewidth}
\centering
\begin{tikzpicture}
\path[draw] (0, 0) -- ++(right:1) -- ++(45:1) -- ++(up:1) -- ++(135:1) node[midway, above right] {$ b_1^{-1} $} -- ++(left:1) node[midway, above] {$ a_1^{-1} $} coordinate[pos=0] (p6) coordinate[pos=0.25] (p7) coordinate[pos=0.5] (p8) coordinate[pos=0.75] (p9) coordinate[pos=1] (p10) -- ++(225:1) node[midway, above left] {$ b_1 $} -- ++(down:1) node[midway, left] {$ a_1 $} coordinate[pos=0] (p1) coordinate[pos=0.25] (p2) coordinate[pos=0.5] (p3) coordinate[pos=0.75] (p4) coordinate[pos=1] (p5) -- ++(315:1);
\foreach \i in {1,...,10} \path[fill] (p\i) circle[radius=0.03];
\path (p1) node[right] {\small $ p_1 $};
\path (p2) node[right] {$ \vdots $};
\path (p4) node[right] {\small $ p_{|M|} $};
\path (p5) node[right] {\small $ p_1 $};
\end{tikzpicture}
\caption{Easy cell decomposition}
\label{fig:hochschild-sporadic-easy-cells}
\end{subfigure}
\caption{The picture illustrates two different decompositions of the surface into topological cells. While the decomposition (\subref{fig:hochschild-sporadic-dual-cells}) abstractly enumerates the sporadic classes, the alternative cell decomposition (\subref{fig:hochschild-sporadic-easy-cells}) practically computes this number.}
\end{figure}
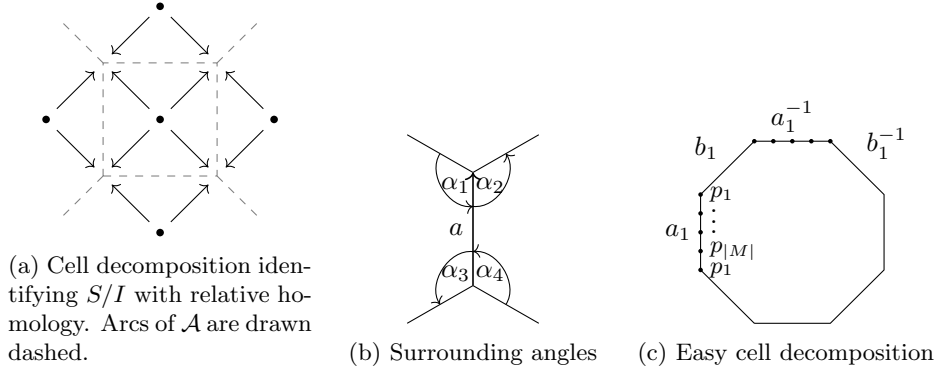

\begin{definition}
The \emph{sporadic classes} are any choice of basis representatives $ (ν_P)_{P ∈ \mathbb{P}_0} ⊂ \Sporadic $ for $ \Sporadic / [\id, -] $. The index set $ \mathbb{P}_0 $ has cardinality $ 2g - 1 + |M| $.
\end{definition}

\section{Even Hochschild cocycles}
\label{sec:even}
In this section, we construct explicit even Hochschild cocycles. Recall that we have already constructed sporadic even Hochschild cocycles $ (ν_P)_{P ∈ \mathbb{P}_0} ⊂ \Sporadic $ in \autoref{sec:existing-sporadic}. In the present section, we define a second class of Hochschild cocycles which we call the \emph{ordinary} even Hochschild cocycles. The sporadic and ordinary even Hochschild cocycles together will form a basis for the even Hochschild cohomology.

We proceed as follows: In \autoref{sec:even-construction}, we given an explicit description of Hochschild cocycles $ ν $. In \autoref{sec:even-parking}, we check that the Hochschild cochain $ d ν $ vanishes on a certain type of sequences which we call parking garage sequences. In \autoref{sec:even-other}, we check that $ d ν $ also vanishes on all other types of sequences. In total, we obtain that $ dν = 0 $. In \autoref{sec:even-summary} we construct the Hochschild cocycles $ ν_{m, r} $. We show that together with the sporadic cocycles they provide a basis for $ \HH^{\even} (\Gtl \cA) $. Finally, we comment on the Gerstenhaber bracket and cup product on Hochschild cohomology.

\subsection{Construction}
\label{sec:even-construction}
In this section we construct even Hochschild cocycles explicitly from certain input data. The input data consists of a choice of puncture $ m ∈ M $, a natural number $ r ≥ 1 $ and a choice of “weight” for every indecomposable angle around $ m $. The construction of the Hochschild cocycle associated with this data is similar to the odd case, albeit more tricky.

The basic idea of the construction is as follows: Let $ α_1, …, α_k, ℓ_m^r $ be a disk sequence. Then we define $ ν(α_k, …, α_1) $ as the identity on the first, equivalently last arc of the sequence. This idea is depicted in \autoref{fig:even-construction-basic}. This does not suffice however to make $ ν $ a cocycle. Instead, we need to give $ ν $ nonzero values on certain other special sequences and choose the scalars of these values in a clever way. It turns out there is no canonical choice for the scalars. We therefore start from the datum of a scalar value $ \# ν (α) $ for every indecomposable angle $ α $ winding around $ m $. Defining the special sequences is rather intricate and makes use of what we call turning angles and magic angles. To define magic angles, we have to define yet another auxiliary notion, the splitting angles. The structure of the section is summarized in the logical diagram \autoref{fig:even-construction-logical}.

\begin{figure}
\centering
\begin{tikzpicture}
\newcommand{\inputconnect}[2]{\path (#1.east) -- (#2.west) node[midway, sloped] {$ \implies $};}
\path (0, 0) node[left] (A) {Input scalars $ \# ν^1 (α) $};
\path (-3, -0.6) node[left] (B) {Splitting angles};
\path (0, -0.6) node[left] (C) {Magic angles};
\path (0, -1.2) node[left] (D) {Turning angles};
\path (3, -0.6) node (F) {Construction of $ ν $};
\inputconnect BC
\inputconnect AF
\inputconnect CF
\inputconnect DF
\end{tikzpicture}
\caption{Logical structure of notions}
\label{fig:even-construction-logical}
\end{figure}

\paragraph*{Arc system}
We fix a full arc system $ \cA $ which satisfies the [NMD] condition.

\paragraph*{Input scalars}
We assume the choice of a puncture $ m ∈ M $, a natural number $ r ≥ 1 $ and the choice of a scalar $ \# ν^1 (α) $ for every indecomposable angle $ α $ winding around the puncture $ m $. An example of input scalars is depicted in \autoref{fig:even-construction-inputscalars}.

\paragraph*{Splitting angles}
We introduce here precise measurement for certain angles. In terms of orbigons, it concerns angles between different ways of viewing an orbigon as a fold. We try to break down the terminology as far as possible to the more elementary notion of disk sequences.

Let $ s = α_1, …, α_k $ be an angle sequence. We regard indices $ 1 ≤ i ≤ k $ such that $ α_i $ has a decomposition $ α_i = α_i^{(2)} α_i^{(1)} $ such that $ α_1, …, α_i^{(1)}, ℓ_m^r, α_{i+1}^{(2)}, …, α_k $ is a disk sequence. We define the \emph{splitting set} $ I_s $ of $ s = α_1, …, α_k $ to be the set of such indices and decompositions:
\begin{align*}
I_s = \{(i, α_i^{(1)}, α_i^{(2)}) \running & 1 ≤ i ≤ k, \quad α_i = α_i^{(2)} α_i^{(1)}, \\
& α_1, …, α_i^{(1)}, ℓ_m^r, α_{i+1}^{(2)}, …, α_k \text{ is a disk sequence}\}.
\end{align*}
The set $ I_s $ may be empty. The more often the sequence $ α_1, …, α_k $ winds around $ m $, the larger the set $ I_s $. An example sequence $ s = α_1, …, α_6 $ in case $ r = 1 $ together with its splitting set $ I_s $ is depicted in \autoref{fig:even-construction-splittingsequence}. The elements of $ I_s $ are totally ordered by the number $ i $, or the length of $ α_i^{(1)} $ among equal indices. We capture the angle between two elements of $ I_s $ in the following terminology:
\begin{definition}
Let $ (i, α_i^{(1)}, α_i^{(2)}) ≤ (j, α_j^{(1)}, α_j^{(2)}) $ be two elements of $ I_s $. Then the \emph{splitting angle between} $ (i, α_i^{(1)}, α_i^{(2)}) $ \emph{and} $ (j, α_j^{(1)}, α_j^{(2)}) $ is
\begin{itemize}
\item if $ i < j $, then $ α $ is the angle such that $ α_i^{(2)}, α_{i+1}, …, α_{j-1}, α_j^{(1)}, α $ is a disk sequence.
\item if $ i = j $, then we set $ α = (α_i^{(1)})^{-1} α_j^{(2)} $
\end{itemize}
\end{definition}
In case $ i = j $, the splitting angle is simply speaking the difference between $ α_i^{(1)} $ and $ α_j^{(1)} $. In \autoref{fig:even-construction-splittingsequence}, we have illustrated the splitting angle in case $ i < j $. In the figure, the splitting angle is drawn dashed.

\begin{definition}
If $ I_s $ is nonempty, the \emph{splitting angle of} an element $ (i, α_i^{(1)}, α_i^{(2)}) ∈ I_s $ is the splitting angle between $ \min I_s $ and $ (i, α_i^{(1)}, α_i^{(2)}) $.
\end{definition}

In terms of orbigons, all terminology is easily described as follows: The set $ I_s $ is nonempty if $ α_1, …, α_k $ is the reduced sequence of an orbigon $ X $ of type $ (m, r) $. The set $ I_s $ is then simply the set of all possible ways the orbigon $ X $ can be obtained via folding. The minimum $ \min I_s $ is the earliest possible way to obtain $ X $ via folding.

\paragraph*{Construction of the cochain}
We are now ready to construct a cochain $ ν $ from given collection of input scalars $ \# ν^1 $. The idea is to define $ ν^1 $ as the derivation which sends an indecomposable angles $ α $ to $ \#ν^1 (α) α ℓ_m^r $. Whenever $ α_1, …, α_k $ are indecomposable angles around $ m $ such that $ α_k … α_1 ≠ 0 $, let us already now write
\begin{equation*}
\# ν^1 (α_k … α_1) = \# ν^1 (α_k) + … + \# ν^1 (α_1).
\end{equation*}
The higher component $ ν^{≥2} $ will be defined on four types of distinguished sequences. To every such distinguished sequence, we define the associated turning angle and the associated magic angle. The contribution of the sequence to $ ν^{≥2} $ is defined in terms of these two angles. The full definition reads as follows:

\begin{figure}
\centering
\begin{subfigure}{0.25\linewidth}
\centering
\begin{tikzpicture}[scale=1.5]
\path[draw] (0, 0) -- ++(45:1) coordinate[pos=0.3] (4-start) coordinate[pos=0.7] (3-end) -- ++(135:1) coordinate[pos=0.3] (3-start) coordinate[pos=0.7] (2-end) -- ++(225:1) coordinate[pos=0.3] (2-start) coordinate[pos=0.7] (1-end) -- ++(315:1) coordinate[pos=0.3] (1-start) coordinate[pos=0.7] (4-end);
\path[draw] (0, 0) -- (0, 0.707) coordinate[pos=0.4] (0) node[pos=0.7, left, shift={(0.1, 0)}] {$ a $};
\path[fill] (0, 0.707) circle[radius=0.05] node[above] {$ m $};
\path[draw, ->, bend right=22] (0) to node[at end, below] {$ α_1 $} (4-end);
\path[draw, ->, bend right=45] (1-start) to node[at end, left] {$ α_2 $} (1-end);
\path[draw, ->, bend right=45] (2-start) to node[at end, above] {} (2-end);
\path[draw, ->, bend right=45] (3-start) to node[at end, right] {} (3-end);
\path[draw, ->, bend right=22] (4-start) to node[at start, below] {$ α_k $} (0);
\path (0, -0.5) node {$ ν(α_k, …, α_1) = \id_a $};
\end{tikzpicture}
\caption{Basic idea}
\label{fig:even-construction-basic}
\end{subfigure}
\begin{subfigure}{0.25\linewidth}
\centering
\begin{tikzpicture}[scale=1.5]
\path[draw] (-1, 0) -- (1, 0) coordinate[pos=0.25] (1) coordinate[pos=0.75] (4);
\path[draw] (240:1) -- (60:1) coordinate[pos=0.25] (2) coordinate[pos=0.75] (5);
\path[draw] (300:1) -- (120:1) coordinate[pos=0.25] (3) coordinate[pos=0.75] (6);
\path[draw, bend right=30] (1) to (2);
\path[draw, bend right=30] (2) to (3);
\path[draw, bend right=30] (3) to (4);
\path[draw, bend right=30] (4) to (5);
\path[draw, bend right=30] (5) to (6);
\path[draw, bend right=30] (6) to (1);
\path[fill] (0, 0) circle[radius=0.05] node[above] {$ m $};
\path (30:0.4) node {3};
\path (90:0.4) node {2};
\path (150:0.4) node {$ -6 $};
\path (210:0.4) node {5};
\path (270:0.4) node {$ 1.5 $};
\path (330:0.4) node {1};
\end{tikzpicture}
\caption{Input scalars}
\label{fig:even-construction-inputscalars}
\end{subfigure}
\begin{subfigure}{0.4\linewidth}
\centering
\begin{tikzpicture}
\path[draw] (0, 0) -- ++(right:1) coordinate (A) coordinate[pos=0.3] (1-start) coordinate[pos=0.7] (6-end) -- ++(60:1) coordinate[pos=0.3] (6-start) coordinate[pos=0.7] (5-end) -- ++(120:1) coordinate (B) coordinate[pos=0.3] (5-start) coordinate[pos=0.7] (4-end) -- ++(left:1) coordinate (C) coordinate[pos=0.3] (4-start) coordinate[pos=0.7] (3-end) -- ++(240:1) coordinate[pos=0.3] (3-start) coordinate[pos=0.7] (2-end) -- ++(300:1) coordinate[pos=0.3] (2-start) coordinate[pos=0.7] (1-end);
\path[draw, ->, bend right=60] (1-start) to (1-end);
\path[draw, ->, bend right=60] (2-start) to (2-end);
\path[draw, ->, bend right=60] (3-start) to (3-end);
\path[draw, ->, bend right=60] (4-start) to (4-end);
\path[draw, ->, bend right=60] (5-start) to (5-end);
\path[draw, ->, bend right=60] (6-start) to (6-end);
\path (A) node[below] {\tiny $ α_2 = α_2^{(2)} α_2^{(1)} $};
\path (B) node[above] {\tiny $ α_6 = α_6^{(2)} α_6^{(1)} $};
\path (1, 0) ++(60:1) node[right] {$ α_1 $};
\path[fill] ($ (A)!0.5!(C) $) circle[radius=0.05] coordinate (m);
\path[draw] (A) -- (m) coordinate[midway] (m-end) -- (B) coordinate[midway] (m-start);
\path[draw, ->, bend right=105, looseness=2, dashed] (m-start) to (m-end);
\path (0.5, -1) node {$ I_s = \{(2, α_2^{(1)}, α_2^{(2)}), (6, α_6^{(1)}, α_6^{(2)})\} $};
\end{tikzpicture}
\caption{Splitting angle (dashed)}
\label{fig:even-construction-splittingsequence}
\end{subfigure}
\caption{Illustration of ideas and auxiliary notions}
\end{figure}
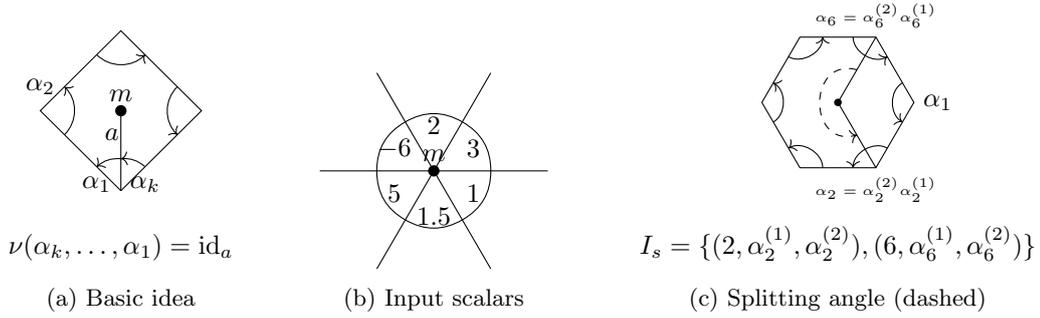

\begin{definition} 
\label{def:even-construction-def}
Let $ \cA $ be a full arc system with [NMD]. Let $ m ∈ M $, $ r ≥ 1 $ and let $ \# ν $ be a collection of input scalars around $ m $. Then the associated even Hochschild cochain $ ν $ is defined by the following rules. For every rule, we define indicate its \emph{turning angle} and its \emph{magic angle}.
\begin{itemize}
\item The 0-adic component $ ν^0 $ vanishes.
\item The 1-adic component $ ν^1 $ is defined by $ ν^1 (α) = \# ν^1 (α) α ℓ_m^r $ for $ α $ winding around $ m $.
\item An angle sequence $ α_1, …, α_k $ is \emph{end-split with turning angle $ < ℓ^r $} if there exists an angle $ α $ winding around $ m $ with $ α < ℓ^r $ such that $ α_1, …, α_k, α $ is a disk sequence. The turning angle of the sequence is the angle $ α $. The magic angle of the sequence is $ ℓ_m^r $. The contribution to $ ν $ is
\begin{equation*}
ν^k (α_k, …, α_1) = (-1)^{‖α‖} \#ν^1 (α) α^{-1} ℓ_m^r.
\end{equation*}
\item An angle sequence $ α_1 β, …, α_k $ with $ α_1 β ≠ 0 $ is \emph{old-era end-split with turning angle $ ℓ^r $} if $ α_1, …, α_k, ℓ^r $ is a disk sequence. The turning angle of the sequence is $ ℓ^r $. The magic angle of the sequence is $ ℓ_m^r $. The contribution to $ ν $ is
\begin{equation*}
ν^k (α_k, …, α_1 β) = - \# ν^1 (ℓ_m^r) β.
\end{equation*}
\item An angle sequence $ α_1 β, …, γ α_k $ with $ γ ≠ \id $ is \emph{new-era end-split with turning angle $ ℓ^r $} if $ α_1, …, α_k, ℓ^r $ is a disk sequence. The turning angle of the sequence is $ ℓ^r $. Regard the arc (more precisely, arc incidence) which is the head of $ α_k $, equivalently the tail of $ α_1 $. The magic angle of the sequence is the indecomposable angle $ n $ around $ m $ which follows this arc clockwise around $ m $. The contribution to $ ν $ is
\begin{equation*}
ν^k (γ α_k, …, α_1 β) = - \# ν^1 (n) γβ.
\end{equation*}
\item An angle sequence $ α_1, …, γ α_k $ or $ α_1 β, …, α_k $ is \emph{middle-split} if there is a $ 0 < i < k $ such that $ α_1, …, α_i, ℓ^r, α_{i+1}, …, α_k $ is a disk sequence. The turning angle of the sequence is $ ℓ^r $. The triple $ (i, α_i, α_{i+1}) $ defines an element of the splitting set $ I_s $ of the sequence $ s = α_1, …, α_{i+1} α_i, …, α_k $. The magic angle of the sequence is the splitting angle $ α $ of $ (i, α_i, α_{i+1}) $ with respect to the angle sequence $ s $. The contribution to $ ν $ is
\begin{align*}
ν^k (γ α_k, …, α_1) &= (-1)^{‖α_1‖ + … + ‖α_i‖} \# ν^1 (α) γ, \\
ν^k (α_k, …, α_1 β) &= (-1)^{‖α_1‖ + … + ‖α_i‖} \# ν^1 (α) β.
\end{align*}
\end{itemize}
\end{definition}

\begin{remark}
The higher components $ ν^{≥2} $ are well-defined: Any angle sequence falls within at most one of the four types presented in \autoref{def:even-construction-def}. Whenever it falls within one of the types, its presentation in terms of $ α_i $, $ β $ or $ γ $ is unique. In case the angle sequence is middle-split, the index $ i $ is unique. Alternatively, it is possible to circumvent this uniqueness statement. Indeed, add up contributions to $ ν $ instead, as in the odd case of \paperone.
\end{remark}

\begin{remark}
The sign rules follow a united pattern: If $ α_1, …, α_k $ is end-split with turning angle $ α < ℓ^r $, the sign $ (-1)^{‖α‖} $ is equal to $ (-1)^{‖α_1‖ + … + ‖α_k‖} $ since $ α_1, …, α_k, α $ is a disk sequence and reduced degrees in a disk sequence add up to even parity. If $ α_1, …, α_k $ is (old-era or new-era) end-split with turning angle $ ℓ^r $, the sign $ -1 $ is equal to $ (-1)^{‖α_1‖ + … + ‖α_k‖} $. More generally, the sign consumes precisely the angles between the minimum element of $ I_s $ and the split actually taken by the sequence.
\end{remark}

\begin{remark}
The rules for middle-split and end-split $ ν $ are analogous: They yield exactly the same result, except that the end-split rule for magic angle $ < ℓ^r $ does not allow for additional $ β $ and $ γ $ at the front and at the back. For example, even the signs agree, since $ ‖ℓ_m^r‖ $ is odd. Let us explain why we distinguish the two rules. The first rule yields $ α_k^{-1} ν^1 (α_k) α_k^{-1} $. This angle always winds around $ m $. Evaluate
\begin{equation*}
0 = (dν) (γ, α_k, …, α_1) = μ^2 (γ, ν(α_k, …, α_1)) - ν(μ^2 (γ, α_k), …, α_1)
\end{equation*}
This yields $ ν(γ α_k, …, α_1) = γ ν(α_k, …, α_1) $. If $ α_k $ is less than $ ℓ_m^r $, then $ ν(α_k, …, α_1) $ is non-empty and winds around $ m $, while $ γ $ leaves the arc at the opposite side. Hence the product vanishes, except if $ ν(α_k, …, α_1) $ is the identity. This happens precisely in the borderline case that $ α_k $ consists of $ r $ full turns: $ α_k = ℓ_m^r $. Only in this case additional $ β $ and $ γ $ on the left and right make sense. This explains the distinction between the first and second rule.

The rule for middle-split sequences has appearance similar to the odd case. One may in principle add $ γ $ and $ β $ simultaneously on both sides. This addition is however vacuous: The angles $ γ $ and $ β $ are not composable and $ γβ = 0 $.
\end{remark}

\subsection{Cancellation on parking garage sequences}
\label{sec:even-parking}
In this section, we perform first checks for the cocycle condition. Our starting point is a cochain $ ν $ defined in \autoref{def:even-construction-def} from input scalars $ \#ν^1 $. In the present section, we check that $ dν = [μ, ν] $ vanishes on certain sequences, which we call \emph{parking garage sequences}.

Many of the terms in $ [μ, ν] $ are easy to cancel away in pairs. Some are harder and cancel away only as a whole. All sequences $ α_1, …, α_k $ producing hard terms in $ [μ, ν] $ are of the same type: they have an angle $ α_i $ around $ m $ such that $ μ(…, ν^1 (α_i), …) $ is a nonzero contribution. That is, once we prolong $ α_i $ by $ r $ turns around $ m $, the sequence $ α_1, …, ℓ^r α_i, …, α_k $ becomes a disk sequence. After $ α_k $ plus $ ℓ^r $ turns around $ m $, the prolonged sequence compensates its turns by winding back around $ m $ in clockwise direction. Let us regard the path traced by the sequence as it winds back. Regard the polygons lying around $ m $ in clockwise order. Since $ α_1, …, ℓ^r α_i, …, α_k $ is supposed to be a disk sequence, the winding back path runs around all these polygons in clockwise order. It may have additional disks stitched to it at the polygons' outside, but the basic structure is a helix consisting of the polygons around $ m $. Such a sequence resembles a parking garage spiral, with optional parking space attached on the exterior of each polygon. The schematic is depicted in \autoref{fig:even-parking-figure}.

Let us describe in formulas how the helix is formed. Denote for a moment by $ P_1, …, P_l $ the polygons traced by the sequence. Let $ α_1^{(i)}, …, α_{s_i}^{(i)} $ be their internal angles, with $ α_1^{(i)} $ being the angle at $ m $. The parking garage sequence then consists of the angles
\begin{equation*}
α_2^{(1)}, …, α_{s_1-1}^{(1)}, α_2^{(2)} α_{s_1}^{(1)}, …, α_{s_2-1}^{(2)}, α_2^{(3)} α_{s_2}^{(2)}, …, α_{s_l-1}^{(l)}
\end{equation*}
plus the long turn angle around $ m $, consisting of all the polygon angles at $ m $ minus $ r $ full turns:
\begin{equation*}
α_1^{(1)} … α_l^{(l)} ℓ^{-r}.
\end{equation*}
Let us explain how the additional parking space is attached. Regard a polygon $ α_1^{(i)}, …, α_{s_i}^{(i)} $ in the spiral. The angle $ α_1^{(i)} $ is the angle at $ q $. The angles $ α_2^{(i)} $ and $ α_{s_i}^{(i)} $ are the angles next to $ q $ and are used to attach the polygons to each other, forming the parking spiral. The parking spiral has $ l $ polygons and $ l-1 $ interior arcs. Its outer boundary consists of $ \sum (s_i - 1) - 1 $ many exterior arcs, a start arc and an end arc. The additional parking space in the form of disk sequences $ β_1, …, β_m $ may now be attached to the exterior arcs. Wherever we add parking space around the spiral, we augment the garage sequence by that additional disk sequence. When attaching to arcs not involved in the spiral gluing, the augmentation looks like
\begin{equation*}
…, α_j^{(i)}, α_{j+1}^{(i)}, … ~\leadsto~ …, β_1 α_j^{(i)}, β_2, …, β_{m-1}, α_{j+1}^{(i)} β_m, ….
\end{equation*}
When attaching to one of the two exterior arcs of the polygon next to the gluing, the augmentation rather looks like
\begin{equation*}
…, α_2^{(j+1)} α_{s_j}^{(j)}, α_3^{(j+1)}, … ~\leadsto~ …, β_1 α_2^{(j+1)} α_{s_j}^{(j)}, β_2, …, β_{m-1}, α_3^{(j+1)} β_m, …
\end{equation*}
in case of the first exterior arc of a polygon, and looks like
\begin{equation*}
…, α_{s_j-1}^{(j)}, α_2^{(j+1)} α_{s_j}^{(j)}, … ~\leadsto~ …, β_1 α_{s_j-1}^{(j)}, β_2, …, β_{m-1}, α_2^{(j+1)} α_{s_j}^{(j)} β_m, ….
\end{equation*}
in case of the last exterior arc of a polygon. Let us put this definition on paper.

\begin{definition}
A \emph{parking garage sequence} consists of tracing consecutive polygons around $ m $ in clockwise order, and compensating all these turns minus $ ℓ^r $ by a single angle around $ m $. Additional disk sequences may be stitched to the exterior arcs of the sequence.
\end{definition}

\begin{figure}
\centering
\begin{subfigure}{0.45\linewidth}
\centering
\begin{tikzpicture}
\path[draw] (0, 0) -- (0, -1) coordinate[pos=0.6] (alpha-1-start) arc(270:90:1.8) coordinate[pos=0.1] (alpha-1-end) coordinate (back-1);
\path (back-1) arc(90:-90:1.7 and 1) coordinate (front-1);
\path[draw] (0, 0) to[bend left] ($ (back-1)!0.5!(front-1) $) coordinate (int-left-1);
\foreach \i in {1, 2, 3} {
\path (back-1) arc(90:-90:1.7 and 1) coordinate (front-1) arc(270:90:1.8) coordinate (back-2);
\path (back-2) arc(90:-90:1.7 and 1) coordinate (front-2);
\path (int-left-1) to[bend left] ($ (back-2)!0.5!(front-2) $) coordinate (int-left-2);
\path[fill=white, inner sep=1mm, opacity=0.8] (back-1) arc(90:-90:1.7 and 1) arc(270:90:1.8) -- (int-left-2) to[bend right] (int-left-1) .. controls ++(0.4, -0.4) and ++(0.5, 0.5) .. coordinate[pos=0.3] (int-front-linear-\i) (int-left-1) -- cycle;
\path[draw] (back-1) arc(90:-90:1.7 and 1) arc(270:90:1.8);
\path[draw] (int-left-1) .. controls ++(0.5, 0.5) and ++(0.4, -0.4) .. ++(0, 0) to[bend left] (int-left-2);
\path (int-left-1) coordinate (int-left-linear-\i);
\path (front-1) coordinate (front-linear-\i);
\path (int-left-2) coordinate (int-left-next-linear-\i);
\path (front-2) coordinate (front-next-linear-\i);
\path (back-1) arc(90:-30:1.7 and 1) coordinate (front-right-linear-\i);
\path (back-1) arc(90:-20:1.7 and 1) coordinate (front-right-angle-start-\i);
\path (back-1) arc(90:-50:1.7 and 1) coordinate (front-right-angle-end-\i);
\path (back-1) arc(90:30:1.7 and 1) coordinate (front-right-prev-start-\i);
\path (back-1) arc(90:5:1.7 and 1) coordinate (front-right-prev-end-\i);
\path (back-1) arc(90:-5:1.7 and 1) coordinate (front-right-split-\i);
\path (back-2) coordinate (back-1);
\path (front-2) coordinate (front-1);
\path (int-left-2) coordinate (int-left-1);
};
\path[draw] (back-2) arc(90:-30:1.7 and 1) coordinate[pos=0.8] (alpha-k-start) coordinate (top-end);
\path[draw] (int-left-2) arc(150:-30:0.2) coordinate (int-top-end);
\path[draw] (top-end) -- (int-top-end) coordinate[pos=0.4] (alpha-k-end) coordinate[pos=0.85] (alpha-start);
\path[draw, ->, bend right=45] (alpha-k-start) to node[at start, below right] {$ α_k $} (alpha-k-end);
\path[draw, ->, bend right=45] (alpha-1-start) to node[at start, below right] {$ α_1 $} (alpha-1-end);
\path[draw, ->, gray, dashed, rounded corners] (alpha-start) to[bend right=80, looseness=1.6] node[at end, black, left] {$ α $} ($ (int-left-next-linear-3) + (-0.2, 0) $) to[bend right] ($ (int-left-next-linear-2) + (-0.2, 0) $) .. controls ++(1, -0.8) and ++(1, 1) .. ++(0, 0) .. controls ++(-0.5, -1) and ++(-0.7, 0.3) .. ($ (int-left-next-linear-1) + (0, -0.3) $) coordinate (alpha-end);
\foreach \i in {1, 2, 3} \path[draw, dashed] (int-left-linear-\i) -- (front-linear-\i);
\foreach \i in {1, 2, 3} \path[draw, dashed] (front-right-linear-\i) -- (int-front-linear-\i);
\path[draw, ->, bend right=70, looseness=1.5] (front-right-angle-start-1) to node[at start, below right] {$ α_{s+1} $} (front-right-angle-end-1);
\path[draw, ->, bend right=70, looseness=2] (front-right-prev-start-1) to node[at end, above right] {$ α_s $} (front-right-prev-end-1);
\path[draw] ($ (front-right-split-1) + (-0.15, 0) $) -- ($ (front-right-split-1) + (0.15, -0.1) $);
\path[draw] ($ (front-right-split-1) + (-0.15, -0.05) $) -- ($ (front-right-split-1) + (0.15, -0.15) $);
\path[draw, decorate, decoration={brace, mirror, amplitude=0.4cm}] ($ (int-left-next-linear-3) + (-2, 0.3) $) -- ($ (int-left-linear-2) + (-2, 0) $) node[midway, left, shift={(-0.5, 0)}] {$ α $};
\path[draw, decorate, decoration={brace, mirror, amplitude=0.4cm}] ($ (int-left-linear-2) + (-2, -0.1) $) -- (-2, -0.5) node[midway, left, shift={(-0.5, 0)}] {$ ℓ^r $};
\path[draw, decorate, decoration={brace, amplitude=0.4cm}] ($ (int-left-next-linear-3) + (2.5, 0.3) $) -- ($ (int-left-linear-1) + (2.5, 0.5) $) node[midway, right, shift={(0.5, 0)}] {$ top $};
\path[draw, decorate, decoration={brace, amplitude=0.4cm}] ($ (int-left-linear-1) + (2.5, 0.4) $) -- ($ (int-left-linear-1) + (2.5, -0.5) $) node[midway, right, shift={(0.5, 0)}] {$ mid $};
\path[draw, decorate, decoration={brace, amplitude=0.4cm}] ($ (int-left-linear-1) + (2.5, -0.6) $) -- (2.5, -0.5) node[midway, right, shift={(0.5, 0)}] {$ bot $};
\end{tikzpicture}
\caption{Minimal parking garage}
\label{fig:hochschild-construction-garage}
\end{subfigure}
\begin{subfigure}{0.4\linewidth}
\centering
\begin{tikzpicture}
\begin{scope}[scale=2]
\path[draw] (0, 0) -- (0, -1) coordinate (1) -- (-0.7, -1) coordinate (2) node[midway, shift={(0, -0.2)}, sloped] {…} -- (-1, -0.7) coordinate (3) node[midway, shift={(0, -0.2)}, sloped] {…} -- (-1, 0) coordinate (4) node[midway, shift={(0, 0.2)}, sloped] {…} -- (-1, 0.7) coordinate (5) node[midway, shift={(0, 0.2)}, sloped] {…} -- (-0.7, 1) coordinate (6) node[midway, shift={(0, 0.2)}, sloped] {…} -- (0, 1) coordinate (7) node[midway, shift={(0, 0.2)}, sloped] {…} -- (0, 0) -- (-1, 0);
\path[draw] (1) -- (0.7, -1) coordinate (8) node[midway, shift={(0, -0.2)}, sloped] {…} -- (1, -0.7) coordinate (9) node[midway, shift={(0, -0.2)}, sloped] {…} -- (1, 0) coordinate (10) node[midway, shift={(0, -0.2)}, sloped] {…};
\path[draw, gray, dashed] (1, 0) -- (0, 0) node[near start, above, black] {etc.};
\path[draw, ->] (70:0.3) arc(70:380:0.3);
\path (135:0.15) node {$ α $};
\end{scope}
\path[fill] (0, 0) circle[radius=0.05];
\path[draw] (1) -- ++(300:0.4) coordinate[pos=0.5] (1-start) (1) -- ++(240:0.4) coordinate[pos=0.5] (1-end);
\path[draw] (2) -- ++(300:0.4) coordinate[pos=0.6] (2-start) (2) -- ++(195:0.4) coordinate[pos=0.6] (2-end);
\path[draw] (3) -- ++(255:0.4) coordinate[pos=0.6] (3-start) (3) -- ++(150:0.4) coordinate[pos=0.6] (3-end);
\path[draw] (4) -- ++(210:0.4) coordinate[pos=0.5] (4-start) (4) -- ++(150:0.4) coordinate[pos=0.5] (4-end);
\path[draw] (5) -- ++(210:0.4) coordinate[pos=0.6] (5-start) (5) -- ++(105:0.4) coordinate[pos=0.6] (5-end);
\path[draw] (6) -- ++(165:0.4) coordinate[pos=0.6] (6-start) (6) -- ++(60:0.4) coordinate[pos=0.6] (6-end);
\path[draw] (7) -- ++(120:0.4);
\path[draw] (8) -- ++(345:0.4) coordinate[pos=0.6] (8-start) (8) -- ++(240:0.4) coordinate[pos=0.6] (8-end);
\path[draw] (9) -- ++(30:0.4) coordinate[pos=0.6] (9-start) (9) -- ++(285:0.4) coordinate[pos=0.6] (9-end);
\path[draw] (10) -- ++(330:0.4);
\foreach \i in {2, 3, 5, 6, 8, 9} \path[draw, ->, bend right=120, looseness=3] (\i-start) to (\i-end);
\foreach \i in {1, 4} \path[draw, ->, bend right=140, looseness=8] (\i-start) to (\i-end);
\end{tikzpicture}
\caption{Additional parking space}
\end{subfigure}
\caption{Illustration of parking garage sequences}
\label{fig:even-parking-figure}
\end{figure}
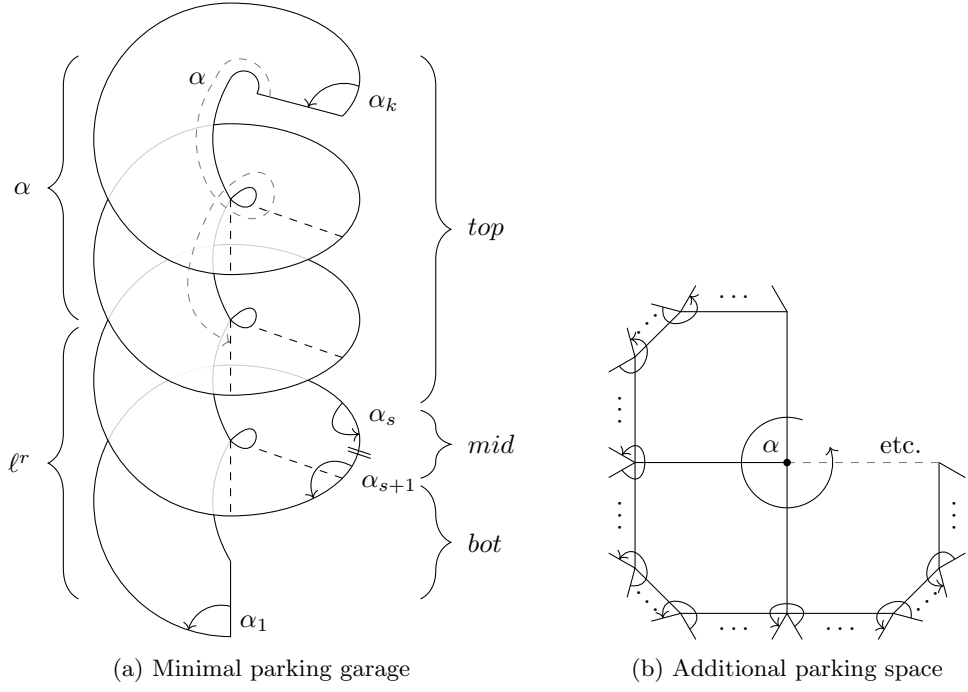

We aim at showing that $ d ν_P $ vanishes on its parking garage sequences. Regard such a garage sequence $ α_s, …, α_1, α, α_k, …, α_{s+1} $. What terms appear in $ d ν_P $, applied to this sequence? First, it is possible to apply $ ν^1 $ to $ α $. Indeed, the garage sequence becomes a disk sequence once we prolong $ α $ by $ r $ turns and hence $ μ(…, ν^1 (α), …) $ is one of the terms in $ d ν_P $. Second, it is possible to apply an inner end-split $ ν $ to any part of the outer sequence that is precisely $ r $ turns long. Such terms give roughly as many contributions as there are polygon sectors around the spiral, and they add up nicely. Finally, it is also possible to apply an inner $ μ $ to the top-most part of the garage or the bottom-most part of the garage.

Let us explain why no other terms appear.

To ease the calculation, we reduce a given garage sequence to its minimal version that has all additional parking space removed. This minimal version has the same individual terms in $ d ν_P $ as the original garage sequence: Additional parking space merely consists of (incomplete) disk sequences and creates no further options to evaluate $ μ $ or $ ν $. Moreover, the value of the individual terms stays exactly the same: For example, the signs for inner end-split $ ν $ evaluations are independent of the length of the $ β $ and $ γ $ angles entering and leaving the additional parking space. While the signs for the top-most inner $ μ $ do depend on the degree of the angle leaving the parking space at angle $ α $ below the top, this is compensated again by the sign $ (-1)^{‖μ‖ · …} $ associated with this term $ ν(…, μ(α, α_k, …), …) $ in the Hochschild differential, since $ μ $ is odd.

Let us now make this rigorous, check the signs and add up all terms.

\begin{lemma}
We have $ d ν_P = 0 $ on parking garage sequences.
\end{lemma}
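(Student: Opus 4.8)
The plan is to evaluate the differential $d\nu_P = [\mu,\nu_P] = \mu\cdot\nu_P - \nu_P\cdot\mu$ directly on a garage sequence, list the finitely many terms it produces, and check that they cancel. Since $\nu_P$ is even and $\|\mu\| = 1$, the sign $(-1)^{\|\nu_P\|\,\|\mu\|}$ in the Gerstenhaber bracket is trivial, so the two Gerstenhaber products enter with opposite signs and nothing more. The whole argument is thus a bookkeeping of which inner insertions of $\nu_P$ (for the $\mu\cdot\nu_P$ summand) and of $\mu$ (for the $\nu_P\cdot\mu$ summand) are nonzero, followed by a sign check.

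First I would reduce to the minimal garage. As argued just before the statement, additional parking space is assembled from incomplete disk sequences, so it opens no new slots at which $\mu$ or $\nu_P$ evaluates nontrivially, and the surviving terms keep both their values and their signs. It therefore suffices to treat the minimal garage sequence $\alpha_s,\dots,\alpha_1,\alpha,\alpha_k,\dots,\alpha_{s+1}$, whose distinguished angle $\alpha = \alpha_1^{(1)}\cdots\alpha_l^{(l)}\,\ell^{-r}$ is the long turn compensating the helix.

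Second, I would enumerate the terms. For the $\mu\cdot\nu_P$ summand, the inner $\nu_P^1$ can fire only on the slot $\alpha$, returning $\#\nu_P^1(\alpha)\,\alpha\,\ell_m^r$; this restores the missing $r$ turns, so the outer $\mu$ sees a genuine all-in disk sequence and returns the identity $\id$ of the start/end arc. The inner $\nu_P^{\geq 2}$ can fire only on a consecutive stretch of the helix that winds exactly $r$ times around $m$, and there is one such end-split stretch beginning at each polygon corner of the spiral, so these contributions are indexed by the polygon sectors. For the $\nu_P\cdot\mu$ summand, the inner $\mu$ requires a complete sub-disk among consecutive outer angles; in a minimal garage the only complete disks are the top-most and the bottom-most polygon stacks, everything else being glued through $m$ and hence incomplete, and collapsing either leaves an end-split sequence on which the outer $\nu_P$ evaluates. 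A genuine part of this step is to verify, against the four cases of \autoref{def:even-construction-def}, that no middle-split or other contribution slips in.

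Finally comes the cancellation. Using the sign remark (reduced degrees sum to even parity along a disk sequence, and $\|\ell_m^r\|$ is odd), all signs collapse to the uniform pattern recorded in the definition. By additivity $\#\nu_P^1(\alpha) = \sum_{\text{sectors}}\#\nu_P^1(\cdot)$, so the single $\nu_P^1(\alpha)$ term breaks up into one summand per sector; I would then match these against the sector-indexed end-split contributions and against the two boundary inner-$\mu$ terms, exhibiting the total as a telescoping sum that vanishes. The hard part will be this last step: making the magic angle of each end-split stretch correspond to exactly the right polygon sector so that the scalars $\#\nu_P^1$ line up, and confirming that the top and bottom inner-$\mu$ terms close the telescope rather than leaving a residue. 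I expect the interior of the sum to cancel essentially formally once the signs are normalized, with all the real content concentrated at the two ends of the helix.
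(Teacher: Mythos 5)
Your overall architecture matches the intended argument: reduce to the minimal garage, observe that the only nonzero insertions are the single $\mu(\dots,\nu^1(\alpha),\dots)$ term, the family of inner end-split $\nu^{\geq 2}$ insertions on stretches of the helix winding exactly $r$ times around $m$, and the two inner-$\mu$ terms that swallow $\alpha$ through the top or the bottom of the spiral. Two points in your cancellation step are wrong or missing, however. First, when the inner $\mu$ collapses the top (or bottom) of the garage together with $\alpha$, the outer $\nu_P$ is evaluated on a \emph{middle-split} sequence, not an end-split one: after the collapse, the residual $r$ turns sit in the interior of the remaining sequence. This matters because the middle-split rule weights the result by the input scalar of the \emph{splitting angle} --- which for these two terms comes out as $\#\nu^1(\angletop\cdot\alpha^{-1})$ and $\#\nu^1(\angletop)$ respectively --- and carries the sign $(-1)^{\Vert\alpha_1\Vert+\dots+\Vert\alpha_i\Vert}$ of the split position; with end-split weights the sum does not close.

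Second, the total is not a telescope whose content is concentrated at the two ends of the helix. Which terms are present at all depends on where the start/end arc of the sequence sits: writing $\anglebot\cdot\anglemid\cdot\angletop=\alpha\ell^r$ for the decomposition of the spiral angle by the start sector $\anglemid$, the top-anchored end-split insertions fire only if $\angletop\geq\ell^r$, the bottom inner-$\mu$ only if $\anglebot\geq\alpha$, and so on. The actual mechanism is a dichotomy: exactly one of $\angletop\geq\ell^r$ and $\anglebot\geq\alpha$ holds, exactly one of $\angletop\geq\alpha$ and $\anglebot\geq\ell^r$ holds, and the two alternatives in each pair happen to contribute the same total ($-\#\nu^1(\angletop)$ for the first pair, $\#\nu^1(\ell^r)-\#\nu^1(\anglemid)-\#\nu^1(\anglebot)$ for the second), which then cancels the $\nu^1(\alpha)$ contribution $\#\nu^1(\angletop)+\#\nu^1(\anglemid)+\#\nu^1(\anglebot)-\#\nu^1(\ell^r)$ by additivity of $\#\nu^1$. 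You also need to treat separately the degenerate positions where the sequence starts immediately before or after $\alpha$, so that there is no $\anglemid$ sector and the term inventory changes again. Without the middle-split identification and this case analysis the argument does not go through.
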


\begin{proof}
Let us start by listing up all terms with signs and result. Since we choose a parking garage sequence without extra angles $ β $ or $ γ $ at start or end, all results of contributions $ μ(ν) $ or $ ν(μ) $ in $ d ν_P $ are scalar multiples of the identity of the first/last arc $ h(α_s) = t(α_{s+1}) $ of the garage sequence. In the list below, we indicate this scalar for all terms, as well as the sign due to the Hochschild differential.

We start with the generic case, where the beginning and end of the garage sequence lie somewhere one the outer spiral. That is, the final arc is not $ α $ or $ α_k $. In other words, the first arc is not $ α $ or $ α_1 $.

To ease the calculation, let us define three angles $ \angletop $, $ \anglemid $ and $ \anglebot $, all winding around $ m $. These angles can best be read off from \autoref{fig:hochschild-construction-garage}. First of all, $ \anglemid $ is the sector around $ m $ that the start/end of the sequence lies in. For example, the arc $ h(α_s) = t(α_{s+1}) $ lies in this sector. The angle $ \anglemid $ now splits the spiral angle $ αℓ^r $ into two more parts: $ \angletop $ lying above $ \anglemid $, and $ \anglebot $ lying below $ \anglemid $.

In other words, $ \angletop $ forms a disk sequence together with the angles $ α_{s+1}, …, α_k $ (minus the part of $ α_{s+1} $ and those successor angles that reach into the special sector $ \anglemid $). Similarly, $ \anglebot $ forms a disk sequence with $ α_1, …, α_s $ (minus the part lying in $ \anglemid $).

With this notation we can write $ \anglebot · \anglemid · \angletop = α ℓ^r $. Recall that $ \#ν^1 (β) $ is the scalar coefficient of $ ν^1 (β) $, for any angle $ β $. With this in mind, we have
\begin{equation*}
\#ν^1 (\anglebot) + \#ν^1 (\anglemid) + \#ν^1 (\angletop) = \#ν^1 (α) + \#ν^1 (ℓ^r).
\end{equation*}
For convenience, denote by $ \anglefirst $ that sector around $ m $ that is bottom-most in the garage sequence. In other words, $ α_1 $ lies in this sector.

\begin{itemize}
\item[1.] $ μ(…, ν^1 (α), …) $ \par\noindent This term is characteristic for the garage sequence and appears always. Its result has scalar coefficient $ \#ν^1 (α) $. The Hochschild sign is $ +1 $.
\item[2.] $ μ(…, α, ν(α_k, …), …) $ \par\noindent This top-most inner end-split $ ν $ appears if the outer sequence from start to top is at least $ r $ full turns long. Its result has scalar coefficient $ - \#ν^1 (ℓ^r) $. The Hochschild sign is $ +1 $.
\item[3.] $ μ(…, ν(…, α_1), α, …) $ \par\noindent This bottom-most inner end-split $ ν $ appears if the outer sequence from bottom to stop is at least $ r $ full turns long. Its result has scalar coefficient $ - \#ν^1 (\anglefirst) $, where $ \anglefirst $ is the first sector around $ m $ at the bottom of the sequence. The Hochschild sign is $ +1 $.
\item[4.] $ μ(…, α, \underbrace{…}_{≥1}, ν(…), …) $ \par\noindent Such top-part inner end-split $ ν $ terms appear if the outer sequence from start to top is more than $ r $ full turns long. Their individual result coefficients are $ -\#ν^1 $ of the next sector after their end. In total, all these terms add up to $ - \#ν^1 (\angletop · ℓ^{-r}) = - \#ν^1 (\angletop) + \#ν^1 (ℓ^r) $. The Hochschild sign is $ +1 $.
\item[5.] $ μ(…, ν(…), \underbrace{…}_{≥1}, α, …) $ \par\noindent Such bottom-part inner end-split $ ν $ terms appear if the outer sequence from bottom to stop is more than $ r $ full turns long. Their individual result coefficients are $ -\#ν^1 $ of the next sector after their end. In total, all these terms add up to $ - \#ν^1 (\anglemid) - \#ν^1 (\anglebot · ℓ^{-r}) + \#ν^1 (\anglefirst) $. The Hochschild sign is $ +1 $.
\item[6.] $ ν(…, μ(α, α_k, …, α_t), …) $ \par\noindent This top-most inner $ μ $ appears if the outer sequence from start to top includes $ α $. The first angle of the inner $ μ $ is a certain $ α_t $, and a part $ α_t^{(1)} $ of it reaches outside the disk, so write $ α_t = α_t^{(2)} α_t^{(1)} $. The outer $ ν $ application is middle-split and gives an extra sign, so that the result coefficient is
\begin{equation*}
(-1)^{|α_t^{(1)}} \#ν^1 (\angletop · α^{-1}) · (-1)^{‖α_{s+1}‖ + … + ‖α_t^{(1)}‖}.
\end{equation*}
The Hochschild sign is $ (-1)^{1 + ‖α_{s+1}‖ + … + ‖α_{t-1}‖} $, rendering a total contribution to $ d ν_P $ of merely $ \#ν^1 (\angletop · α^{-1}) $.
\item[7.] $ ν(…, μ(α_t, …, α_1, α), …) $ \par\noindent This bottom-most inner $ μ $ appears if the outer sequence from bottom to stop includes $ α $. The final angle of the inner $ μ $ is a certain $ α_t $, and a part $ α_t^{(2)} $ of it reaches outside the disk, so write $ α_t = α_t^{(2)} α_t^{(1)} $. The outer $ ν $ is again middle split, yielding a result coefficient of $ (-1)^{‖α_{s+1}‖ + … + ‖α_k‖} \#ν^1 (\angletop) $.
The Hochschild sign is $ (-1)^{1 + ‖α_{s+1}‖ + … + ‖α_k‖} $, rendering a total contribution to $ d ν_P $ of $ - \#ν^1 (\angletop) $.
\end{itemize}
It remains to check that all these terms cancel out, regardless of the length of $ α $ and the location of the start and end index $ s $. For this, we need a case distinction after the length of the angles involved. For example, by $ \angletop ≥ α $ we mean that the top angle of the garage sequence includes $ α $. We are now ready to summarize the contributions of terms 1–7 as follows:
\begin{itemize}
\item Term 1 always yields a contribution of $ \#ν^1 (\angletop) + \#ν^1 (\anglemid) + \#ν^1 (\anglebot) - \#ν^1 (ℓ^r) $.
\item If $ \angletop ≥ ℓ^r $, then 2+4 yield a total contribution of $ -\#ν^1 (\angletop) $.
\item If $ \anglebot ≥ α $, then 7 yields a total contribution of $ -\#ν^1 (\angletop) $.
\item If $ \angletop ≥ α $, then 6 yields a total contribution of $ \#ν^1 (ℓ^r) - \#ν^1 (\anglemid) - \#ν^1 (\anglebot) $.
\item If $ \anglebot ≥ ℓ^r $, then 3+5 yield a total contribution of $ \#ν^1 (ℓ^r) - \#ν^1 (\anglemid) - \#ν^1 (\anglebot) $.
\end{itemize}
Since $ α ℓ^r = \anglebot · \anglemid · \angletop $ and $ \anglemid $ consists of precisely one sector, we have that either $ \angletop ≥ ℓ^r $ or $ \anglebot ≥ α $ (but not both). Similarly, either $ \angletop ≥ α $ or $ \anglebot ≥ ℓ^r $. We conclude that either way, all contributions to $ d ν_P $ add up as
\begin{equation*}
\#ν^1 (\angletop) + \#ν^1 (\anglemid) + \#ν^1 (\anglebot) - \#ν^1 (ℓ^r)  - \#ν^1 (\angletop) + \#ν^1 (ℓ^r) - \#ν^1 (\anglemid) - \#ν^1 (\anglebot) = 0.
\end{equation*}

Let us now regard the two exceptions where the start is right before $ α $ or right after $ α $. The difference with the generic case is that there is no proper $ \anglemid $ sector. Let us regard the first exceptional case, where the sequence consists of the angles $ α, α_1, …, α_k $. Then there are no 2, 4 or 6 terms, since the top is basically empty. The bottom-most terms 3+5 contribute however with $ \#ν^1 (α) $ as in the generic case, and the bottom-most term 7 is special and contributes by $ \#ν^1 (ℓ^r) $. A special contribution of $ - \#ν^1 (ℓ^r) $ comes from $ μ(ν(α_k, …), …, α) $, the equivalent of the $ \anglemid $ term in the generic case of 5. Finally, the 1 term contributes $ \#ν^1 (α) $ as in the generic case. In total, these four terms add up to zero.

Let us regard the second exceptional case, where the sequence consists of the angles $ α_1, …, α_k, α $. Then there are no 3, 5 or 7 terms, since the bottom is empty. The top-most term 2 contributes $ - \#ν^1 (α) $, the other top 4 terms contribute $ - \#ν^1 (ℓ^r) $, and the top 6 term contributes $ \#ν^1 (ℓ^r) $. Together with the type 1 term $ \#ν^1 (α) $, this adds up to zero again.
\end{proof}

\begin{lemma}
We still have $ d ν_P $ on parking garage sequences when a $ γ $ is attached at the front and/or a $ β $ attached at the back.
\end{lemma}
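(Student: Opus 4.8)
The plan is to run the argument of the preceding lemma essentially verbatim, the only novelty being that the attached angles $\gamma$ and $\beta$ are carried along to the output. Concretely, I would first reduce to the minimal garage sequence as before, and then decorate it by replacing its first angle $\alpha_s$ with $\gamma\alpha_s$ and its last angle $\alpha_{s+1}$ with $\alpha_{s+1}\beta$. Since the undecorated computation produced, for each of the seven term types, a scalar multiple of $\id_{h(\alpha_s)}$, the first task is to check that the decorated computation produces the very same scalar multiple of $\gamma\beta$ (and of $\gamma$, resp.\ $\beta$, in the one-sided cases). The geometric input here is that $\gamma$ and $\beta$ both point \emph{out} of every immersed disk traced by the sequence, so they can never be absorbed into an inner $\mu$ or an inner $\nu_P$; hence no contribution beyond the seven already enumerated can be created by the decoration, and the individual terms are in bijection with those of the preceding lemma. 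I would organize the verification into the three cases ``$\gamma$ only'', ``$\beta$ only'' and ``both''.

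The conceptual heart of the matter is the dichotomy between how single and double decorations leave the computation. In every term of the form $\mu(\dots,\nu_P(\dots),\dots)$ the outer product closes up a full disk and can export at most one decoration: a leading $\gamma$ exits as a final-out and a trailing $\beta$ exits as a first-out. A leading $\gamma$ always occupies the left-most slot, hence never enters a Gerstenhaber sign, so the ``$\gamma$ only'' case is a clean factorization of the preceding lemma. A trailing $\beta$, by contrast, lies to the right of every inserted operation $\omega$, so attaching it multiplies the Gerstenhaber sign of each term by $(-1)^{(\|\beta\|+1)\|\omega\|}$. Because $\mu$ is odd and $\nu_P$ is even, this factor is trivial on the $\mu(\nu_P)$ terms and equals $(-1)^{\|\beta\|+1}$ on the $\nu_P(\mu)$ terms; the apparent mismatch is however absorbed by the first-out sign $(-1)^{|\beta|}=(-1)^{\|\beta\|+1}$, which is present precisely on the $\mu$ terms and absent on the old-era end-split $\nu_P$ rule. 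The upshot is a single global factor $(-1)^{\|\beta\|+1}$ common to all surviving terms, which then factors out together with $\beta$, reducing the ``$\beta$ only'' case to the preceding lemma.

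The step I expect to be the main obstacle is the ``both'' case. Here every outer-$\mu$ term would require $\mu$ to be evaluated on a doubly-decorated disk sequence, which vanishes by definition, so the terms $1$–$5$ all drop out. The closures that they provided in the undecorated proof must therefore be re-realized through the one rule that can export both decorations at once, namely the new-era end-split rule $\nu_P^k(\gamma\alpha_k,\dots,\alpha_1\beta)=-\#\nu^1(n)\,\gamma\beta$ applied as the \emph{outer} $\nu_P$. The crux is to show that, after this reshuffling, the surviving (outer-$\nu_P$, inner-$\mu$) terms still reproduce the sector bookkeeping $\anglebot\cdot\anglemid\cdot\angletop=\alpha\ell^r$ that drove the cancellation before: one has to match each new-era magic angle $n$ against the $\#\nu^1$-scalar of the corresponding vanished $\mu$ term and confirm that the middle-split signs $(-1)^{\|\alpha_1\|+\dots+\|\alpha_i\|}$ combine with the uniform $(-1)^{\|\beta\|+1}$ factor to leave the algebraic identity of the preceding lemma intact. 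Once this scalar-and-sign matching is established, the sum telescopes to zero exactly as before, now with $\gamma\beta$ in place of $\id_{h(\alpha_s)}$.
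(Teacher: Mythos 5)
Your overall plan (reduce to the minimal garage, then treat ``$\gamma$ only'', ``$\beta$ only'' and ``both'' separately) matches the paper's organization, but two of your central claims are wrong, and they are exactly the points where the actual work lies.

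First, the claim that the decorations ``can never be absorbed into an inner $\mu$ or an inner $\nu_P$'' and that the terms are therefore ``in bijection with those of the preceding lemma'' fails whenever the decoration lands on the long turn angle $\alpha$ or on $\alpha_k$. If the final angle is $\alpha$ and becomes $\gamma\alpha$, then $\nu^1(\gamma\alpha)$ contributes an extra $\#\nu^1(\gamma)$; the term $\nu(\mu(\gamma\alpha,\alpha_k,\dots),\dots)$ changes its magic angle from $\ell^r$ to $\ell^r\gamma^{-1}$ (losing $\#\nu^1(\gamma)$, or stopping entirely when $\gamma\ge\ell^r$); and a genuinely \emph{new} term $\nu^1(\mu(\gamma\alpha,\alpha_k,\dots,\alpha_1))$ appears once $\gamma>\ell^r$. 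The analogous phenomena occur for $\beta$ appended to $\alpha$. If the final angle is $\alpha_k$, several old-era inner $\nu$'s become new-era, so their magic angle changes from $\ell^r$ to the next sector $n$, and the term types 4 and 6 disappear. None of this is a ``clean factorization''; the cancellation has to be re-verified with these shifted and newly created contributions, which is the bulk of the paper's proof and is absent from yours.

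Second, your treatment of the ``both'' case misses the one observation that makes it trivial: $\gamma$ is appended behind one angle of the sequence and $\beta$ behind a different (consecutive) one, so $\gamma$ and $\beta$ leave their common arc on opposite sides and are incomposable, i.e.\ $\gamma\beta=0$. Every surviving term is a scalar multiple of $\gamma\beta$ and hence vanishes identically; there is no sector bookkeeping to re-establish and no role for the new-era rule to ``export both decorations''. Your proposed re-derivation via $\nu_P^k(\gamma\alpha_k,\dots,\alpha_1\beta)=-\#\nu^1(n)\,\gamma\beta$ is chasing terms that are all zero, and treating this as ``the main obstacle'' inverts the actual difficulty profile of the lemma: the hard cases are the one-sided ones, not the two-sided one.
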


\begin{proof}
Let us regard a garage sequence with additional $ γ $ at the end. Regard first the case where the final angle is one of $ α_1, …, α_{k-1} $. Then that final angle $ α_{s+1} $ changes to $ γ α_{s+1} $. We claim the effect on $ d ν_P $, applied to the garage sequence, is merely a multiplication by $ γ $. Essentially, this means checking that all old-era $ ν $ contributions stay old-era.

For example, let us check the first three types of terms explicitly. The contribution of term 1 gets only multiplied by $ γ $. For term 2, this is also true, since the final angle is assumed not to be $ α $, and the outer $ μ $ itself gets multiplied by $ γ $. This holds likewise for term 3, except in the case when there are no angles to the left of the inner $ ν $, where the term looks like $ μ(ν(…, α_1), α, …) $. Since the final angle is not $ α_k $, the inner $ ν $ was of new-era type. Prolonging the final angle naturally preserves the new-era type of the $ ν $ evaluation.

Now let us regard the two special cases where the final angle is either $ α $ or $ α_k $. We start with the case where the final angle is $ α $. The garage is then the sequence $ α_1, …, α_k, γ α $. Inspecting all terms 1–7, most terms just get multiplied by $ γ $, but we also incur the following changes:
\begin{itemize}
\item $ μ(ν^1 (γ α), α_k, …, α_1) $ now contributes $ \#ν^1 (γ) $ extra,
\item $ ν(μ(γα, α_k, …), …, α_1) $ keeps contributing as long as $ γ < ℓ^r $, but the rotation amount of the outer $ ν $ now decreases from $ ℓ^r $ to $ ℓ^r γ^{-1} $. This means it contributes $ \#ν^1 (γ) $ less. When $ γ ≥ ℓ^r $, it stops contributing entirely, meaning it contributes $ \#ν^1 (ℓ^r) $ less than in the case without $ γ $.
\item The new term $ ν^1 (μ(γα, α_k, …, α_1)) $ suddenly starts contributing when $ γ > ℓ^r $, namely by $ \#ν^1 (γ ℓ^{-r}) $. The Hochschild sign is $ -1 $.
\end{itemize}
Adding up these extra contributions, the total remains precisely the same as in the case without $ γ $, for $ γ < ℓ^r $ as well as $ γ ≥ ℓ^r $.

Now regard the case that $ α_k $ is the final angle of the garage sequence. The sequence is then $ α, α_1, …, γ α_k $. We have the following contributions:
\begin{itemize}
\item[1.] $ μ(…, ν^1 (α)) $ \par\noindent This simply gets multiplied by $ γ $ and the result is $ \#ν^1 (α) γ $. The Hochschild sign is $ +1 $.
\item[2'.] $ μ(ν(γ α_k, …), …, α) $ \par\noindent While the generic case term 2 has inner $ ν $ of old-era type, this pendant is new-era and yields $ - \#ν^1 (\anglenext) γ $, where $ n $ is the next sector after the top of the garage. The Hochschild sign is $ +1 $.
\item[3.] $ μ(γ α_k, …, ν(…, α_1), α) $ \par\noindent In no case is it possible to take an inner $ ν $ that includes all angles from $ α_1 $ to $ α_k $, since these angles cover strictly more than $ r $ turns around $ m $. Therefore $ γ α_k $ lies outside of the inner $ ν $, which thereby remains old-era. The outer $ μ $ simply gets multiplied by $ γ $ and gives $ - \#ν^1 (\anglefirst) γ $. The Hochschild sign is $ +1 $.
\item[4.] Terms of type 4 do not appear when $ α_k $ is the final angle.
\item[5.] $ μ(…, ν(…), \underbrace{…}_{≥1}, α) $ \par\noindent We do not count $ μ(ν(γ α_k, …), …, α) $ among these terms, since we already attributed it to 2'. Then, all that changes for these type 5 terms is that they get multiplied by $ γ $. They add up to $ \big(- \#ν^1 (α) + \#ν^1 (\anglefirst)\big) γ $. The Hochschild sign is $ +1 $.
\item[6.] The term of type 6 does not appear when $ α_k $ is the final angle.
\item[7.] $ ν(…, μ(α_t, …, α_1, α)) $ \par\noindent Since the angles $ α_1 $, …, $ α_k $ cover strictly more than the angle $ α $ does, we have $ t < k $. In other words, $ ν $ was old-era and becomes new-era. Its new value is $ - \#ν^1 (\anglenext) γ $. The Hochschild sign is $ -1 $.
\end{itemize}
In total, this adds up to zero again. Let us finally comment on the changes we incur once we add $ β $ at the back, in addition to a possible $ γ $. Both $ μ^{≥3} $ and $ ν^{≥2} $ are “equivariant” under appending $ β $ at the back. It remains to check the cases where $ ν^1 $ is involved, and check for longer terms appearing because $ α $ gets longer. If one of $ α_1 $, …, $ α_k $ is the angle in the back of the sequence, it is readily checked that all terms simply get multiplied by $ β $. If $ α $ is the angle in the back, we incur the following changes:
\begin{itemize}
\item $ ν(…, μ(…, αβ)) $ still contributes as long as $ β < ℓ^r $, however the sequence the inner $ μ $ is applied to becomes longer and longer, and similarly the magic angle of the outer $ ν $ becomes shorter and shorter. We lose $ - \#ν^1 (β) $ as coefficient. If $ β ≥ ℓ^r $, the term does not contribute anymore at all, and we have lost $ - \#ν^1 (ℓ^r) $, compared to the sequence without $ β $. The Hochschild sign is $ -1 $. All signs together, we deduce an extra contribution of $ - \#ν^1 (β) $ or $ - \#ν^1 (ℓ^r) $.
\item $ ν^1 (μ(α_k, …, α_1, αβ)) $ starts to contribute once $ β > ℓ^r $, namely by $ \#ν^1 (β ℓ^{-r}) $. The Hochschild sign is $ -1 $.
\item $ μ(α_k, …, α_1, ν^1 (αβ)) $ contributes an extra $ \#ν^1 (β) $. The Hochschild sign is $ +1 $.
\end{itemize}
Whether $ β ≤ ℓ^r $ or $ β > ℓ^r $, we conclude the additional contribution vanishes, compared to the case without $ β $.

Finally, when both non-empty $ β $ and $ γ $ are appended, we conclude the result is a multiple of $ γβ $, which vanishes. Indeed, pick two consecutive angles around the garage sequence. Then appending an angle $ γ $ behind the first and an angle $ β $ behind the second necessarily makes $ β $ and $ γ $ incomposable.
\end{proof}

\subsection{Cancellation on other sequences}
\label{sec:even-other}
In the \autoref{sec:even-parking}, we checked that $ dν $ vanishes on parking garage sequences. Here $ ν $ is an even Hochschild cochain constructed in \autoref{sec:even-construction} from the input data $ m ∈ M $, $ r ≥ 1 $ and input scalars $ \#ν^1 (α) $. In the present section, we check that $ d ν $ also vanishes on all other sequences of angles. The procedure is as follows: Pick a sequence $ α_1, …, α_k $ of angles and evaluate $ d ν (α_k, …, α_1) $. This gives a collection of terms of the form $ μ(…, ν(…), …) $ and $ ν(…, μ(…), …) $. We show how to partition this collection of terms such that within each partition, the terms cancel each other. In contrast to the odd case, the partitions do not always consist of two, but sometimes also of three terms.

Of course, we cannot handle each individual sequence of angles individually, but rather need to classify sequences according to their shape. Most importantly, we distinguish the shapes according to the types of $ μ $ and $ ν $ that can be applied and the number of angles before and after the inner application. This way, we can partition all possible sequences of angles $ α_1, …, α_k $ and terms appearing in $ dν (α_k, …, α_1) $ in bulk format: Each of the partitions we provide makes reference to a particular shape.

In total, this procedure requires considerable case-checking effort, namely
\begin{itemize}
\item[1.] listing all partitions,
\item[2.] characterizing for each partition the required sequence shape,
\item[3.] proving that each partition sums up to zero,
\item[4.] mapping each term in $ d ν (…) $ to one partition,
\item[5.] proving all terms in all partitions are hit at most once,
\item[6.] proving all terms in all partitions are hit at least once.
\end{itemize}
We do not conduct all steps rigorously. In fact, we concentrate on 1, 2, 3, 4, but without rigor. Below, we list all partitions, ordered roughly after the type of sequence involved. Typically, such a sequence winds once around a certain area and then around another, possibly the one being nested in the other. We indicate the type of these areas as “disk”, “$ < ℓ^r $” or “$ ℓ^r $”. In the figures, the thick dot indicates the location of $ m $ and the grey rings indicate the magic angles of the $ ν(…) $ for every involved cancelling term.

\begin{remark}
The list indicates clearly that the given pair or triple of terms cancels. To see this, recall that $ ν(…) $ is by definition weighted with the input scalar $ \#ν^1 $ of its magic angle. In order to make the claimed pairs or triples of terms cancel each other, we need to show that a signed sum of input scalars of the magic angles is zero. Since the input scalars $ \#ν^1 $ are additive on angles, this amounts to checking that every indecomposable angle around $ m $ appears in an even number of magic angles (ignoring signs). The reader can easily convince himself that this is the case by looking at the grey rings around $ m $ in every figure: Every ray away from $ m $ hits an even number of grey rings.
\end{remark}

\input{even/fig_partitions.tex}

We now investigate all possible terms in $ d ν (…) $. For every term, we provide the other terms with which it cancels. These pairs or triples can be found back in the partition list above. A few possible terms are omitted due to analogy with other terms, and we have correspondingly not listed them in the partition table either. In \autoref{th:even-other-conclusion}, we draw the conclusion that $ dν = 0 $.

\begin{lemma}
\label{th:hochschild-construction-mu-nu-end-small-mid}
A contribution $ μ^{≥3} (…, ν^{≥2} (…)) $ with $ ν $ end-split with turning angle $ < ℓ^r $ cancels.
\end{lemma}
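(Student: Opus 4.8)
The plan is to evaluate the term directly from \autoref{def:even-construction-def} and then locate it in the cancellation table. First I would unpack the hypothesis. If the inner factor $\nu^{\geq 2}(\alpha_m,\dots,\alpha_1)$ is end-split with turning angle $\alpha < \ell^r$, then $\alpha_1,\dots,\alpha_m,\alpha$ is a disk sequence and its value is $(-1)^{\Vert\alpha\Vert}\,\#\nu^1(\alpha)\,\alpha^{-1}\ell_m^r$. The decisive geometric feature is that the output $\alpha^{-1}\ell_m^r$ winds around $m$, continuing the rotation from where $\alpha$ stopped so as to complete $r$ full turns. This output is then fed into $\mu$ as its rightmost (lowest-index) argument, alongside $\alpha_{m+1},\dots,\alpha_k$.

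Next I would determine when $\mu^{\geq 3}\big(\alpha_k,\dots,\alpha_{m+1},\nu(\alpha_m,\dots,\alpha_1)\big)$ is nonzero. Since the rightmost input winds around $m$, the product fires exactly when $\alpha_{m+1},\dots,\alpha_k$ together with a suitable truncation of $\alpha^{-1}\ell_m^r$ bound an immersed disk, so that $\mu$ returns the leftover first-out piece via the rule $\mu^k(\alpha_k,\dots,\alpha_1\gamma)=(-1)^{|\gamma|}\gamma$ (or an all-in identity when the winding closes exactly). Recording how far the outer rotation $\alpha_{m+1},\dots,\alpha_k$ reaches and where the base arc $h(\alpha_k)=t(\alpha_1)$ sits relative to the inner disk splits the analysis into the four global shapes drawn in partitions 1--4 of the preceding table; these are precisely the sub-cases (a)--(d) of the lemma. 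In each shape I would compute the resulting scalar and sign. Using that reduced degrees of a disk sequence sum to even parity, the prefactor $(-1)^{\Vert\alpha\Vert}$ collapses to $(-1)^{\Vert\alpha_1\Vert+\dots+\Vert\alpha_m\Vert}$, which is the sign predicted by the surrounding middle-split rule, so that the value is a scalar multiple of $\id$ (or of the out-piece $\gamma$) weighted by $\#\nu^1$ of the magic angle indicated by the grey ring.

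Finally I would match each of the four computed values against its entry in partitions 1--4. The partner terms in those rows are evaluated by the companion $\mu(\nu)$, $\mu^2(\nu)$ and $\nu(\mu^2)$ computations referenced there, and the remark preceding the table shows, via the grey-ring parity criterion (every ray from $m$ meets an even number of magic-angle rings), that the signed sum of the magic-angle weights vanishes; hence each triple cancels. The main obstacle I anticipate is the geometric bookkeeping of the second step: isolating, for each of the four shapes, the unique decomposition $\alpha^{-1}\ell_m^r=(\text{disk-closing part})\cdot\gamma$ that makes the outer $\mu$ fire, and keeping the winding count consistent so that the magic angle produced here coincides with the ring assigned to the partner terms. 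By contrast the sign tracking is routine once the parity identity above is invoked, and the only genuine case distinction is whether the outer sequence closes on a short angle $<\ell^r$ or on a full $\ell^r$, and whether the base arc lies inside or outside the inner disk.
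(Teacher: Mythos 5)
Your proposal follows the paper's own route: identify that the end-split $\nu$ outputs an angle winding around $m$ which completes the outer $\mu$-disk, split into the four configurations of partitions 1--4 of the table (these are exactly the paper's cases (a)--(d)), and cancel each against the partner terms listed there using the magic-angle parity criterion and the even-parity sign identity for disk sequences. The only imprecision is that your description of how the outer $\mu$ fires mentions only the first-out rule $\mu^k(\alpha_k,\dots,\alpha_1\gamma)=(-1)^{|\gamma|}\gamma$ and the all-in case, whereas the paper's cases (b) and (c) have the outer $\mu$ final-out (the extra piece sits on $\alpha_k$, and the (b)/(c) split is governed by whether that result part is shorter or longer than $\alpha_1$); since you defer to the table's four shapes for the actual case division, this does not change the argument.
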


\begin{proof}
Since we assumed the inner $ ν $ to be end-split with turning angle $ < ℓ^r $, its result is some angle $ α $ winding around $ m $. Then $ α $ forms a disk sequence together with $ α_{m+1}, …, α_k $. In order to find the terms canceling the contribution, we distinguish the following cases: (a) The angle $ α $ is an ordinary interior angle for the outer $ μ $, and the outer $ μ $ is all-in. (b) The angle $ α $ is an ordinary interior angle for the outer $ μ $, the outer $ μ $ is final-out, and the result part of $ α_k $ is shorter than $ α_1 $. (c) The angle $ α $ is an ordinary interior angle for the outer $ μ $, the outer $ μ $ is final-out, and the result part of $ α_k $ is longer than $ α_1 $. (d) The angle $ α $ is a first-out angle. See \autoref{fig:hochschild-construction-mu-nu-end-small-mid}.

Regard case (a). Then we have the triple cancellation
\begin{equation*}
μ^{≥3} (…, ν^{≥2} (α_m, …, α_1)) + μ^{≥3} (ν^{≥2} (α_k, …, α_{m+1}), α_m, …, α_1) + ν^{≥2} (…, μ^2 (α_{m+1}, α_m), …). 
\end{equation*}

Regard case (b). Then $ α_k $ includes a $ γ $ at the front, but is short. It splits the input sequence of $ ν $ into a “small” disk sequence and a remaining “big” part. We get the cancellation
\begin{equation*}
μ^{≥3} (…, ν^{≥2} (α_m, …, α_1)) + μ^{≥3} (ν^{≥2} (α_k, …, α_t), …, α_1) + ν^{≥2} (…, μ^2 (α_{m+1}, α_m), …).
\end{equation*}

Regard case (c). Then the long $ α_k $ angle makes it possible to create another inner $ ν $, the third one in the following cancellation:
\begin{equation*}
μ^{≥3} (…, ν^{≥2} (α_m, …, α_1)) + μ^2 (ν^{≥2} (α_k, …), α_1) + ν^{≥2} (…, μ^2 (α_{m+1}, α_m), …). 
\end{equation*}

Regard case (d). Then we can apply $ ν $ to both disks. Their sum gets compensated by combining the two disks:
\begin{equation*}
μ^{≥3} (…, ν^{≥2} (α_m, …, α_1)) + μ^{≥3} (ν^{≥2} (α_k, …, α_{m+1}), α_m, …, α_1) + ν^{≥2} (…, μ^2 (α_{m+1}, α_m), …). 
\end{equation*}
\end{proof}

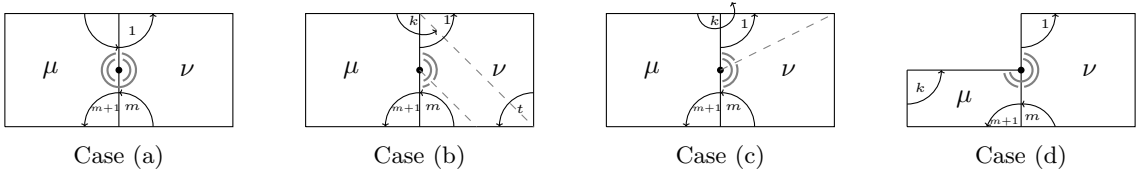
\begin{figure}
\centering
\begin{subfigure}{0.24\linewidth}
\centering
\begin{tikzpicture}[scale=1.5]
\path[use as bounding box] (0, 0) -- (2, 1);
\path[draw] (0, 0) -- ++(right:1) coordinate (bot) coordinate[pos=0.7] (2-end) -- ++(right:1) coordinate[pos=0.3] (4-start) -- ++(up:1) -- ++(left:1) coordinate (top) coordinate[pos=0.7] (3-end) -- ++(left:1) coordinate[pos=0.3] (1-start) -- ++(down:1);
\path[draw] (top) -- (bot) coordinate[pos=0.35] (A) coordinate[pos=0.65] (B) coordinate[midway] (q) coordinate[pos=0.3] (1-end) coordinate[pos=0.7] (2-start);
\path[fill] (q) circle[radius=0.03];
\path[draw, -{To[scale=0.5]}, bend right=45] (1-start) to (1-end);
\path[draw, -{To[scale=0.5]}, bend right=45] (2-start) to node[near start, below] {\scalebox{0.75}{$ \scriptscriptstyle m+1 $}} (2-end);
\path[draw, -{To[scale=0.5]}, bend right=45] (1-end) to node[near start, above] {\tiny $ 1 $} (3-end);
\path[draw, -{To[scale=0.5]}, bend right=45] (4-start) to node[near end, below] {\tiny $ m $} (2-start);
\path[draw, thick, gray] (q) ++(110:0.1) arc(110:250:0.1);
\path[draw, thick, gray] (q) ++(290:0.1) arc(290:430:0.1);
\path[draw, thick, gray] (q) ++(100:0.15) arc(100:440:0.15);
\path (0.4, 0.5) node {$ μ $};
\path (1.6, 0.5) node {$ ν $};
\end{tikzpicture}
\caption*{Case (a)}
\end{subfigure}
\begin{subfigure}{0.24\linewidth}
\centering
\begin{tikzpicture}[scale=1.5]
\path[use as bounding box] (0, 0) -- (2, 1);
\path[draw] (0, 0) -- ++(right:1) coordinate (bot) coordinate[pos=0.7] (2-end) -- ++(right:1) coordinate[pos=0.3] (4-start) coordinate[pos=0.7] (5-end) -- ++(up:1) coordinate[pos=0.3] (5-start) -- ++(left:1) coordinate (top) coordinate[pos=0.7] (3-end) -- ++(left:1) coordinate[pos=0.2] (1-start) -- ++(down:1);
\path[draw] (top) -- (bot) coordinate[pos=0.35] (A) coordinate[pos=0.65] (B) coordinate[midway] (q) coordinate[pos=0.2] (1-end) coordinate[pos=0.7] (2-start) coordinate[pos=0.3] (3-start);
\path[fill] (q) circle[radius=0.03];
\path[draw, dashed, gray] (top) -- (2, 0) coordinate[pos=0.15] (1-end-new);
\path[draw, dashed, gray] (q) -- (1.5, 0);
\path[draw, -{To[scale=0.5]}, bend right=60] (1-start) to node[pos=0.15, right] {\tiny $ k $} (1-end-new);
\path[draw, -{To[scale=0.5]}, bend right=45] (2-start) to node[near start, below] {\scalebox{0.75}{$ \scriptscriptstyle m+1 $}} (2-end);
\path[draw, -{To[scale=0.5]}, bend right=45] (3-start) to node[pos=0.6, above] {\tiny $ 1 $} (3-end);
\path[draw, -{To[scale=0.5]}, bend right=45] (4-start) to node[near end, below] {\tiny $ m $} (2-start);
\path[draw, -{To[scale=0.5]}, bend right=45] (5-start) to node[near start, below] {\tiny $ t $} (5-end);
\path[draw, thick, gray] (q) ++(290:0.1) arc(290:430:0.1);
\path[draw, thick, gray] (q) ++(280:0.15) arc(280:305:0.15);
\path[draw, thick, gray] (q) ++(325:0.15) arc(325:440:0.15);
\path (0.4, 0.5) node {$ μ $};
\path (1.7, 0.5) node {$ ν $};
\end{tikzpicture}
\caption*{Case (b)}
\end{subfigure}
\begin{subfigure}{0.24\linewidth}
\centering
\begin{tikzpicture}[scale=1.5]
\path[use as bounding box] (0, 0) -- (2, 1);
\path[draw] (0, 0) -- ++(right:1) coordinate (bot) coordinate[pos=0.7] (2-end) -- ++(right:1) coordinate[pos=0.3] (4-start) coordinate[pos=0.7] (5-end) -- ++(up:1) coordinate[pos=0.3] (5-start) -- ++(left:1) coordinate (top) coordinate[pos=0.7] (3-end) -- ++(left:1) coordinate[pos=0.2] (1-start) -- ++(down:1);
\path[draw] (top) -- (bot) coordinate[pos=0.35] (A) coordinate[pos=0.65] (B) coordinate[midway] (q) coordinate[pos=0.2] (1-end) coordinate[pos=0.7] (2-start) coordinate[pos=0.3] (3-start);
\path[fill] (q) circle[radius=0.03];
\path[draw, dashed, gray] (q) -- (2, 1);
\path[draw, -{To[scale=0.5]}, bend right=100, looseness=2] (1-start) to node[pos=0.1, right] {\tiny $ k $} ++(0.3, 0.1);
\path[draw, -{To[scale=0.5]}, bend right=45] (2-start) to node[near start, below] {\scalebox{0.75}{$ \scriptscriptstyle m+1 $}} (2-end);
\path[draw, -{To[scale=0.5]}, bend right=45] (3-start) to node[pos=0.5, above] {\tiny $ 1 $} (3-end);
\path[draw, -{To[scale=0.5]}, bend right=45] (4-start) to node[near end, below] {\tiny $ m $} (2-start);
\path[draw, thick, gray] (q) ++(280:0.1) arc(280:440:0.1);
\path[draw, thick, gray] (q) ++(280:0.15) arc(280:380:0.15);
\path[draw, thick, gray] (q) ++(395:0.15) arc(395:445:0.15);
\path (0.4, 0.5) node {$ μ $};
\path (1.6, 0.5) node {$ ν $};
\end{tikzpicture}
\caption*{Case (c)}
\end{subfigure}
\begin{subfigure}{0.24\linewidth}
\centering
\begin{tikzpicture}[scale=1.5]
\path[use as bounding box] (0, 0) -- (2, 1);
\path[draw] (0, 0) -- ++(right:1) coordinate (bot) coordinate[pos=0.7] (2-end) -- ++(right:1) coordinate[pos=0.3] (4-start) -- ++(up:1) -- ++(left:1) coordinate (top) coordinate[pos=0.7] (3-end) ++(left:1) coordinate[pos=0.2] (1-start) ++(down:0.5) -- ++(down:0.5) coordinate[pos=0.6] (5-start);
\path[draw] (top) -- (bot) coordinate[pos=0.35] (A) coordinate[pos=0.65] (B) coordinate[midway] (q) coordinate[pos=0.2] (1-end) coordinate[pos=0.8] (2-start) coordinate[pos=0.3] (3-start);
\path[draw] (q) -- (0, 0.5) coordinate[pos=0.7] (5-end);
\path[fill] (q) circle[radius=0.03];
\path[draw, -{To[scale=0.5]}, bend right=35] (2-start) to node[pos=0.4, below] {\scalebox{0.75}{$ \scriptscriptstyle m+1 $}} (2-end);
\path[draw, -{To[scale=0.5]}, bend right=45] (3-start) to node[pos=0.5, above] {\tiny $ 1 $} (3-end);
\path[draw, -{To[scale=0.5]}, bend right=35] (4-start) to node[near end, below] {\tiny $ m $} (2-start);
\path[draw, -{To[scale=0.5]}, bend right=45] (5-start) to node[near start, above] {\tiny $ k $} (5-end);
\path[draw, thick, gray] (q) ++(-170:0.1) arc(-170:80:0.1);
\path[draw, thick, gray] (q) ++(-170:0.15) arc(-170:-100:0.15);
\path[draw, thick, gray] (q) ++(-80:0.15) arc(-80:80:0.15);
\path (0.5, 0.25) node {$ μ $};
\path (1.6, 0.5) node {$ ν $};
\end{tikzpicture}
\caption*{Case (d)}
\end{subfigure}
\caption{Cancellation for \autoref{th:hochschild-construction-mu-nu-end-small-mid}. Magic angles of each contribution to $ dν $ are drawn as gray rings around $ q $. From these figures one deduces cancellation: Each sector around $ q $ appears exactly an even number of times, here 0 or 2. Inspection shows that overlapping contributions indeed appear with opposite sign.}
\label{fig:hochschild-construction-mu-nu-end-small-mid}
\end{figure}

\begin{lemma}
\label{th:hochschild-construction-mu-nu-end-small-start}
A contribution $ μ^{≥3} (\underbrace{…}_{≥1}, ν^{≥2} (…), \underbrace{…}_{≥1}) $ with $ ν $ end-split with turning angle $ < ℓ^r $ cancels.
\end{lemma}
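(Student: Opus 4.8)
The plan is to mirror the proof of \autoref{th:hochschild-construction-mu-nu-end-small-mid}; the only structural difference is that the inner $ν$ now sits strictly in the interior of the outer $μ$, with at least one angle on either side. First I would unwind the notation. Since the inner $ν$ is end-split with turning angle $α < ℓ^r$, applied to some interior block $α_{n+1}, …, α_m$, its value is $(-1)^{‖α‖} \# ν^1 (α)\, α^{-1} ℓ^r_m$ by \autoref{def:even-construction-def}, a genuine angle winding around $m$. Feeding this into the outer $μ$, the flanking angles on both sides force the total configuration to be an immersed disk wrapping around the puncture $m$: the block together with its turning angle $α$ accounts for the winding, and the relation $α · (α^{-1} ℓ^r) = ℓ^r$ records that the output completes the missing part of $r$ full turns.

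The key observation is that this single disk can be reassembled in exactly two alternative ways, each trading ``apply $ν$ to the interior block, then $μ$'' for ``compose two adjacent boundary angles with $μ^2$ at a seam, then apply $ν$ in middle-split mode''. There are precisely two seams: the top seam between the block and the left flank, producing $ν(…, μ^2 (α_{m+1}, α_m), …)$, and the bottom seam between the block and the right flank, producing $ν(…, μ^2 (α_{n+1}, α_n), …)$. Both partners are middle-split evaluations governed by \autoref{th:hochschild-construction-nu-mu2}, and they are exactly the two companions listed in the fifth row of the partition table above. I would then compare scalar coefficients: the end-split evaluation is weighted by $\# ν^1 (α)$, whereas the two middle-split evaluations are weighted by $\# ν^1$ of their respective splitting (magic) angles. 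By additivity of $\# ν^1$ along angles these three weights telescope, and the parity statement of the remark preceding the partition table (every ray away from $m$ meets an even number of magic-angle rings) guarantees that each indecomposable weight cancels.

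It then remains to verify that the three signs agree. As in the sibling lemma I expect a short split into subcases according to whether the outer $μ$ is all-in, final-out or first-out, and which flank is the short one; the four cases (a)--(d) of \autoref{th:hochschild-construction-mu-nu-end-small-mid} carry over essentially verbatim, the second flank merely contributing the second $μ^2$ seam term. The unified sign pattern recorded in the remark following \autoref{def:even-construction-def} does the work: the end-split sign $(-1)^{‖α‖}$ equals $(-1)^{‖α_{n+1}‖ + … + ‖α_m‖}$ because the block plus turning angle is a disk sequence of even total reduced degree, and this matches the middle-split signs $(-1)^{‖α_1‖ + … + ‖α_i‖}$ together with the Hochschild differential signs attached to the inner and outer $μ$ placements.

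The hard part will be the sign bookkeeping across the three distinct rules — end-split, middle-split, and the two $μ$-placements — combined with confirming that the two-seam decomposition is genuinely exhaustive, i.e.\ that no further term of $dν$ hits the same disk and that both seams always yield a valid, nonzero middle-split $ν$ evaluation. This is exactly the point where the case analysis of \autoref{th:hochschild-construction-mu-nu-end-small-mid} must be adapted rather than simply quoted.
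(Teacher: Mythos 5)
Your proposal matches the paper's proof: the paper disposes of this case with exactly the triple cancellation you identify, namely the term itself together with the two seam contractions $ν^{≥2}(…, μ^2(α_{m+1}, α_m), …)$ and $ν^{≥2}(…, μ^2(α_{n+1}, α_n), …)$, as recorded in row 5 of the partition table. Your anticipated case split mirroring the sibling lemma turns out to be unnecessary — having at least one flanking angle on each side forces the inner $ν$'s output to be an ordinary interior angle of the outer disk, so the single triple suffices — but this over-caution does not affect correctness.
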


\begin{proof}
Write $ μ^{≥3} (α_k, …, ν^{≥2} (α_m, …), α_n, …, α_1) $. See \autoref{fig:hochschild-construction-mu-nu-end-small-start-end}. We have a triple cancellation
\begin{equation*}
μ^{≥3} (…, ν^{≥2} (…), …) + ν^{≥2} (…, μ^2 (α_{m+1}, α_m), …) + ν^{≥2} (…, μ^2 (α_{n+1}, α_n), …). 
\end{equation*}
\end{proof}

\begin{figure}
\centering
\begin{subfigure}{0.24\linewidth}
\centering
\begin{tikzpicture}[scale=1.5]
\path[use as bounding box] (0, 0) -- (2, 1);
\path[draw] (0, 0) -- ++(right:1) coordinate (bot) coordinate[pos=0.7] (2-end) -- ++(right:1) coordinate[pos=0.3] (4-start) -- ++(up:1) -- ++(left:1) coordinate (top) coordinate[pos=0.7] (3-end) -- ++(left:1) coordinate[pos=0.3] (1-start) -- ++(down:1) coordinate[pos=0.65] (5-start) coordinate[pos=0.35] (5-end);
\path[draw] (top) -- (bot) coordinate[pos=0.35] (A) coordinate[pos=0.65] (B) coordinate[midway] (q) coordinate[pos=0.3] (1-end) coordinate[pos=0.7] (2-start);
\path[fill] (q) circle[radius=0.03];
\path[draw, gray, dashed] (q) -- (0, 1);
\path[draw, -{To[scale=0.5]}, bend right=90, looseness=2] (5-start) to node[midway, left] {\tiny $ 1 $} (5-end);
\path[draw, -{To[scale=0.5]}, bend right=45] (1-start) to node[near end, above] {\tiny $ n $} (1-end);
\path[draw, -{To[scale=0.5]}, bend right=45] (2-start) to node[near start, below] {\scalebox{0.75}{$ \scriptscriptstyle m+1 $}} (2-end);
\path[draw, -{To[scale=0.5]}, bend right=45] (1-end) to node[near start, above] {\scalebox{0.75}{$ \scriptscriptstyle n+1 $}} (3-end);
\path[draw, -{To[scale=0.5]}, bend right=45] (4-start) to node[near end, below] {\tiny $ m $} (2-start);
\path[draw, thick, gray] (q) ++(-80:0.1) arc(-80:140:0.1);
\path[draw, thick, gray] (q) ++(-85:0.15) arc(-85:80:0.15);
\path[draw, thick, gray] (q) ++(100:0.15) arc(100:145:0.15);
\path (0.4, 0.5) node {$ μ $};
\path (1.6, 0.5) node {$ ν $};
\end{tikzpicture}
\caption*{\autoref{th:hochschild-construction-mu-nu-end-small-start}}
\end{subfigure}
\begin{subfigure}{0.24\linewidth}
\centering
\begin{tikzpicture}[scale=1.5]
\path[use as bounding box] (0, 0) -- (2, 1);
\path[draw] (0, 0) -- ++(right:1) coordinate (bot) coordinate[pos=0.8] (2-end) -- ++(right:1) coordinate[pos=0.3] (4-start) -- ++(up:1) -- ++(left:1) coordinate (top) coordinate[pos=0.7] (3-end) -- ++(left:1) coordinate[pos=0.3] (1-start) -- ++(down:1);
\path[draw] (top) -- (bot) coordinate[pos=0.35] (A) coordinate[pos=0.65] (B) coordinate[midway] (q) coordinate[pos=0.3] (1-end) coordinate[pos=0.7] (2-start);
\path[fill] (q) circle[radius=0.03];
\path[draw, -{To[scale=0.5]}, bend right=45] (1-start) to node[near end, above] {\tiny $ m $} (1-end);
\path[draw, -{To[scale=0.5]}, bend right=100, looseness=2.5] (2-end) ++(0.25, -0.1) to node[pos=0.3, left] {\tiny $ 1 $} (2-end);
\path[draw, -{To[scale=0.5]}, bend right=45] (1-end) to node[near start, above] {\scalebox{0.75}{$ \scriptscriptstyle m+1 $}} (3-end);
\path[draw, -{To[scale=0.5]}, bend right=45] (4-start) to node[pos=0.5, below] {\tiny $ k $} (2-start);
\path[draw, thick, gray] (q) ++(110:0.1) arc(110:250:0.1);
\path[draw, thick, gray] (q) ++(290:0.1) arc(290:430:0.1);
\path[draw, thick, gray] (q) ++(-80:0.15) arc(-80:260:0.15);
\path (0.4, 0.5) node {$ μ $};
\path (1.6, 0.5) node {$ ν $};
\end{tikzpicture}
\caption*{\autoref{th:hochschild-construction-mu-nu-end-small-end}: Case (a)}
\end{subfigure}
\begin{subfigure}{0.24\linewidth}
\centering
\begin{tikzpicture}[scale=1.5]
\path[use as bounding box] (0, 0) -- (2, 1);
\path[draw] (0, 0) ++(right:1) coordinate (bot) -- ++(right:1) coordinate[pos=0.3] (4-start) -- ++(up:1) -- ++(left:1) coordinate (top) coordinate[pos=0.7] (3-end) -- ++(left:1) coordinate[pos=0.3] (1-start) -- ++(down:0.5) coordinate[pos=0.4] (5-end);
\path[draw] (top) -- (bot) coordinate[pos=0.35] (A) coordinate[pos=0.65] (B) coordinate[midway] (q) coordinate[pos=0.3] (1-end) coordinate[pos=0.7] (2-start);
\path[fill] (q) circle[radius=0.03];
\path[draw] (q) -- (0, 0.5) coordinate[pos=0.7] (5-start);
\path[draw, -{To[scale=0.5]}, bend right=45] (5-start) to node[near end, below] {\tiny $ 1 $} (5-end);
\path[draw, -{To[scale=0.5]}, bend right=45] (1-start) to node[near end, above] {\tiny $ m $} (1-end);
\path[draw, -{To[scale=0.5]}, bend right=45] (1-end) to node[near start, above] {\scalebox{0.75}{$ \scriptscriptstyle m+1 $}} (3-end);
\path[draw, -{To[scale=0.5]}, bend right=45] (4-start) to node[near end, below] {\tiny $ k $} (2-start);
\path[draw, thick, gray] (q) ++(-80:0.1) arc(-80:170:0.1);
\path[draw, thick, gray] (q) ++(-80:0.15) arc(-80:80:0.15);
\path[draw, thick, gray] (q) ++(100:0.15) arc(100:170:0.15);
\path (0.4, 0.7) node {$ μ $};
\path (1.6, 0.5) node {$ ν $};
\end{tikzpicture}
\caption*{Case (b)}
\end{subfigure}
\caption{Cancellation for \autoref{th:hochschild-construction-mu-nu-end-small-start} and \autoref{th:hochschild-construction-mu-nu-end-small-end}}
\label{fig:hochschild-construction-mu-nu-end-small-start-end}
\end{figure}

\begin{lemma}
\label{th:hochschild-construction-mu-nu-end-small-end}
A contribution $ μ^{≥3} (ν^{≥2} (…), …) $ with $ ν $ end-split with turning angle $ < ℓ^r $ cancels.
\end{lemma}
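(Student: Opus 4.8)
The plan is to treat this as the left-handed mirror of \autoref{th:hochschild-construction-mu-nu-end-small-mid}. First I would unwind the inner evaluation. Since the inner $\nu$ is end-split with turning angle $\alpha < \ell^r$, applied to a block $\alpha_k, \ldots, \alpha_{m+1}$ for which $\alpha_{m+1}, \ldots, \alpha_k, \alpha$ is a disk sequence wrapping $m$ exactly $r$ times, its value is $(-1)^{\|\alpha\|}\#\nu^1(\alpha)\,\alpha^{-1}\ell_m^r$, an angle winding around $m$ whose magic angle is $\ell_m^r$. This winding angle now occupies the leading (leftmost) slot of the outer $\mu^{\geq 3}$, the remaining inputs being $\alpha_m, \ldots, \alpha_1$, so that the outer product glues a second disk onto the first along the arc where $\alpha_m$ meets $\alpha_{m+1}$. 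Because the leading angle winds around $m$, this second disk is forced to wrap $m$ as well, and the two corner angles at $m$ multiply to $\alpha\cdot\alpha^{-1}\ell_m^r = \ell_m^r$, i.e.\ the combined figure covers $m$ exactly $r$ times.

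Next I would run the case distinction according to how $\alpha^{-1}\ell_m^r$ seats as the leading angle of this second disk: whether the outer $\mu$ is all-in or final-out, and whether the winding-back part stops before or reaches past the gluing corner. These sub-cases reproduce, read from the opposite side, the four configurations (a)--(d) of \autoref{th:hochschild-construction-mu-nu-end-small-mid}, and they are exactly the shapes recorded in rows 1, 4 and 6 of the partition table above. In every case the contribution completes to a triple: one partner is obtained by sliding the split index, turning the evaluation either into a right-hand inner $\nu$ of \autoref{th:hochschild-construction-mu-nu-end-small-mid}-type or into an outsourced term $\mu^2(\alpha_k, \nu(\alpha_{k-1}, \ldots, \alpha_1))$, and the other partner is the term $\nu(\ldots, \mu^2(\alpha_{m+1}, \alpha_m), \ldots)$ in which the inner $\mu^2$ fuses the two angles at the gluing corner, merging the two disks back into a single one.

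To see that each triple cancels I would invoke the magic-angle bookkeeping explained in the remark before the table: the three contributions carry the input scalar $\#\nu^1$ of their respective magic angles, and cancellation of scalars reduces to the observation that every indecomposable angle around $m$ lies in an even number (here zero or two) of those magic angles, visible as the grey rings in the figures, since every ray from $m$ crosses an even number of rings. It then remains to match the three $\mu$- and $\nu$-signs, using that $\|\ell_m^r\|$ is odd and that the Hochschild prefactor $(-1)^{\|\mu\|\cdots}$ absorbs the degree of the angle entering the fused $\mu^2$. The main obstacle I expect is precisely this sign check in the final-out analogues, where a genuine $\beta$ detaches at the front of the outer $\mu$: there one must verify that the reduced degrees consumed by the middle-split sign $(-1)^{\|\alpha_1\| + \cdots + \|\alpha_i\|}$ in the $\nu(\mu^2)$ partner agree with those produced by the outer $\mu^{\geq 3}$, so that the scalars add to zero rather than merely cancelling in absolute value. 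I would isolate this as a short sign computation and otherwise defer the routine parts to the figures, as is done for the companion lemmas.
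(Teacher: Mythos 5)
Your proposal follows essentially the same route as the paper: the inner end-split $\nu$ yields an angle winding around $m$ that seats as the final angle of the outer disk, and each configuration cancels in a triple with either a right-hand inner $\nu$ or the outsourced term $\mu^2(\alpha_k, \nu(\alpha_{k-1},\ldots,\alpha_1))$, always accompanied by $\nu(\ldots,\mu^2(\alpha_{m+1},\alpha_m),\ldots)$ --- exactly the paper's cases and table rows 1, 4, 6. Only note that the sequence $\alpha_{m+1},\ldots,\alpha_k,\alpha$ is a plain disk sequence covering no punctures (it is the \emph{union} of the two glued disks that covers $m$ exactly $r$ times, as you correctly state afterwards), and the paper organizes this lemma into three cases rather than a literal mirror of the four cases of \autoref{th:hochschild-construction-mu-nu-end-small-mid}.
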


\begin{proof}
Since we assumed the inner $ ν $ to be end-split with turning angle $ < ℓ^r $, its result is some angle $ α $ winding around $ m $. Then $ α $ forms a disk sequence together with $ α_1, …, α_m $. By assumption, $ α $ is necessarily an ordinary or the final out angle for this disk. In order to find the terms canceling the contribution, we distinguish the following cases: (a) The angle $ α $ is an ordinary interior angle for the outer disk sequence, and $ α_1 $ reaches outside the orbigon at its tail. (b) The angle $ α $ is a final-out angle. (c) The angle $ α $ is an ordinary interior angle, and $ α_1 $ does not reach outside.

Regard case (a), where $ α $ is an ordinary interior angle of the outer $ μ $. Then we have the triple cancellation
\begin{equation*}
μ^{≥3} (ν^{≥2} (α_k, …, α_{m+1}), α_m, …, α_1) + μ^2 (α_k, ν(α_{k-1}, …, α_1)) + ν(…, μ^2 (α_{m+1}, α_m), …).
\end{equation*}
Regard case (b), where $ α $ is a final-out angle of the outer $ μ $. In particular, the first angle $ α_1 $ of the outer $ μ $ has no $ β $ appended. We have the triple cancellation
\begin{equation*}
μ^{≥3} (ν^{≥2} (α_k, …, α_{m+1}), …) + μ^{≥3} (…, ν^{≥2} (α_m, …, α_1)) + ν^{≥2} (…, μ^2 (α_{m+1}, α_m), …). 
\end{equation*}
Regard case (c). Then we have the cancellation
\begin{equation*}
μ(ν(α_k, …, α_{m+1}), …) + μ(…, ν(α_t, …, α_1)) + ν(…, μ^2 (α_{m+1}, α_m), …).
\end{equation*}
\end{proof}

\begin{lemma}
\label{th:hochschild-construction-mu-nu-end-long}
A contribution $ μ^{≥3} (…, ν^{≥2} (…), …) $ with $ ν $ old- or new-era end-split with turning angle $ ℓ^r $ cancels.
\end{lemma}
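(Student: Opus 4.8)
The plan is to cancel each such contribution against a single term of the form $\nu^{\geq 2}(\ldots, \mu^{\geq 3}(\ldots), \ldots)$ from the other half of the differential $d\nu = [\mu, \nu] = \mu\cdot\nu - \nu\cdot\mu$; the relative sign here is $+1$ because $\nu$ is even and $\mu$ is odd. First I would record what the inner $\nu$ returns. In the old-era case, on $(\alpha_k, \ldots, \alpha_{m}\beta)$ with $\alpha_m, \ldots, \alpha_k, \ell^r$ a disk sequence, it returns $-\#\nu^1(\ell^r)\,\beta$, a multiple of the tail extension $\beta$; in the new-era case, on $(\gamma\alpha_k, \ldots, \alpha_m\beta)$ it returns $-\#\nu^1(n)\,\gamma\beta$, where $n$ is the indecomposable angle clockwise-adjacent to the shared head/tail arc. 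Since $\mu^{\geq 3}$ annihilates identities by strict unitality, the outer product survives only if this returned angle occurs as a genuine interior angle of a second disk sequence carried by the remaining inputs. Geometrically this is the configuration of two disks meeting along the head/tail arc of the inner $\nu$, with exactly $r$ surplus turns around $m$ absorbed into $\ell^r$; compare the figure accompanying entry~7 of the partition table.

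Next I would produce the partner. Applying $\mu^{\geq 3}$ first to the inputs that build the second disk, and then the outer $\nu$ (which again winds $\ell^r$ about $m$), yields a term $\nu(\ldots, \mu(\ldots), \ldots)$ treated in \autoref{th:hochschild-construction-nu-mu-end-old-long}; in the representative situation of entry~7 this is $\nu(\alpha_k, \ldots, \mu(\alpha_m, \ldots, \alpha_1))$. Both terms name the same folded pair of disks read in the two possible orders, so they have a common target. I would organize the verification by a short case split recording (i) whether the inner $\nu$ is old- or new-era and (ii) whether its output sits as an ordinary interior angle, a first-out, or a final-out angle of the outer disk; these cases are in bijection with those of the partner lemma.

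It remains to compare coefficients and signs. By additivity of $\#\nu^1$ and the ray-counting criterion of the remark preceding the table, the magic angles of the two paired terms coincide ($\ell^r$ in the old-era case, $n$ in the new-era case), so the two $\#\nu^1$-coefficients are literally equal. For the signs I would use that the Gerstenhaber sign $(-1)^{(\cdots)\|\nu\|}$ on the $\mu\cdot\nu$ side is trivial, as $\|\nu\|$ is even, while on the $\nu\cdot\mu$ side the sign $(-1)^{\|\alpha_1\| + \cdots}$ over the inputs to the right of the inner $\mu$ is compensated, using that $\|\ell^r\|$ is odd, by the built-in $-1$ of the old-era rule, respectively by the prescribed ordering $\gamma\beta$ of the new-era rule. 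The residual computation then shows the two signed summands are equal, whence they cancel in $d\nu = \mu\cdot\nu - \nu\cdot\mu$.

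The step I expect to be the main obstacle is exactly this sign bookkeeping in the new-era case, where the inner $\nu$ returns the composite $\gamma\beta$ rather than a single angle, so that the Koszul sign interacts both with the order in which $\gamma$ and $\beta$ are appended and with the parity of the intervening angles. A subsidiary point to settle is that each contribution is matched only once --- in particular that it is not simultaneously captured by the lemmas for end-split $\nu$ with turning angle $<\ell^r$ or by the middle-split lemmas --- so that the cancellation is exact rather than merely possible.
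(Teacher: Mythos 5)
There is a genuine gap: your blanket strategy of pairing each contribution $μ^{≥3}(…, ν^{≥2}(…), …)$ with a \emph{single} partner $ν^{≥2}(…, μ^{≥3}(…), …)$ only works in a minority of the configurations. The paper's proof splits into five cases, and only in case (c) (old-era $ν$ sitting as the final-out angle of the outer $μ$) and in the first-out subcase of (e) does the cancellation happen by swapping the order of $μ$ and $ν$ as you describe — this is the configuration recorded as entry~7 of the partition table, which is the only one you reference. In the remaining cases — old-era $ν$ not in final position (a), old-era $ν$ in final position with $μ$ all-in or first-out (b), new-era $ν$ with $β$ appended (d), and most of (e) — the angle adjacent to the $ν$-output winds around $m$, and prolonging it by $ℓ^r$ turns the whole input into a \emph{parking garage sequence}. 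There the contribution does not cancel against one partner: it is one of a larger family of terms (the characteristic $μ(…, ν^1(α), …)$ term, several inner end-split $ν$ evaluations around the spiral, and the top/bottom inner $μ$ terms) whose signed sum vanishes only as a whole, as established in the separate parking-garage lemma of \autoref{sec:even-parking}. The paper states this explicitly: some terms ``cancel away only as a whole.'' Your proposed partner term in these cases either does not exist or does not have a matching coefficient, so the pairwise bookkeeping you plan to carry out would break down.

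Concretely, in case (a) write the contribution as $μ^{≥3}(α_k, …, ν^{≥2}(α_m, …, α_{n+1}), …)$; the angle $α_{m+1}$ immediately following the $ν$-output winds around $m$, which forces a nonzero $μ(…, ν^1(α_{m+1}), …)$ term and a cascade of further inner end-split $ν$ terms — none of which appears in your two-term ledger. Your sign analysis and the uniqueness check you flag at the end are reasonable concerns for the genuinely pairwise case (c), but the main missing idea is the reduction of the other cases to the parking-garage mechanism rather than to a swap of $μ$ and $ν$.
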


\begin{proof}
We distinguish cases: (a) The $ ν $ is old-era and is not the final angle in $ μ $. (b) The $ ν $ is old-era and is the final angle in $ μ $, and $ μ $ is all-in or first-out. (c) The $ ν $ is old-era and is the final angle in $ μ $, and $ μ $ is final-out. (d) The $ ν $ is new-era and has $ β $ appended. (e) The $ ν $ is new-era without $ β $.

Regard case (a). Label the angles as $ μ^{≥3} (α_k, …, ν^{≥2} (α_m, …, α_{n+1}), …) $. Then the next angle $ α_{m+1} $ after $ ν $ winds around $ m $. Prolonging it by $ ℓ^r $ gives precisely a disk sequence, in other words the sequence is a parking garage sequence. 

Regard case (b). Then $ α_1 $ is the first angle of $ μ $ and winds around $ m $. Prolonging it by $ ℓ^r $ gives a disk sequence and we have a parking garage again.

Regard case (c). Label the angles as $ μ^{≥3} (ν^{≥2} (α_k, …, α_m), …, α_1) $. Then we can swap the order in which $ μ $ and $ ν $ are applied:
\begin{equation*}
μ^{≥3} (ν^{≥2} (α_k, …, α_m), …, α_1) + ν^{≥2} (α_k, …, μ^{≥3} (α_m, …, α_1)).
\end{equation*}

Regard case (d). Then the split of $ ν $ necessarily divides the outer $ μ $ disk into two. The angle $ α_t $ where the split touches the opposite boundary of the $ μ $ disk creates a contribution $ μ^{≥3} (…, ν^1 (α_t), …) $. We have a garage sequence.

Case (e) is similar to the combination of (a), (b) and (c): If there is an angle before $ ν $, then we can apply $ ν^1 $ and have a garage sequence. If there is no angle before $ ν $, we have a contribution $ μ^{≥3} (…, ν^{≥2} (…)) $. It this is an all-in or final-out $ μ $, then the final angle $ α_k $ winds around $ m $ and we have a parking garage. If this is a first-out $ μ $, then we can swap the order of $ μ $ and $ ν $ again.
\end{proof}

\begin{figure}
\centering
\begin{subfigure}{0.24\linewidth}
\centering
\begin{tikzpicture}[scale=1.5]
\path[draw] (0, 0) -- ++(right:1) coordinate[pos=0.4] (4-start) -- ++(up:2) -- ++(left:2) -- ++(down:2) -- ++(right:1) coordinate[pos=0.6] (1-end);
\path[draw] (0, 0) -- (0, 1) coordinate (q) coordinate[pos=0.7] (2-start) coordinate[pos=0.4] (4-end);
\path[fill] (q) circle[radius=0.05];
\path[draw, dashed, gray] (q) -- (1, 2) coordinate[pos=0.35] (2-end);
\path[draw, dashed, gray] (0, 0) -- (1, 1) coordinate[pos=0.2] (1-start);
\path (1, 2) -- (1, 1) node[midway, sloped, above] {…};
\path[draw, -{To[scale=0.5]}, bend right=60] (1-start) to node[midway, below] {\scalebox{1}{$ \scriptscriptstyle n+1 $}} (1-end);
\path[draw, -{To[scale=0.5]}, bend right=65, looseness=1.5] (2-start) to node[pos=0.2, above] {\scalebox{1}{$ \scriptscriptstyle m+1 $}} (2-end);
\path[draw, -{To[scale=0.5]}, bend right=45] (4-start) to node[pos=0.45, below] {\tiny $ m $} (4-end);
\path[bend left, decorate, decoration={text along path, text align=center, text={|\tiny|parking garage}}] ($ (q) + (-0.5, -0.8) $) to ($ (q) + (0.3, 0.8) $);
\path (0.7, 1.1) node {$ μ $};
\path (-0.7, 1.6) node {$ ν $};
\end{tikzpicture}
\caption*{Case (a)}
\end{subfigure}
\begin{subfigure}{0.24\linewidth}
\centering
\begin{tikzpicture}[scale=1.5]
\path[draw] (0, 0) -- ++(right:1) coordinate[pos=0.4] (4-start) -- ++(up:2) -- ++(left:2) -- ++(down:2) -- ++(right:1) coordinate[pos=0.6] (1-end);
\path[draw] (0, 0) -- (0, 1) coordinate (q) coordinate[pos=0.7] (2-start) coordinate[pos=0.4] (4-end);
\path[fill] (q) circle[radius=0.05];
\path[draw, dashed, gray] (q) -- (1, 2) coordinate[pos=0.25] (2-end);
\path[draw, dashed, gray] (0, 0) -- (1, 1) coordinate[pos=0.2] (1-start);
\path (1, 2) -- (1, 1) node[midway, sloped, above] {…};
\path[draw, -{To[scale=0.5]}, bend right=60] (1-start) to node[midway, below] {\scalebox{1}{$ \scriptscriptstyle n+1 $}} (1-end);
\path[draw, -{To[scale=0.5]}, bend right=80, looseness=1] ($ (2-start) + (-0.2, -0.1) $) to node[midway, above] {\tiny $ 1 $} (2-end);
\path[draw, -{To[scale=0.5]}, bend right=45] (4-start) to node[pos=0.45, below] {\tiny $ k $} (4-end);
\path[bend left, decorate, decoration={text along path, text align=center, text={|\tiny|parking garage}}] ($ (q) + (-0.5, -0.8) $) to ($ (q) + (0.3, 0.8) $);
\path (0.6, 1.1) node {$ μ $};
\path (-0.7, 1.6) node {$ ν $};
\end{tikzpicture}
\caption*{Case (b)}
\end{subfigure}
\begin{subfigure}{0.24\linewidth}
\centering
\begin{tikzpicture}[scale=1.5]
\path[draw] (0, 0) -- ++(right:1) coordinate[pos=0.4] (4-start) -- ++(up:2) -- ++(left:2) -- ++(down:2) -- ++(right:1) coordinate[pos=0.6] (1-end);
\path[draw] (0, 0) -- (0, 1) coordinate (q) coordinate[pos=0.4] (4-end);
\path[fill] (q) circle[radius=0.05];
\path[draw, dashed, gray] (0, 0) -- (1, 2) coordinate[pos=0.25] (2-end) coordinate[pos=0.85] (2-start);
\path[draw, dashed, gray] (0, 0) -- (1, 0.5) coordinate[pos=0.2] (1-start) coordinate[pos=0.8] (3-end);
\path (1, 2) -- (1, 0.5) node[midway, sloped, above] {…};
\path[draw, -{To[scale=0.75]}, bend right=60, looseness=1.5] (1-start) to node[pos=0.55, below] {$ \scriptscriptstyle n+1 $} (1-end);
\path[draw, -{To[scale=0.75]}, bend right=80, looseness=1] (2-start) to node[midway, above] {\small $ 1 $} ($ (2-start) + (0.4, 0.2) $);
\path[draw, -{To[scale=0.75]}, bend right=45] (4-start) to node[pos=0.45, below] {\small $ k $} (4-end);
\path[draw, -{To[scale=0.75]}, bend right=60] ($ (3-end) + (0.3, 0.3) $) to node[pos=0.45, below] {\small $ n $} (3-end);
\path[draw, thick, gray] (q) ++(-80:0.1) arc(-80:260:0.1);
\path[draw, thick, gray] (q) ++(-80:0.15) arc(-80:260:0.15);
\path (0.6, 0.7) node {$ μ $};
\path (-0.5, 1.2) node {$ ν $};
\end{tikzpicture}
\caption*{Case (c)}
\end{subfigure}
\begin{subfigure}{0.24\linewidth}
\centering
\begin{tikzpicture}[scale=1.5]
\path[draw] (0, 0) -- ++(right:1) coordinate[pos=0.4] (4-start) -- ++(up:2) -- ++(left:2) -- ++(down:2) -- ++(right:1) coordinate[pos=0.6] (1-end);
\path[draw] (0, 0) -- (0, 1) coordinate (q) coordinate[pos=0.4] (4-end);
\path[fill] (q) circle[radius=0.05];
\path[draw, dashed, gray] (0, 0) -- (1, 1) coordinate[pos=0.2] (1-start);
\path[draw, dashed, gray] (0, 0) -- (-1, 1) coordinate[pos=0.3] (4-end);
\path[draw, dashed, gray] ($ (q) + (-0.5, 0.1) $) -- (q) coordinate[pos=0.5] (5-start) -- ($ (q) + (0.5, 0.1) $) coordinate[pos=0.5] (5-end);
\path[bend right, decorate, decoration={text along path, text align=center, text={|\tiny|garage}}] ($ (q) + (-0.5, 0.3) $) to ($ (q) + (0.5, 0.3) $);
\path[draw, -{To[scale=0.5]}, bend right=60] (1-start) to (1-end);
\path[draw, -{To[scale=0.5]}, bend right=60] (4-start) to (4-end);
\path[draw, -{To[scale=0.5]}, bend right=80, looseness=1.5] (5-start) to (5-end);
\path (0.4, 0.6) node {$ μ $};
\path (0, 1.6) node {$ ν $};
\end{tikzpicture}
\caption*{Case (d)}
\end{subfigure}
\caption{Cancellation for \autoref{th:hochschild-construction-mu-nu-end-long}}
\end{figure}

\begin{lemma}
\label{th:hochschild-construction-mu-nu-middle-outsource}
A contribution $ μ^{≥3} (ν^{≥2} (…), …) $ with middle-split first-out $ ν $ and first-out $ μ $ cancels.
\end{lemma}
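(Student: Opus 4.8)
The plan is to evaluate the inner, middle-split first-out $ν$ in closed form and then, configuration by configuration, read off from the partition table which further term(s) of $dν$ it cancels against. By \autoref{def:even-construction-def}, if $ν^{≥2}$ is applied to a middle-split first-out sequence $α_m, …, α_1β$, there is a unique splitting index $i$ with $α_1, …, α_i, ℓ^r, α_{i+1}, …, α_m$ a disk sequence, and the output is $(-1)^{‖α_1‖+…+‖α_i‖}\,\#ν^1(α)\,β$, where $α$ is the splitting angle of $(i, α_i, α_{i+1})$ inside the splitting set $I_s$. Since the outer $μ$ is first-out, it merely transports this output, so the entire contribution is $\#ν^1(α)$ times a basis morphism, and the whole problem collapses to matching a single signed scalar against its partners.

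First I would set up the global geometry. The inner $ν$-disk, with $ℓ^r$ reinserted at the split, together with the outer $μ$-disk are glued along the arc carrying the angle produced by $ν$, and the glued region winds around $m$. Where the gluing arc sits on the boundary of the combined region, and whether that region admits one or two admissible insertions of $ℓ^r$, fixes the shape of $α_1, …, α_k$. This produces exactly the five configurations recorded as entries $8$, $2$, $9$, $10$ and $28$ of the partition list, which correspond to cases (a)--(e) of the present lemma. In case (a) the contribution pairs with a single $ν(…, μ(α_m, …, α_1))$ term; in cases (b)--(e) it sits inside a triple, together with a second $μ(ν(…))$ or $μ^2(α_k, ν(…))$ term and an outsourcing term of the form $ν(…, μ^2(α_{s+1}, α_s), …)$. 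The correct partner is always the one dictated by the grey-ring pictures of those entries: every ray issuing from $m$ meets an even number of magic-angle rings, so that by additivity of the input scalars the signed $\#ν^1$'s telescope to zero.

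The second step is the sign ledger, and for this I would feed in three facts: the middle-split sign $(-1)^{‖α_1‖+…+‖α_i‖}$, which by the even parity of reduced degrees along any disk sequence may be rewritten as a sign supported only on the angles strictly between $\min I_s$ and the chosen split; the Koszul sign from the Gerstenhaber product underlying $dν = [μ, ν]$; and the additivity $\#ν^1(α_k…α_1) = \#ν^1(α_k)+…+\#ν^1(α_1)$, which splits the magic angle of this contribution into the two magic angles of its partners exactly as in the parking-garage identity $\#ν^1(\anglebot)+\#ν^1(\anglemid)+\#ν^1(\angletop) = \#ν^1(α)+\#ν^1(ℓ^r)$. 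In each case I expect the result to reduce to the literal cancellation of two or three signed scalars, in the same spirit as \autoref{th:hochschild-construction-mu-nu-end-small-mid}.

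The main obstacle will be the triples (d) and (e). Each of these involves the $μ^2$-outsourcing partner $μ^2(α_k, ν(α_{k-1}, …, α_1))$, whose admissible split is shifted by one polygon sector from the one underlying this contribution. Consequently the identification of magic angles across the three terms is not the identity but a shift by a single sector around $m$, and reconciling this shift with the extra sign carried by the odd element $ℓ_m^r$ is where the bookkeeping is most delicate. In all remaining cases the cancellation is the same boundary-telescoping of $\#ν^1$ already used for the end-split contributions.
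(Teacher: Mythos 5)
Your proposal follows the paper's own route: the same five configurations (matched correctly to entries 8, 2, 9, 10 and 28 of the partition table, i.e.\ cases (a)--(e)), with a pair cancellation in case (a) and triple cancellations involving the contraction term $\nu(\dots,\mu^2(\alpha_{s+1},\alpha_s),\dots)$ or $\nu(\mu^2(\alpha_k,\alpha_{k-1}),\dots)$ in the remaining cases, all verified through additivity of the input scalars on the magic angles. The only cosmetic difference is that you parametrize the case division by the position of the gluing arc and the admissible $\ell^r$-insertions rather than by the length and endpoint of the result part of the first-out angle relative to the inner $\nu$-disk, but this lands on the identical classification and the identical cancelling partners.
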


\begin{proof}
Label the angles as $ μ^{≥3} (ν^{≥2} (α_k, …, α_{m+1}), α_m, …) $. We distinguish cases: (a) The result part of $ α_1 $ is shorter than the corresponding interior angle of $ ν $, it ends before the $ m $ puncture and cuts the $ ν $ piece into two. (b) The result part of $ α_1 $ is shorter than the corresponding interior angle of the $ ν $, and it ends at the $ m $ puncture in $ ν $. (c) The result part of $ α_1 $ is shorter than the corresponding interior angle of $ ν $, it ends after the $ m $ puncture and cuts the $ ν $ piece into two at some angle $ α_t $, and the split is neither at the end of the sequence ($ s+1 = k $) nor at the cut ($ t = s+1 $). (d) The result part of $ α_1 $ is longer than the corresponding interior angle of $ ν $. (e) As in (c), but the split being at the end or at the cut.

Regard case (a). Write $ α_t $ for the angle where the result part of $ α_1 $ hits the $ ν $ sequence. We have a cancellation
\begin{equation*}
μ^{≥3} (ν^{≥2} (α_k, …, α_{m+1}), α_m, …) + ν^{≥2} (…, μ^{≥3} (α_t, …, α_1)).
\end{equation*}
Regard case (b). Write $ α_s, α_{s+1} $ for the angles where the split happens. The split gives rise to an end-split $ ν(α_s, …, α_1) $ and its result fills up the remaining angle from $ α_{s+1} $ to $ α_k $, giving a contribution $ μ^{≥3} (α_k, …, α_{s+1}, ν^{≥2} (α_s, …, α_1)) $. Together with $ μ^2 (α_{s+1}, α_s) $ contraction, this provides a cancellation
\begin{equation*}
μ^{≥3} (ν^{≥2} (α_k, …, α_{m+1}), α_m, …) + ν^{≥2} (…, μ^2 (α_{s+1}, α_s), …) + μ^{≥3} (α_k, …, α_{s+1}, ν^{≥2} (α_s, …, α_1)).
\end{equation*}
Regard case (c). Write $ α_t $ for the angle where the result part of $ α_1 $ hits the $ ν $ sequence. We have a cancellation
\begin{equation*}
μ^{≥3} (ν^{≥2} (α_k, …, α_{m+1}), α_m, …) + μ^{≥3} (…, ν^{≥2} (α_t, …, α_1)) + ν^{≥2} (…, μ^2 (α_{s+1}, α_s), …).
\end{equation*}
Note in case the result part of $ α_1 $ has no arc going to $ m $, the last term vanishes and the first two already cancel out. If the result part of $ α_1 $ however has an arc going to $ m $, then the third term precisely compensates for the difference in magic angle between the first two terms.

Regard case (d). We have a cancellation
\begin{equation*}
μ^{≥3} (ν^{≥2} (α_k, …, α_{m+1}), α_m, …) + ν^{≥2} (…, μ^2 (α_{s+1}, α_s), …) + μ^2 (α_k, ν^{≥2} (α_{k-1}, …, α_1)).
\end{equation*}

Regard case (e). We have a cancellation
\begin{equation*}
μ(ν(α_k, …, α_{m+1}), …) + μ^2 (α_k, ν(α_{k-1}, …, α_1)) + ν(…, μ^2 (α_{s+1}, α_s), …).
\end{equation*}
\end{proof}

\begin{figure}
\centering
\begin{subfigure}{0.24\linewidth}
\centering
\begin{tikzpicture}
\path[use as bounding box] (-2, 0) -- (1, 2);
\path[draw] (0, 0) -- ++(right:1) coordinate[pos=0.4] (4-start) -- ++(up:2) -- ++(left:2) coordinate (corner-top) coordinate[pos=0.3] (3-end) coordinate[pos=0.7] (3-start) coordinate[pos=0.5] (t) coordinate[pos=0.8] (2-end) -- ++(left:1) coordinate[pos=0.6] (2-start) -- ++(down:2) -- ++(right:1) coordinate (corner-bot) coordinate[pos=0.7] (1-end) -- ++(up:2) coordinate[pos=0.2] (7-end);
\path[draw] (corner-bot) -- ++(right:1) coordinate[pos=0.6] (5-end) coordinate[pos=0.4] (7-start) -- ++(up:1) coordinate (q) coordinate[pos=0.4] (4-end);
\path[draw, dashed, gray] (corner-bot) -- (t) coordinate[pos=0.15] (1-start);
\path[draw, dashed, gray] (q) -- (0, 2);
\path[fill] (q) circle[radius=0.05];
\path[draw, -{To[scale=0.75]}, bend right=45] (1-start) to node[pos=0.8, right, shift={(0, 0.02)}] {\small $ 1 $} (1-end);
\path[draw, -{To[scale=0.75]}, bend right=45] (4-start) to node[pos=0.7, below] {\small $ s $} (4-end);
\path[draw, -{To[scale=0.75]}, bend right=45] (4-end) to node[pos=0.3, below] {$ \scriptscriptstyle s+1 $} (5-end);
\path[draw, -{To[scale=0.75]}, bend right=80, looseness=1.5] (2-start) to node[pos=0.4, above] {$ \scriptscriptstyle m+1 $} (2-end);
\path[draw, -{To[scale=0.75]}, bend right=80, looseness=1.5] (3-start) to node[pos=0.55, above] {\small $ t $} (3-end);
\path[draw, -{To[scale=0.75]}, bend right=45] (7-start) to node[pos=0.25, left, shift={(0.04, 0)}] {\small $ k $} (7-end);
\path[draw, very thick, gray] (q) ++(-80:0.13) arc(-80:80:0.13);
\path[draw, very thick, gray] (q) ++(-85:0.2) arc(-85:85:0.2);
\path (-1.5, 1) node {$ μ $};
\path (0.5, 1) node {$ ν $};
\end{tikzpicture}
\caption*{Case (a)}
\end{subfigure}
\begin{subfigure}{0.24\linewidth}
\centering
\begin{tikzpicture}
\path[use as bounding box] (-2, 0) -- (1, 2);
\path[draw] (0, 0) -- ++(right:1) coordinate[pos=0.4] (4-start) -- ++(up:2) -- ++(left:2) coordinate (corner-top) coordinate[pos=0.3] (3-end) coordinate[pos=0.7] (3-start) coordinate[pos=0.5] (t) coordinate[pos=0.8] (2-end) -- ++(left:1) coordinate[pos=0.6] (2-start) -- ++(down:2) -- ++(right:1) coordinate (corner-bot) coordinate[pos=0.7] (1-end) -- ++(up:2) coordinate[pos=0.3] (7-end);
\path[draw] (corner-bot) -- ++(right:1) coordinate[pos=0.6] (5-end) coordinate[pos=0.4] (7-start) -- ++(up:1) coordinate (q) coordinate[pos=0.4] (4-end);
\path[draw, dashed, gray] (corner-bot) -- (q) coordinate[pos=0.2] (1-start);
\path[draw, dashed, gray] (q) -- (0, 2);
\path[fill] (q) circle[radius=0.05];
\path[draw, -{To[scale=0.75]}, bend right=60, looseness=1.5] (1-start) to node[pos=0.8, right] {\small $ 1 $} (1-end);
\path[draw, -{To[scale=0.75]}, bend right=45] (4-start) to node[pos=0.7, below] {\small $ s $} (4-end);
\path[draw, -{To[scale=0.75]}, bend right=45] (4-end) to node[pos=0.3, below] {$ \scriptscriptstyle s+1 $} (5-end);
\path[draw, -{To[scale=0.75]}, bend right=80, looseness=1.5] (2-start) to node[pos=0.4, above] {$ \scriptscriptstyle m+1 $} (2-end);
\path[draw, -{To[scale=0.75]}, bend right=80, looseness=1.5] (3-start) to node[pos=0.55, above] {\small $ t $} (3-end);
\path[draw, -{To[scale=0.75]}, bend right=45] (7-start) to node[pos=0.55, left] {\small $ k $} (7-end);
\path[draw, very thick, gray] (q) ++(-80:0.13) arc(-80:215:0.13);
\path[draw, very thick, gray] (q) ++(-85:0.2) arc(-85:85:0.2);
\path[draw, very thick, gray] (q) ++(95:0.2) arc(95:220:0.2);
\path (-1.5, 1) node {$ μ $};
\path (0.5, 1) node {$ ν $};
\end{tikzpicture}
\caption*{Case (b)}
\end{subfigure}
\begin{subfigure}{0.24\linewidth}
\centering
\begin{tikzpicture}
\path[use as bounding box] (-2, 0) -- (1, 2);
\path[draw] (0, 0) -- ++(right:1) -- ++(up:2) coordinate[pos=0.05] (6-end) coordinate[pos=0.3] (6-start) coordinate[pos=0.4] (5-end) coordinate[pos=0.7] (4-start) -- ++(left:2) coordinate (corner-top) coordinate[pos=0.3] (3-end) coordinate[pos=0.7] (3-start) coordinate[pos=0.5] (t) coordinate[pos=0.8] (2-end) -- ++(left:1) coordinate[pos=0.6] (2-start) -- ++(down:2) -- ++(right:1) coordinate (corner-bot) coordinate[pos=0.7] (1-end) -- ++(up:2) coordinate[pos=0.3] (7-end);
\path[draw] (corner-bot) -- ++(right:1) coordinate[pos=0.4] (7-start) ++(up:1) coordinate (q) -- ++(right:1) coordinate[pos=0.6] (4-end);
\path[draw, dashed, gray] (corner-bot) -- (1, 0.3) coordinate[pos=0.15] (1-start);
\path[draw, dashed, gray] (q) -- ++(up:1);
\path[draw, dashed, gray] (corner-bot) -- (q);
\path[fill] (q) circle[radius=0.05];
\path[draw, -{To[scale=0.75]}, bend right=90, looseness=1.5] (1-start) to node[pos=0.85, right] {\small $ 1 $} (1-end);
\path[draw, -{To[scale=0.75]}, bend right=45] (4-start) to node[pos=0.2, below] {\small $ s $} (4-end);
\path[draw, -{To[scale=0.75]}, bend right=60, looseness=1.5] (4-end) to node[pos=0.7, above] {$ \scriptscriptstyle s+1 $} (5-end);
\path[draw, -{To[scale=0.75]}, bend right=80, looseness=1.5] (2-start) to node[pos=0.4, above] {$ \scriptscriptstyle m+1 $} (2-end);
\path[draw, -{To[scale=0.75]}, bend right=80, looseness=2] (6-start) to node[pos=0.2, below] {\small $ t $} (6-end);
\path[draw, -{To[scale=0.75]}, bend right=45] (7-start) to node[pos=0.55, left] {\small $ k $} (7-end);
\path[draw, very thick, gray] (q) ++(10:0.13) arc(10:215:0.13);
\path[draw, very thick, gray] (q) ++(5:0.2) arc(5:85:0.2);
\path[draw, very thick, gray] (q) ++(95:0.2) arc(95:220:0.2);
\path (-1.5, 1) node {$ μ $};
\path (-0.6, 1) node {$ ν $};
\end{tikzpicture}
\caption*{Case (c)}
\end{subfigure}
\begin{subfigure}{0.24\linewidth}
\centering
\begin{tikzpicture}
\path[use as bounding box] (-2, 0) -- (1, 2);
\path[draw] (0, 0) -- ++(right:1) coordinate[pos=0.4] (4-start) -- ++(up:2) -- ++(left:2) coordinate (corner-top) coordinate[pos=0.3] (3-end) coordinate[pos=0.7] (3-start) coordinate[pos=0.5] (t) coordinate[pos=0.8] (2-end) -- ++(left:1) coordinate[pos=0.6] (2-start) -- ++(down:2) -- ++(right:1) coordinate (corner-bot) coordinate[pos=0.7] (1-end) -- ++(up:2) coordinate[pos=0.3] (7-end);
\path[draw] (corner-bot) -- ++(right:1) coordinate[pos=0.6] (5-end) coordinate[pos=0.4] (7-start) -- ++(up:1) coordinate (q) coordinate[pos=0.4] (4-end);
\path[draw, dashed, gray] (corner-bot) -- (q) coordinate[pos=0.2] (1-start);
\path[draw, dashed, gray] (q) -- (0, 2);
\path[fill] (q) circle[radius=0.05];
\path[draw, -{To[scale=0.75]}, bend right=90, looseness=2.5] ($ (1-end) + (0.5, -0.2) $) to (1-end);
\path (corner-bot) node[above left, shift={(0.15, -0.1)}] {\small $ 1 $};
\path[draw, -{To[scale=0.75]}, bend right=45] (4-start) to node[pos=0.7, below] {\small $ s $} (4-end);
\path[draw, -{To[scale=0.75]}, bend right=45] (4-end) to node[pos=0.3, below] {$ \scriptscriptstyle s+1 $} (5-end);
\path[draw, -{To[scale=0.75]}, bend right=80, looseness=1.5] (2-start) to node[pos=0.4, above] {$ \scriptscriptstyle m+1 $} (2-end);
\path[draw, -{To[scale=0.75]}, bend right=80, looseness=1.5] (3-start) to node[pos=0.55, above] {\small $ t $} (3-end);
\path[draw, -{To[scale=0.75]}, bend right=45] (7-start) to node[pos=0.55, left] {\small $ k $} (7-end);
\path[draw, very thick, gray] (q) ++(-80:0.13) arc(-80:215:0.13);
\path[draw, very thick, gray] (q) ++(-85:0.2) arc(-85:85:0.2);
\path[draw, very thick, gray] (q) ++(95:0.2) arc(95:220:0.2);
\path (-1.5, 1) node {$ μ $};
\path (0.5, 1) node {$ ν $};
\end{tikzpicture}
\caption*{Case (d)}
\end{subfigure}
\caption{Cancellation for \autoref{th:hochschild-construction-mu-nu-middle-outsource}}
\label{fig:hochschild-construction-mu-nu-middle-outsource}
\end{figure}

\begin{lemma}
\label{th:hochschild-construction-mu-nu-mid-split}
A contribution $ μ^{≥3} (…, ν^{≥2} (…), …) $ with middle-split $ ν $ cancels.
\end{lemma}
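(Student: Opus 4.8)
The plan is to follow the template established by the preceding lemmas \autoref{th:hochschild-construction-mu-nu-end-small-mid}--\autoref{th:hochschild-construction-mu-nu-middle-outsource}: fix the angle sequence on which $d\nu$ acts, isolate the term $\mu^{\geq 3}(\ldots, \nu^{\geq 2}(\ldots), \ldots)$ whose inner evaluation is of middle-split type, and then display the one or two further terms of $d\nu$ with which it cancels. First I would unwind the middle-split rule of \autoref{def:even-construction-def}: writing the inner sequence as $\alpha_1,\ldots,\gamma\alpha_k$ (or $\alpha_1\beta,\ldots,\alpha_k$) with split index $0<i<k$, the inner $\nu$ evaluates to $(-1)^{\|\alpha_1\|+\cdots+\|\alpha_i\|}\,\#\nu^1(\alpha)\,\gamma$, where $\alpha$ is the splitting angle of $(i,\alpha_i,\alpha_{i+1})$. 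Geometrically the inner $\nu$ folds an orbigon along $\ell^r$ at the index $i$ and emits the short overhang $\gamma$ (or $\beta$), which the outer $\mu$ then reuses as one of its inputs.

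Next I would organize the argument into sub-cases (a), (b) (and the remaining configurations that occur in triples) according to how the emitted out-angle sits inside the outer $\mu$ and how far the outer disk winds around $m$, paralleling the case split of \autoref{th:hochschild-construction-mu-nu-middle-outsource}. Depending on the sub-case the term cancels either in a pair or in a triple, and the canceling partners are drawn from three prototypes: a boundary-contraction term $\nu^{\geq 2}(\ldots,\mu^2(\alpha_{i+1},\alpha_i),\ldots)$ which changes the split index and hence shifts the magic angle; a swap term $\nu^{\geq 2}(\ldots,\mu^{\geq 3}(\ldots))$ obtained by applying the inner product first; and a second outer-product term $\mu^{\geq 3}(\nu^{\geq 2}(\ldots),\ldots)$ realising the same orbigon by splitting at a different admissible index. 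A representative triple has the shape
\[
\mu^{\geq 3}(\ldots, \nu^{\geq 2}(\ldots), \ldots) + \mu^{\geq 3}(\nu^{\geq 2}(\ldots), \ldots) + \nu^{\geq 2}(\ldots, \mu^2(\alpha_{i+1}, \alpha_i), \ldots).
\]
These are precisely the pairings and triples recorded against this lemma in rows 9, 11, 12 and 21 of the partition table, cross-referenced to \autoref{th:hochschild-construction-mu-nu-middle-outsource}, \autoref{th:hochschild-construction-nu-mu-mid} and \autoref{th:hochschild-construction-nu-mu2}.

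To see that the scalar coefficients cancel I would invoke the magic-angle criterion used throughout the section: since $\#\nu^1$ is additive on composable angles, it suffices to check that every indecomposable angle around $m$ lies inside an even number of magic angles of the participating terms, which is visible from the gray rings in the corresponding figures (every ray from $m$ meets an even number of rings). The hard part will be the sign bookkeeping. The middle-split rule carries the sign $(-1)^{\|\alpha_1\|+\cdots+\|\alpha_i\|}$, and I must verify that, once combined with the Koszul sign of the Gerstenhaber product and the factor $(-1)^{|\beta|}$ in $\mu^2$, the two terms realising the same orbigon by neighbouring splits acquire opposite overall sign while the contraction term matches. The decisive input here is the same parity fact used earlier, that the reduced degrees along a disk sequence sum to even parity, so that moving the split index by one consumes exactly the reduced degree of $\alpha_i$ and flips the sign. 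I would spell out the signs only in the generic sub-case and then observe that the boundary sub-cases, where the split sits at the end of the sequence or coincides with the contraction, degenerate into the already-treated end-split situations of \autoref{th:hochschild-construction-mu-nu-end-small-mid} and \autoref{th:hochschild-construction-mu-nu-end-small-end}.
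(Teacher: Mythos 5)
Your overall strategy --- classify by the position of the $\nu$-output inside the outer $\mu$, then cancel in pairs or triples using $\mu^2$-contractions, order swaps, and the additivity of $\#\nu^1$ on magic angles --- is the same as the paper's, and the paper's own proof here is itself only a sketch. But several of your concrete predictions about how the cancellation goes do not match what actually happens, and one of them would derail the argument. The paper restricts to first-out $\nu$ and distinguishes only two cases: (a) the $\nu$-result is not the final angle of $\mu$, cancelled by the \emph{pair} $\mu^{\geq 3}(\ldots,\nu^{\geq 2}(\alpha_m,\ldots,\alpha_{n+1}),\ldots)+\nu^{\geq 2}(\ldots,\mu^2(\alpha_{m+1},\alpha_m),\ldots)$; (b) the $\nu$-result is the final angle and $\mu$ is all-in or final-out, cancelled by the \emph{pair} obtained by swapping the order of $\mu$ and $\nu$. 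No triple with ``a second outer-product term realising the same orbigon by a different split'' appears in this lemma's own cancellations; the triples in table rows 9 and 21 arise because this lemma's term serves as a partner in cancellations organised by \autoref{th:hochschild-construction-mu-nu-middle-outsource} and \autoref{th:hochschild-construction-mu2-nu-mid-outsource-1}, not because this lemma itself produces three-term partitions.

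More importantly, your assertion that the contraction term $\nu^{\geq 2}(\ldots,\mu^2(\alpha_{i+1},\alpha_i),\ldots)$ ``changes the split index and hence shifts the magic angle'' is the opposite of the fact the pair cancellation in case (a) rests on: merging the angle inside the $\nu$-input with the adjacent outer angle only prolongs the $\nu$-input by a disk sequence, adds no new sectors around $m$, and therefore leaves the magic angle (and hence the coefficient $\#\nu^1(\alpha)$) unchanged. If the magic angle did shift, the two terms would carry different scalars and would not cancel as a pair. Finally, the leftover configuration (result of $\nu$ is the final angle and $\mu$ is first-out) is deferred to \autoref{th:hochschild-construction-mu-nu-middle-outsource}, not to the end-split lemmas \autoref{th:hochschild-construction-mu-nu-end-small-mid} and \autoref{th:hochschild-construction-mu-nu-end-small-end} as you suggest; those treat end-split $\nu$, which is a different rule of \autoref{def:even-construction-def} with different magic angles. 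You should correct the magic-angle claim, replace the representative triple by the two pair cancellations above, and reroute the boundary case to the middle-split/first-out lemma.
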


\begin{proof}
We carry out the inspection only in case $ ν $ is first-out. Distinguish cases: (a) The $ ν^{≥2} $ result is not the final angle for $ μ $. (b) The $ ν^{≥2} $ result is the final angle for $ μ $, and $ μ $ is all-in or final-out. The remaining case that the $ ν^{≥2} $ result is the final angle for $ μ $ and $ μ $ is first-out is the content of \autoref{th:hochschild-construction-mu-nu-middle-outsource}.

Regard case (a). We have a cancellation
\begin{equation*}
μ^{≥3} (…, ν^{≥2} (α_m, …, α_{n+1}), …) + ν^{≥2} (…, μ^2 (α_{m+1}, α_m), …). 
\end{equation*}
The magic angles of both $ ν $ terms are readily seen to be equal: The input sequence for the second $ ν $ is only prolonged by a disk sequence, hence has no additional sectors around $ m $.

Regard case (b). We can then swap the order in which we apply $ μ $ and $ ν $:
\begin{equation*}
μ^{≥3} (ν^{≥2} (…, α_{m+1}), α_m, …, α_1) + ν^{≥2} (…, μ(α_{m+1}, …, α_1)). 
\end{equation*}
\end{proof}

\begin{figure}
\centering
\begin{subfigure}{0.24\linewidth}
\centering
\begin{tikzpicture}
\path[use as bounding box] (-2, 0) -- (1, 2);
\path[draw] (0, 0) -- ++(right:1) coordinate[pos=0.4] (4-start) -- ++(up:2) -- ++(left:2) coordinate (corner-top) coordinate[pos=0.3] (3-end) coordinate[pos=0.7] (3-start) coordinate[pos=0.5] (t) coordinate[pos=0.8] (2-end) -- ++(left:1) coordinate[pos=0.6] (2-start) -- ++(down:2) -- ++(right:1) coordinate (corner-bot) coordinate[pos=0.4] (1-end) -- ++(up:2) coordinate[pos=0.2] (7-end);
\path[draw] (corner-bot) -- ++(right:1) coordinate[pos=0.6] (5-end) coordinate[pos=0.4] (7-start) -- ++(up:1) coordinate (q) coordinate[pos=0.4] (4-end);
\path[draw, dashed, gray] (q) -- (-0.5, 2);
\path[fill] (q) circle[radius=0.05];
\path[draw, -{To[scale=0.75]}, bend right=45] (7-end) to node[pos=0.3, below] {$ \scriptscriptstyle m+1 $} (1-end);
\path[draw, -{To[scale=0.75]}, bend right=45] (7-start) to node[pos=0.7, below] {\small $ m $} (7-end);
\path[draw, -{To[scale=0.75]}, bend right=45] (4-start) to node[pos=0.7, below] {\small $ s $} (4-end);
\path[draw, -{To[scale=0.75]}, bend right=45] (4-end) to node[pos=0.3, below] {$ \scriptscriptstyle s+1 $} (5-end);
\path[draw, -{To[scale=0.75]}, bend right=80, looseness=1.5] (2-start) to node[pos=0.4, above] {$ \scriptscriptstyle n+1 $} (2-end);
\path[draw, very thick, gray] (q) ++(-80:0.13) arc(-80:110:0.13);
\path[draw, very thick, gray] (q) ++(-85:0.2) arc(-85:115:0.2);
\path (-1.5, 1) node {$ μ $};
\path (0.5, 1) node {$ ν $};
\end{tikzpicture}
\caption*{Case (a)}
\end{subfigure}
\begin{subfigure}{0.24\linewidth}
\centering
\begin{tikzpicture}
\path[use as bounding box] (-2, 0) -- (1, 2);
\path[draw] (0, 0) -- ++(right:1) coordinate[pos=0.4] (4-start) -- ++(up:2) -- ++(left:2) coordinate (corner-top) coordinate[pos=0.3] (3-end) coordinate[pos=0.7] (3-start) coordinate[pos=0.5] (t) coordinate[pos=0.8] (2-end) -- ++(left:1) coordinate[pos=0.6] (2-start) -- ++(down:2) -- ++(right:1) coordinate (corner-bot) coordinate[pos=0.6] (1-end) -- ++(up:2) coordinate[pos=0.2] (7-end);
\path[draw] (corner-bot) -- ++(right:1) coordinate[pos=0.6] (5-end) coordinate[pos=0.4] (7-start) -- ++(up:1) coordinate (q) coordinate[pos=0.4] (4-end);
\path[draw, dashed, gray] (q) -- (-0.5, 2);
\path[fill] (q) circle[radius=0.05];
\path[draw, -{To[scale=0.75]}, bend right=45] (7-end) to node[pos=0.3, below] {\small $ 1 $} (1-end);
\path[draw, -{To[scale=0.75]}, bend right=45] (7-start) to node[pos=0.7, below] {\small $ k $} (7-end);
\path[draw, -{To[scale=0.75]}, bend right=45] (4-start) to node[pos=0.7, below] {\small $ s $} (4-end);
\path[draw, -{To[scale=0.75]}, bend right=45] (4-end) to node[pos=0.3, below] {$ \scriptscriptstyle s+1 $} (5-end);
\path[draw, -{To[scale=0.75]}, bend right=80, looseness=1.5] (2-start) to node[pos=0.4, above] {$ \scriptscriptstyle m+1 $} (2-end);
\path[draw, very thick, gray] (q) ++(-80:0.13) arc(-80:110:0.13);
\path[draw, very thick, gray] (q) ++(-85:0.2) arc(-85:115:0.2);
\path (-1.5, 1) node {$ μ $};
\path (0.5, 1) node {$ ν $};
\end{tikzpicture}
\caption*{Case (b)}
\end{subfigure}
\caption{Cancellation for \autoref{th:hochschild-construction-mu-nu-mid-split}}
\label{fig:hochschild-construction-mu-nu-mid-split}
\end{figure}

\begin{lemma}
\label{th:hochschild-construction-nu-mu-end-small}
$ d ν $ vanishes on any sequence that has a $ ν^{≥2} (…, μ^{≥3} (…), …) $ contribution, with $ ν $ end-split with turning angle $ < ℓ^r $.
\end{lemma}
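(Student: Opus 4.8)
The plan is to run the same term-by-term cancellation argument used for the $μ(…, ν(…), …)$ contributions in the preceding lemmas (\ref{th:hochschild-construction-mu-nu-end-small-mid}--\ref{th:hochschild-construction-mu-nu-mid-split}), but now with the roles of the two operations reversed, so that the outer operation is the constructed cochain $ν$ and the inner one is an honest higher product $μ^{≥3}$. By hypothesis the outer $ν$ is end-split with turning angle $α < ℓ^r$, so by \autoref{def:even-construction-def} its value on its input sequence $α_1, …, α_k$ is the scalar $(-1)^{‖α‖}\#ν^1(α)$ times the angle $α^{-1}ℓ_m^r$ winding around $m$. The inner $μ^{≥3}$ collapses a disk sub-sequence to a single boundary angle, which then occupies one slot of the input to $ν$. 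I would organise the proof by the position and type of this inner $μ$: whether its output lies strictly inside the $ν$-sequence, produces the top (leftmost) angle $α_k$, or produces the bottom (rightmost) angle $α_1$.

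The interior cases are quickly disposed of. When the inner $μ$ is all-in and interior (case (b)), respectively first-out and interior (case (d)), the contribution pairs off with a single $ν(…, μ^2(α_{m+1}, α_m), …)$ term obtained by contracting the boundary angle of $μ$ against its neighbour; the magic angle of the outer $ν$ is unchanged, since its input is only lengthened by a disk sequence and hence acquires no new sector around $m$. These two-term cancellations are exactly entries~14 and~17 of the partition table and rest on \autoref{th:hochschild-construction-nu-mu2}, with signs matching as in the odd case.

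The substance is in the extremal cases. When the inner $μ$ produces the bottom angle $α_1$ (case (c)) I would exhibit the triple
\begin{equation*}
ν(…, μ(α_m, …, α_1)) + μ(…, ν^1(α_1)) + μ^2(ν(α_k, …, α_2), α_1),
\end{equation*}
recorded as entry~16, in which the extremal angle is outsourced either by applying $ν^1$ to it or by multiplying it off with $μ^2$; and when it produces the top angle $α_k$ the mirror triple
\begin{equation*}
ν(μ(α_k, …, α_{m+1}), α_m, …) + μ(ν^1(α_k), …) + μ^2(α_k, ν(α_{k-1}, …, α_1)),
\end{equation*}
subdivided according to whether the extremal angle falls short of the full $r$-fold turn $ℓ_m^r$ (sub-case a3, entry~13) or reaches it exactly (sub-case a2, entry~26). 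The decisive point is that the three magic angles appearing in each triple differ only by the extremal sector, so by additivity of $\#ν^1$ on concatenations their signed scalar coefficients cancel. I expect the sign bookkeeping in these triples to be the main obstacle: it is precisely the borderline phenomenon flagged in the remark following \autoref{def:even-construction-def}, where appending $β$ and $γ$ to the extremal angle only becomes admissible once that angle has grown to exactly $ℓ_m^r$, which is what forces the a2/a3 split and the appearance of the outsourcing term $μ^2(α_k, ν(…))$. To keep the cancellations honest I would lean on the geometric bookkeeping from the remark preceding the partition table — drawing each magic angle as a grey ring about $m$ and checking that every ray from $m$ meets an even number of rings — and confirm through the table's cross-references that every partner term is itself accounted for and is hit exactly once, so that the collected partitions indeed give $dν = 0$ on these sequences.
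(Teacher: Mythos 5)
Your case split and your cancellation partners coincide with the paper's own proof: the interior cases pair off against a $ ν(…, μ^2(α_{m+1}, α_m), …) $ contraction (entries 14 and 17), and the two extremal positions are resolved by exactly the triples you write down, with the borderline subdivision at $ ℓ^r $ giving entries 13, 16 and 26. The magic-angle/grey-ring bookkeeping you invoke is also how the paper justifies that the signed scalars cancel. So the route is the same one the paper takes.

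However, there is a concrete omission. When the inner $ μ $ produces the extremal angle, you subdivide only into ``falls short of $ ℓ_m^r $'' and ``reaches it exactly.'' The third possibility — the extremal angle together with the turning angle of the outer $ ν $ winds \emph{more} than $ ℓ^r $ around $ m $ — does occur, and for it neither of your triples exists: the partner term $ μ^2(α_k, ν^{≥2}(α_{k-1}, …, α_1)) $ requires an end-split $ ν $ on $ α_1, …, α_{k-1} $, which is unavailable once that subsequence already winds past $ ℓ^r $. These are the paper's cases (a1) and (e), and they are not handled by any local pair or triple at all: one has to recognize the entire sequence as a (final-out, respectively first-out) parking garage sequence and appeal to the wholesale cancellation proved in \autoref{sec:even-parking}. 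Without routing those sequences to the parking-garage lemma, your partition of the terms of $ dν $ is incomplete and the claimed vanishing does not follow. A minor further remark: your interior cases should be labelled by whether the inner $ μ $ is first-out or final-out (with its result not at the corresponding end of the $ ν $-input), rather than ``all-in and interior''; an all-in inner $ μ $ outputs an identity morphism and contributes nothing here.
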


\begin{proof}
We distinguish cases: (a) The inner $ μ $ is first-out, and its result is used as final angle of $ ν $. (b) The inner $ μ $ is first-out, and its result is not used as final angle of $ ν $. (c) The inner $ μ $ is final-out and its result is used as first angle for $ ν $, and the turning angle of $ ν $ together with $ α_1 $ is less than or equal to $ ℓ^r $. (d) The inner $ μ $ is final-out and its result is not used as first angle for $ ν $. (e) As (c), but with angle together larger than $ ℓ^r $.

Regard case (a). Then the final angle $ α_k $ of the inner $ μ $ winds around $ m $. Distinguish (a1) $ α_k $ together with the turning angle of $ ν $ is bigger than $ ℓ^r $. (a2) $ α_k $ together with the turning angle is $ ℓ^r $. (a3) $ α_1 $ together with the turning angle is less than $ ℓ^r $.

Regard case (a1). Then the sequence $ α_1, …, α_{k-1} $ winds more than $ ℓ^r $ times around $ m $ and we simply have a final-out parking garage. Regard case (a2). Then we have an end-split contribution $ ν^{≥2} (α_{k-1}, …, α_1) $ with turning angle $ ≤ ℓ^r $, and find the cancellation
\begin{equation*}
ν^{≥2} (μ^{≥3} (α_k, …, α_{m+1}), α_m, …) + μ(ν^1 (α_k), …) + μ^2 (α_k, ν^{≥2} (α_{k-1}, …, α_1)). 
\end{equation*}
Case (a3) has the same cancellation as (a2).

Regard case (b). Then the result of the inner $ μ $ is not the final angle of the sequence. We can simply connect the final angle $ α_m $ of the inner $ μ $ with the next angle $ α_{m+1} $ of the outer $ ν $, producing a cancellation
\begin{equation*}
ν^{≥2} (…, α_{m+1}, μ^{≥3} (α_m, …, α_{n+1}), …) + ν^{≥2} (…, μ^2 (α_{m+1}, α_m), …). 
\end{equation*}

Case (c) is similar to (a). Indeed, $ α_1 $ winds around $ m $. By assumption $ α_1 $ together with the turning angle of $ ν $ is less than $ ℓ^r $. We have an end-split contribution $ ν^{≥2} (α_k, …, α_2) $ with turning angle $ ≤ ℓ^r $, and find the cancellation
\begin{equation*}
ν^{≥2} (…, μ^{≥3} (α_m, …, α_1)) + μ^{≥3} (…, ν^1 (α_1)) + μ^2 (ν^{≥2} (α_k, …, α_2), α_1). 
\end{equation*}
Case (d) is similar to (b). Case (e) is a parking garage sequence.
\end{proof}

\begin{figure}
\centering
\begin{subfigure}{0.24\linewidth}
\centering
\begin{tikzpicture}[scale=1.5]
\path[draw] (0, 0) -- ++(right:1) coordinate[pos=0.55] (1-start) -- ++(up:2) -- ++(left:2) -- ++(down:2) -- ++(right:1) coordinate[pos=0.6] (3-end);
\path[draw] (0, 0) -- (0, 1) coordinate (q) coordinate[pos=0.6] (2-end) coordinate[pos=0.4] (3-start);
\path[fill] (q) circle[radius=0.05];
\path[draw, dashed, gray] (q) -- (-1, 2) coordinate[pos=0.5] (2-start);
\path[draw, dashed, gray] (0, 0) -- (-1, 1) coordinate[pos=0.2] (1-end);
\path (-1, 2) -- (-1, 1) node[midway, sloped, below] {…};
\path[draw, -{To[scale=0.75]}, bend right=65] (1-start) to node[pos=0.5, below] {\scalebox{1}{$ \scriptstyle m+1 $}} (1-end);
\path[draw, -{To[scale=0.75]}, bend right=60] (2-start) to node[pos=0.2, right] {\small $ k $} (2-end);
\path[draw, -{To[scale=0.75]}, bend right=45] (3-start) to node[pos=0.5, below] {\small $ 1 $} (3-end);
\path (0.7, 1.1) node {$ ν $};
\path (-0.8, 1.2) node {$ μ $};
\path[draw, very thick, gray] (q) ++(260:0.1) arc(260:-80:0.1);
\path[draw, very thick, gray] (q) ++(260:0.15) arc(260:130:0.15);
\path[draw, very thick, gray] (q) ++(130:0.2) arc(130:360:0.2) coordinate (spiral-right);
\path[draw, very thick, gray] (spiral-right) arc(0:267:0.23 and 0.25);
\end{tikzpicture}
\caption*{Case (a)}
\end{subfigure}
\begin{subfigure}{0.24\linewidth}
\centering
\begin{tikzpicture}[scale=1.5]
\path[use as bounding box] (-1, -0.5) -- (1, 2);
\path[draw] (0, 0) -- ++(right:1) -- ++(up:2) -- ++(left:2) -- ++(down:2) coordinate[pos=0.85] (3-end) -- ++(right:1) coordinate[pos=0.4] (3-start);
\path[draw] (0, 0) -- (0, 1) coordinate (q) coordinate[pos=0.3] (1-start);
\path[fill] (q) circle[radius=0.05];
\path[draw] (0, 0) -- ++(down:0.5) coordinate[pos=0.5] (1-end) -- ++(left:1) node[midway, above] {…} -- ++(up:0.5) coordinate[pos=0.5] (2-start);
\path[draw, -{To[scale=0.75]}, bend right=90, looseness=2] (1-start) to node[pos=0.15, below] {$ \scriptscriptstyle n+1 $} (1-end);
\path[draw, -{To[scale=0.75]}, bend right=45] (2-start) to node[near start, above] {\tiny $ m $} (3-start);
\path[draw, -{To[scale=0.75]}, bend right=45] (3-start) to node[pos=0.7, below] {$ \scriptscriptstyle m+1 $} (3-end);
\path[draw, very thick, gray] (q) ++(260:0.1) arc(260:-80:0.1);
\path[draw, very thick, gray] (q) ++(265:0.15) arc(265:-85:0.15);
\path (-0.5, -0.2) node {$ μ $};
\path (0, 1.3) node {$ ν $};
\end{tikzpicture}
\caption*{Case (b)}
\end{subfigure}
\begin{subfigure}{0.24\linewidth}
\centering
\begin{tikzpicture}[scale=1.5]
\path[draw] (0, 0) -- ++(right:1) coordinate[pos=0.4] (4-start) -- ++(up:2) -- ++(left:2) -- ++(down:2) -- ++(right:1) coordinate[pos=0.6] (1-end);
\path[draw] (0, 0) -- (0, 1) coordinate (q) coordinate[pos=0.7] (2-start) coordinate[pos=0.4] (4-end);
\path[fill] (q) circle[radius=0.05];
\path[draw, dashed, gray] (q) -- (1, 2) coordinate[pos=0.35] (2-end);
\path[draw, dashed, gray] (0, 0) -- (1, 1) coordinate[pos=0.2] (1-start);
\path (1, 2) -- (1, 1) node[midway, sloped, above] {…};
\path[draw, -{To[scale=0.75]}, bend right=60] (1-start) to node[midway, below] {\small $ m $} (1-end);
\path[draw, -{To[scale=0.75]}, bend right=65] (2-start) to node[pos=0.5, above] {\small $ 1 $} (2-end);
\path[draw, -{To[scale=0.75]}, bend right=45] (4-start) to node[pos=0.45, below] {\small $ k $} (4-end);
\path[draw, very thick, gray] (q) ++(260:0.1) arc(260:-80:0.1);
\path[draw, very thick, gray] (q) ++(-85:0.15) arc(-85:40:0.15);
\path[draw, very thick, gray] (q) ++(-85:0.2) arc(-85:90:0.2) coordinate (spiral-up);
\path[draw, very thick, gray] (spiral-up) arc(90:270:0.22) coordinate (spiral-down);
\path[draw, very thick, gray] (spiral-down) arc(-90:40:0.25);
\path (0.7, 1.1) node {$ μ $};
\path (-0.7, 1.6) node {$ ν $};
\end{tikzpicture}
\caption*{Case (c)}
\end{subfigure}
\begin{subfigure}{0.24\linewidth}
\centering
\begin{tikzpicture}[scale=1.5]
\path[use as bounding box] (-1, -0.5) -- (1, 2);
\path[draw] (0, 0) -- ++(right:1) -- ++(up:2) -- ++(left:2) -- ++(down:2) coordinate[pos=0.85] (3-end) -- ++(right:1) coordinate[pos=0.6] (2-start);
\path[draw] (0, 0) -- (0, 1) coordinate (q) coordinate[pos=0.3] (1-start);
\path[fill] (q) circle[radius=0.05];
\path[draw] (0, 0) -- ++(down:0.5) coordinate[pos=0.5] (2-end) -- ++(left:1) node[midway, above] {…} -- ++(up:0.5) coordinate[pos=0.5] (3-start);
\path[draw, -{To[scale=0.75]}, bend right=45] (1-start) to node[near start, below] {\small $ n $} (2-start);
\path[draw, -{To[scale=0.75]}, bend right=45] (2-start) to node[near end, above] {$ \scriptscriptstyle n+1 $} (2-end);
\path[draw, -{To[scale=0.75]}, bend right=90, looseness=1.5] (3-start) to node[pos=0.85, below] {\small $ m $} (3-end);
\path[draw, very thick, gray] (q) ++(260:0.1) arc(260:-80:0.1);
\path[draw, very thick, gray] (q) ++(265:0.15) arc(265:-85:0.15);
\path (-0.5, -0.2) node {$ μ $};
\path (0, 1.3) node {$ ν $};
\end{tikzpicture}
\caption*{Case (d)}
\end{subfigure}
\caption{Cancellation for \autoref{th:hochschild-construction-nu-mu-end-small}}
\label{fig:hochschild-construction-nu-mu-end-small}
\end{figure}

\begin{lemma}
\label{th:hochschild-construction-nu-mu-end-old-long}
$ d ν $ vanishes on any sequence that has a $ ν^{≥2} (…, μ^{≥3} (…), …) $ contribution, with $ ν $ old-era or new-era end-split with turning angle $ ℓ^r $.
\end{lemma}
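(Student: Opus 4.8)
The plan is to run the argument in exact parallel to the companion \autoref{th:hochschild-construction-mu-nu-end-long}, in which the inner and outer roles of $\mu$ and $\nu$ are interchanged, and to dispatch every case to one of three cancellation mechanisms that are already available: recognising the ambient sequence as a parking garage sequence (so that $d\nu$ vanishes on it by the parking-garage results of \autoref{sec:even-parking}), commuting the inner $\mu$ past the outer $\nu$ to pair with a swap term, or absorbing the inner reduction into a $\mu^2$-contraction term $\nu(\ldots,\mu^2(\ldots),\ldots)$ of the type treated in \autoref{th:hochschild-construction-nu-mu2}. First I would fix a sequence carrying a contribution $\nu^{\geq2}(\ldots,\mu^{\geq3}(\ldots),\ldots)$ whose outer $\nu$ is old- or new-era end-split with turning angle $\ell^r$. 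Because the turning angle is a full $\ell^r$, the input presented to $\nu$ (once the inner $\mu$ is contracted) winds exactly $r$ times around $m$ before closing into a disk; this global winding constraint is what guarantees that the cancellation partners exist and dictates which mechanism applies.

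I would then split into cases~(a)--(e) according to the position of the inner $\mu$'s result inside the outer sequence and whether that $\mu$ is all-in, first-out, or final-out, mirroring the labels of \autoref{th:hochschild-construction-mu-nu-end-long}. When the inner $\mu$ sits wholly before the angle of the $\nu$-sequence that winds around $m$, prolonging that angle by $\ell^r$ closes the outer sequence into a disk and the whole configuration is a parking garage sequence; these are the self-contained cases, for which nothing further is needed. When the inner $\mu$ reduces the initial block of the sequence and its result becomes the first input of $\nu$, the two operations act on complementary sub-disks, so one simply commutes them and pairs the term with the swap term of \autoref{th:hochschild-construction-mu-nu-end-long}; this is part~(d). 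The remaining cases~(b) and~(e), in which the inner $\mu^{\geq3}$ reduces a block that the outer $\nu$ could equally have met as a single $\mu^2$-contraction, cancel against the corresponding $\nu(\ldots,\mu^2(\alpha_{j+1},\alpha_j),\ldots)$ term supplied by \autoref{th:hochschild-construction-nu-mu2}, the two terms sharing one and the same magic angle.

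For each pairing the cancellation is verified by the magic-angle bookkeeping used throughout the section: the terms sum to zero precisely because the signed sum of the scalars $\#\nu^1$ of the participating magic angles vanishes, equivalently because every ray out of $m$ crosses the grey rings of the involved magic angles an even number of times. In the swap case the two magic angles coincide outright, and in the $\mu^2$-absorption cases one checks that contracting the extra block by $\mu^{\geq3}$ rather than by $\mu^2$ adds no new sector around $m$ and hence leaves the magic angle of the outer $\nu$ unchanged. The hard part, I expect, is the new-era subcase: there the magic angle is not $\ell_m^r$ but a single indecomposable angle $n$ following the shared arc clockwise, and one must confirm both that the $n$ produced when the inner $\mu$ creates the triggering $\gamma$ still matches the magic angle of the cancelling $\mu^2$-contraction and that the reduced-degree sign $(-1)^{\|\gamma\|}$ incurred by moving $\mu$ past the preceding angles is exactly cancelled by the Hochschild sign of the term. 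Keeping this sign straight, together with the degeneration $\gamma\beta=0$ when both a front $\gamma$ and a back $\beta$ are present, is where the bookkeeping is most delicate.
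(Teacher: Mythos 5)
Your proposal matches the paper's proof in all essentials: the same case division by the position of the inner $\mu$'s result and its all-in/first-out/final-out type, the same three cancellation mechanisms (parking garage, order swap against the companion lemma's term, and $\mu^2$-contraction pairing via \autoref{th:hochschild-construction-nu-mu2}), and the same strategy of verifying the old-era case first and then arguing that the magic-angle coefficients of each cancelling pair shift simultaneously to the new-era sector. The paper's treatment of the new-era subcase is in fact briefer than you anticipate — it observes that all pairings used are either garage cancellations (which tolerate a prepended $\gamma$) or pairs whose coefficients change in lockstep — but your plan covers the same ground.
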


\begin{proof}
See \autoref{fig:hochschild-construction-nu-mu-end-old-long}. Let us check the old-era case first and then comment on the new-era case.

Distinguish cases: (a) The inner $ μ $ is first-out and its result is the final angle of $ ν $. (b) The inner $ μ $ is first-out and its result is an ordinary or the first angle of $ ν $. (c) The inner $ μ $ is final-out and its result is the first angle of $ ν $, and $ ν $ is all-in. (d) The inner $ μ $ is final-out and its result is the first angle of $ ν $, and $ ν $ is first-out. (e) The inner $ μ $ is final-out and its result is not the first angle of the outer $ ν $.

Regard case (a). Write the contribution as $ ν(μ(α_k, …, α_{m+1}), …, ) $. Then the final angle $ α_k $ of the inner $ μ $ winds around $ m $. By assumption, the sequence $ α_1, …, α_{m+1} $ already winds at least $ ℓ^r $ around $ m $, in particular does $ α_1, …, α_{k-1} $. This constitutes a garage sequence.

Regard case (b). We can simply connect the angle $ α_{m+1} $ after $ μ $ to the final angle $ α_m $ of $ μ $:
\begin{equation*}
ν^{≥2} (…, μ^{≥3} (α_m, …, α_{n+1}), …) + ν^{≥2} (…, μ^2 (α_{m+1}, α_m), …).
\end{equation*}
Case (c) is similar to case (a) and yields a parking garage sequence. In case (d), there is no relevant turning around $ m $ and we can swap the order in which we apply $ μ $ and $ ν $:
\begin{equation*}
ν^{≥2} (…, μ^{≥3} (α_m, …, α_1)) + μ^{≥3} (ν^{≥2} (α_k, …, α_m), …). 
\end{equation*}
Both $ ν $ evaluations have equal final angle. Since the left one is old-era, the right-one is old-era as well and both have equal coefficient. Case (e) is similar to (b).

We have proven the old-era case. Finally, let us comment on the case $ ν $ is new-era instead. We claim all cancellations carry over one-to-one. Indeed, in the proof until now we have only used cancellations via parking garage sequences and cancellations in pairs. Those two cancellations from parking garage sequences carry over, since at a parking garage we are free to append $ γ $ at the front. The cancellations in pairs consist of two cancellations with a $ μ^2 $ insertion and one cancellation by swapping the order of applying $ μ $ and $ ν $. In the new-era case, these three cancellations still exist: In all three cancellations, the coefficients of both contributions change simultaneously to $ \#ν^1 $ of the new-era sector and hence still cancel out.
\end{proof}

\begin{figure}
\centering
\begin{subfigure}{0.24\linewidth}
\centering
\begin{tikzpicture}[scale=1.5]
\path[draw] (0, 0) -- ++(right:1) coordinate[pos=0.55] (1-start) -- ++(up:2) -- ++(left:2) -- ++(down:2) -- ++(right:1) coordinate[pos=0.6] (3-end);
\path[draw] (0, 0) -- (0, 1) coordinate (q) coordinate[pos=0.6] (2-end) coordinate[pos=0.4] (3-start);
\path[fill] (q) circle[radius=0.05];
\path[draw, dashed, gray] (q) -- (-1, 2) coordinate[pos=0.3] (2-start);
\path[draw, dashed, gray] (0, 0) -- (-1, 1) coordinate[pos=0.2] (1-end);
\path (-1, 2) -- (-1, 1) node[midway, sloped, below] {…};
\path[draw, -{To[scale=0.75]}, bend right=65] (1-start) to node[pos=0.5, below] {\scalebox{1}{$ \scriptstyle m+1 $}} (1-end);
\path[draw, -{To[scale=0.75]}, bend right=60] (2-start) to node[pos=0.4, right] {\small $ k $} (2-end);
\path[draw, -{To[scale=0.75]}, bend right=45] (3-start) to node[pos=0.5, below] {\small $ 1 $} (3-end);
\path (0.7, 1.1) node {$ ν $};
\path (-0.8, 1.2) node {$ μ $};
\path[bend left, decorate, decoration={text along path, text align=center, text={|\tiny|parking garage}}] (-0.2, 1.5) to (0.5, 0.5);
\end{tikzpicture}
\caption*{Case (a)}
\end{subfigure}
\begin{subfigure}{0.24\linewidth}
\centering
\begin{tikzpicture}[scale=1.5]
\path[use as bounding box] (-1, -0.5) -- (1, 2);
\path[draw] (0, 0) -- ++(right:1) -- ++(up:2) -- ++(left:2) -- ++(down:2) coordinate[pos=0.85] (3-end) -- ++(right:1) coordinate[pos=0.4] (3-start);
\path[draw] (0, 0) -- (0, 1) coordinate (q) coordinate[pos=0.3] (1-start);
\path[fill] (q) circle[radius=0.05];
\path[draw] (0, 0) -- ++(down:0.5) coordinate[pos=0.5] (1-end) -- ++(left:1) node[midway, above] {…} -- ++(up:0.5) coordinate[pos=0.5] (2-start);
\path[draw, -{To[scale=0.75]}, bend right=90, looseness=2] (1-start) to node[pos=0.15, below] {$ \scriptscriptstyle n+1 $} (1-end);
\path[draw, -{To[scale=0.75]}, bend right=45] (2-start) to node[near start, above] {\tiny $ m $} (3-start);
\path[draw, -{To[scale=0.75]}, bend right=45] (3-start) to node[pos=0.7, below] {$ \scriptscriptstyle m+1 $} (3-end);
\path[draw, very thick, gray] (q) ++(260:0.1) arc(260:-80:0.1);
\path[draw, very thick, gray] (q) ++(265:0.15) arc(265:-85:0.15);
\path (-0.5, -0.2) node {$ μ $};
\path (0, 1.3) node {$ ν $};
\end{tikzpicture}
\caption*{Case (b)}
\end{subfigure}
\begin{subfigure}{0.24\linewidth}
\centering
\begin{tikzpicture}[scale=1.5]
\path[draw] (0, 0) -- ++(right:1) coordinate[pos=0.4] (4-start) -- ++(up:2) -- ++(left:2) -- ++(down:2) -- ++(right:1) coordinate[pos=0.6] (1-end);
\path[draw] (0, 0) -- (0, 1) coordinate (q) coordinate[pos=0.7] (2-start) coordinate[pos=0.4] (4-end);
\path[fill] (q) circle[radius=0.05];
\path[draw, dashed, gray] (q) -- (1, 2) coordinate[pos=0.3] (2-end);
\path[draw, dashed, gray] (0, 0) -- (1, 1) coordinate[pos=0.2] (1-start);
\path (1, 2) -- (1, 1) node[midway, sloped, above] {…};
\path[draw, -{To[scale=0.75]}, bend right=60] (1-start) to node[midway, below] {\small $ m $} (1-end);
\path[draw, -{To[scale=0.75]}, bend right=65] (2-start) to node[pos=0.5, above] {\small $ 1 $} (2-end);
\path[draw, -{To[scale=0.75]}, bend right=45] (4-start) to node[pos=0.45, below] {\small $ k $} (4-end);
\path[bend left, decorate, decoration={text along path, text align=center, text={|\tiny|parking garage}}] ($ (q) + (-0.5, -0.8) $) to ($ (q) + (0.3, 0.8) $);
\path (0.7, 1.1) node {$ μ $};
\path (-0.7, 1.6) node {$ ν $};
\end{tikzpicture}
\caption*{Case (c)}
\end{subfigure}
\begin{subfigure}{0.24\linewidth}
\centering
\begin{tikzpicture}[scale=1.5]
\path[draw] (0, 0) -- ++(right:1) coordinate[pos=0.4] (4-start) -- ++(up:2) -- ++(left:2) -- ++(down:2) -- ++(right:1) coordinate[pos=0.6] (1-end);
\path[draw] (0, 0) -- (0, 1) coordinate (q) coordinate[pos=0.4] (4-end);
\path[fill] (q) circle[radius=0.05];
\path[draw, dashed, gray] (0, 0) -- (1, 2) coordinate[pos=0.25] (2-end) coordinate[pos=0.85] (2-start);
\path[draw, dashed, gray] (0, 0) -- (1, 0.5) coordinate[pos=0.2] (1-start) coordinate[pos=0.8] (3-end);
\path (1, 2) -- (1, 0.5) node[midway, sloped, above] {…};
\path[draw, -{To[scale=0.75]}, bend right=60, looseness=1.5] (1-start) to node[pos=0.55, below] {$ \scriptscriptstyle n+1 $} (1-end);
\path[draw, -{To[scale=0.75]}, bend right=80, looseness=1] (2-start) to node[midway, above] {\small $ 1 $} ($ (2-start) + (0.4, 0.2) $);
\path[draw, -{To[scale=0.75]}, bend right=45] (4-start) to node[pos=0.45, below] {\small $ k $} (4-end);
\path[draw, -{To[scale=0.75]}, bend right=60] ($ (3-end) + (0.3, 0.3) $) to node[pos=0.45, below] {\small $ n $} (3-end);
\path[draw, thick, gray] (q) ++(-80:0.1) arc(-80:260:0.1);
\path[draw, thick, gray] (q) ++(-80:0.15) arc(-80:260:0.15);
\path (0.6, 0.7) node {$ μ $};
\path (-0.5, 1.2) node {$ ν $};
\end{tikzpicture}
\caption*{Case (d)}
\end{subfigure}
\begin{subfigure}{0.24\linewidth}
\centering
\begin{tikzpicture}[scale=1.5]
\path[use as bounding box] (-1, -0.5) -- (1, 2);
\path[draw] (0, 0) -- ++(right:1) -- ++(up:2) -- ++(left:2) -- ++(down:2) coordinate[pos=0.85] (3-end) -- ++(right:1) coordinate[pos=0.6] (2-start);
\path[draw] (0, 0) -- (0, 1) coordinate (q) coordinate[pos=0.3] (1-start);
\path[fill] (q) circle[radius=0.05];
\path[draw] (0, 0) -- ++(down:0.5) coordinate[pos=0.5] (2-end) -- ++(left:1) node[midway, above] {…} -- ++(up:0.5) coordinate[pos=0.5] (3-start);
\path[draw, -{To[scale=0.75]}, bend right=45] (1-start) to node[near start, below] {\small $ n $} (2-start);
\path[draw, -{To[scale=0.75]}, bend right=45] (2-start) to node[near end, above] {$ \scriptscriptstyle n+1 $} (2-end);
\path[draw, -{To[scale=0.75]}, bend right=90, looseness=1.5] (3-start) to node[pos=0.85, below] {\small $ m $} (3-end);
\path[draw, very thick, gray] (q) ++(260:0.1) arc(260:-80:0.1);
\path[draw, very thick, gray] (q) ++(265:0.15) arc(265:-85:0.15);
\path (-0.5, -0.2) node {$ μ $};
\path (0, 1.3) node {$ ν $};
\end{tikzpicture}
\caption*{Case (e)}
\end{subfigure}
\caption{Cancellation for \autoref{th:hochschild-construction-nu-mu-end-old-long}}
\label{fig:hochschild-construction-nu-mu-end-old-long}
\end{figure}

\begin{lemma}
\label{th:hochschild-construction-nu-mu-mid}
Any contribution $ ν^{≥2} (…, μ^{≥3} (…), …) $ with $ ν $ mid-split cancels.
\end{lemma}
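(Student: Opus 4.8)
The plan is to mirror the case analysis of \autoref{th:hochschild-construction-nu-mu-end-small} and \autoref{th:hochschild-construction-nu-mu-end-old-long}, now with the outer $ ν $ middle-split rather than end-split. Fix the contribution $ ν^{≥2}(…, μ^{≥3}(…), …) $, and let $ 0 < i < k $ be the split index of the outer $ ν $ with splitting angle $ α $ serving as its magic angle. The inner $ μ $ collapses a consecutive block of angles into a single angle (possibly carrying a $ β $ or $ γ $ decoration), and everything turns on where this block sits relative to the split and on whether $ μ $ is all-in, first-out, or final-out. First I would organize the cases by the role the inner $ μ $ result plays in the outer $ ν $: (a) the result is the first angle of $ ν $; (b) the result is the last angle of $ ν $; (c) the result is an ordinary interior angle not adjacent to the split; together with the sub-cases in which the block straddles the split point.

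Cases (a) and (b) I expect to resolve by swapping the order in which $ μ $ and $ ν $ are applied: the term $ ν^{≥2}(…, μ^{≥3}(…)) $ is cancelled by a term $ μ^{≥3}(ν^{≥2}(…), …) $ in which the $ ν $ remains middle-split with the same split. These are precisely the cancellation partners recorded in partitions 8, 12 and 25 of the table above, pairing with \autoref{th:hochschild-construction-mu-nu-middle-outsource} and \autoref{th:hochschild-construction-mu-nu-mid-split}. Case (c) I expect to resolve by a $ μ^2 $ contraction: the two angles flanking the inner $ μ $ can be merged, producing the partner $ ν^{≥2}(…, μ^2(…), …) $ from \autoref{th:hochschild-construction-nu-mu2}, which is partition 20.

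The heart of the argument will be checking that the magic angles agree across each cancelling pair, so that the weights $ \#ν^1 $ of the two terms coincide and the contributions genuinely cancel. The key geometric fact is that applying $ μ $ (collapsing a disk sequence) or inserting a $ μ^2 $ contraction never changes the number of full turns around $ m $ nor the sectors swept out: a disk sequence carries no net winding, so the splitting angle $ α $ read off by the outer $ ν $ is identical before and after the swap or contraction. I would verify this by the gray-ring bookkeeping advertised in the remark before the table, namely that each ray from $ m $ crosses the magic-angle rings of the two partners an even number of times, and then separately confirm the signs: the middle-split sign $ (-1)^{‖α_1‖ + … + ‖α_i‖} $ must flip against the Hochschild differential sign and against the reduced degree shuffled through by the swap or the $ μ^2 $ insertion.

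The main obstacle I anticipate is the sign bookkeeping in the swap cases, together with the boundary sub-cases where the inner $ μ $ block straddles the split point. When the block straddles the split, applying $ μ $ first may shift which triple $ (i, α_i, α_{i+1}) $ realizes the split, so I must check that the splitting angle the outer $ ν $ reads off is still the one dictated by \autoref{def:even-construction-def} and that no spurious or missing term appears. I would treat the first-out and final-out decorations uniformly, noting as in \autoref{th:hochschild-construction-nu-mu-end-old-long} that appending a $ γ $ at the front or a $ β $ at the back only multiplies every term in a partition by that decoration; it therefore suffices to verify the cancellation on the all-in representatives and then invoke this equivariance.
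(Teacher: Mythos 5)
Your case organization (swap when the inner $\mu$ result sits at an end of the outer $\nu$, contract with a $\mu^2$ when it sits in the interior) matches the spirit of the paper's argument for most configurations, and the partners you name (partitions 8, 12, 20, 25, i.e.\ \autoref{th:hochschild-construction-mu-nu-middle-outsource}, \autoref{th:hochschild-construction-mu-nu-mid-split} and \autoref{th:hochschild-construction-nu-mu2}) are the right ones for those configurations. But there is a genuine gap: you have omitted the case in which the inner $\mu$'s output is used as the \emph{second part of the split} of the middle-split $\nu$. In that configuration the first angle of the inner $\mu$ winds around $m$ (by at least $\ell^r$), and the resulting term does not cancel against any single partner by a swap or a $\mu^2$ contraction — it only cancels as part of the global sum over a parking garage sequence, which is exactly why the paper devotes \autoref{sec:even-parking} to those sequences and invokes it here as its case (d). Your guiding heuristic that ``a disk sequence carries no net winding, so the splitting angle is identical before and after the swap'' fails precisely in this case: the inner $\mu$ block absorbs part of the winding that realizes the split, so the outer $\nu$'s magic angle is not preserved and no pairwise partner exists.

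A second, smaller point: in the configuration where the $\mu$ result is the first angle of $\nu$ but $\nu$ retains a final-out angle, the swap you propose is obstructed (the swapped term would need $\mu$ to accept a decorated angle in a position it cannot), which is why the paper flags this sub-case as ``complicated'' and routes it through the case distinction of \autoref{th:hochschild-construction-mu-nu-mid-split} rather than a clean swap. You should either add that reduction or spell out the extra terms it generates. With the parking-garage case restored and this sub-case handled, your proof would align with the paper's.
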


\begin{proof}
We only check this in case $ μ $ is final-out. Distinguish cases: (a) The $ μ $ result is used as first angle for $ ν $, and $ ν $ is first-out. (b) The $ μ $ is used as first angle for $ ν $, and the result is an ordinary angle for $ ν $. (c) The $ μ $ result is used as final-out angle, as first part of the split, or as ordinary but not first angle for $ ν $. (d) The $ μ $ is used as second part of the split.


Regard case (a). Then $ ν $ has no final-out angle and we simply swap the order of $ μ $ and $ ν $. 

Regard case (b). Then $ ν $ may have a final-out angle, preventing us from producing a cancellation from swapping the order. This complicated case can be dealt with in a similar way as in the case distinction of \autoref{th:hochschild-construction-mu-nu-mid-split}. 


Regard case (c). Label angles as $ ν(…, μ(α_m, …, α_{n+1}), …) $. We argue there is an angle $ α_n $ before $ α_{n+1} $ and we can produce a cancellation with the contraction $ ν(…, μ^2 (α_{n+1}, α_n), …) $. Indeed, if the $ μ $ result is used as first part of the split, then it is not the first angle in the $ ν $ sequence, since a mid-split contribution with first angle being the first part of the split vanishes already. Therefore we can assume there is an ordinary, second part of the split or first-out angle before $ α_s $. This produces a cancellation from a simple augmentation by $ μ^2 $. 

Regard case (d). This means the first angle of the inner $ μ $ goes around $ m $ and we have a parking garage. 
\end{proof}

\begin{figure}
\centering
\begin{subfigure}{0.24\linewidth}
\centering
\begin{tikzpicture}
\path[use as bounding box] (-3, -0.2) -- (1, 2.3);
\path[draw] (0, 0) -- ++(right:1) coordinate[pos=0.3] (1-start) -- ++(up:2) -- ++(left:2) coordinate (corner-top) coordinate[pos=0.8] (3-end) -- ++(down:2) -- ++(right:1) coordinate[pos=0.5] (2-end) -- ++(up:1) coordinate (q) coordinate[pos=0.3] (1-end);
\path[draw] (corner-top) -- ++(150:1) coordinate[pos=0.5] (3-start) -- ++(240:2) -- ++(330:1) coordinate[pos=0.5] (4-end) -- cycle coordinate[pos=0.2] (4-start);
\path[fill] (q) circle[radius=0.05];
\path[draw, dashed, gray] (q) -- (corner-top);
\path[draw, -{To[scale=0.75]}, bend right=45] (1-start) to node[pos=0.8, below] {\small $ s $} (1-end);
\path[draw, -{To[scale=0.75]}, bend right=45] (1-end) to node[pos=0.3, below] {$ \scriptscriptstyle s+1 $} (2-end);
\path[draw, -{To[scale=0.75]}, bend right=100, looseness=2] (3-start) to node[pos=0.4, above] {\small $ m $} (3-end);
\path[draw, -{To[scale=0.75]}, bend right=45] (4-start) to node[pos=0.5, below] {\small $ 1 $} (4-end);
\path[draw, very thick, gray] (q) ++(-80:0.13) arc(-80:135:0.13);
\path[draw, very thick, gray] (q) ++(-85:0.2) arc(-85:135:0.2);
\path (-2, 1) node {$ μ $};
\path (0, 1.5) node {$ ν $};
\end{tikzpicture}
\caption*{Case (a)}
\end{subfigure}
\begin{subfigure}{0.24\linewidth}
\centering
\begin{tikzpicture}
\path[use as bounding box] (-2, -0.2) -- (1, 2.3);
\path[draw] (0, 0) -- ++(right:1) coordinate[pos=0.3] (1-start) -- ++(up:2) -- ++(left:2) coordinate (corner-top) coordinate[pos=0.8] (3-end) -- ++(down:2) -- ++(right:1) coordinate[pos=0.3] (5-start) coordinate[pos=0.5] (2-end) -- ++(up:1) coordinate (q) coordinate[pos=0.3] (1-end);
\path[draw] (corner-top) -- ++(left:1) coordinate[pos=0.5] (3-start) -- ++(down:2) -- ++(right:1) coordinate[pos=0.5] (4-end) -- ++(up:2) coordinate[pos=0.2] (4-start);
\path[fill] (q) circle[radius=0.05];
\path[draw, -{To[scale=0.75]}, bend right=45] (1-start) to node[pos=0.8, below] {\small $ s $} (1-end);
\path[draw, -{To[scale=0.75]}, bend right=45] (1-end) to node[pos=0.3, below] {$ \scriptscriptstyle s+1 $} (2-end);
\path[draw, -{To[scale=0.75]}, bend right=80, looseness=1.5] (3-start) to node[pos=0.4, above] {\small $ m $} (3-end);
\path[draw, -{To[scale=0.75]}, bend right=45] (4-start) to node[pos=0.8, right] {\small $ 1 $} (4-end);
\path[draw, -{To[scale=0.75]}, bend right=100, looseness=2.5] (5-start) to node[pos=0.9, right] {\small $ k $} ($ (5-end) + (-0.7, -0.3) $);
\path (-1.5, 1) node {$ μ $};
\path (0, 1.5) node {$ ν $};
\path[decorate, decoration={text along path, text align=center, text={|\tiny|complicated}}, bend left] (-1, 0.9) to (1, 0.9);
\path[decorate, decoration={text along path, text align=center, text={|\tiny|as in Lemma {\ref*{th:hochschild-construction-mu-nu-mid-split}}}}, bend right] (-1, 1) to (1, 1);
\end{tikzpicture}
\caption*{Case (b)}
\end{subfigure}
\begin{subfigure}{0.24\linewidth}
\centering
\begin{tikzpicture}
\path[use as bounding box] (-2, -0.2) -- (1, 2.3);
\path[draw] (0, 0) -- ++(right:1) coordinate[pos=0.3] (1-start) -- ++(up:2) coordinate[pos=0.9] (5-start) coordinate[pos=0.55] (5-end) -- ++(left:2) coordinate (corner-top) -- ++(down:2) coordinate[pos=0.3] (4-end) coordinate[pos=0.8] (1-end) -- ++(right:1) coordinate[pos=0.3] (1-start) ++(up:1) coordinate (q);
\path[draw] (corner-top) -- ++(down:1) -- ++(left:1) coordinate[pos=0.3] (3-start) -- ++(down:1) -- ++(right:1) coordinate[pos=0.5] (2-end) -- ++(up:2);
\path[draw] (q) -- (-1, 1) coordinate[pos=0.4] (3-end);
\path[fill] (q) circle[radius=0.05];
\path[draw, dashed, gray] (q) -- (1, 1.5);
\path[draw, -{To[scale=0.75]}, bend right=60] (3-end) to node[pos=0.65, below] {$ \scriptscriptstyle m+1 $} (4-end);
\path[draw, -{To[scale=0.75]}, bend right=45] (1-start) to node[pos=0.8, below] {\small $ n $} (1-end);
\path[draw, -{To[scale=0.75]}, bend right=45] (1-end) to node[pos=0.3, below] {$ \scriptscriptstyle n+1 $} (2-end);
\path[draw, -{To[scale=0.75]}, bend right=80, looseness=1.5] (3-start) to node[pos=0.6, above] {\small $ m $} (3-end);
\path[draw, -{To[scale=0.75]}, bend right=80, looseness=1.5] (5-start) to node[pos=0.15, below] {\small $ 1 $} (5-end);
\path[draw, very thick, gray] (q) ++(-170:0.13) arc(-170:35:0.13);
\path[draw, very thick, gray] (q) ++(-175:0.2) arc(-175:35:0.2);
\path (-1.5, 0.5) node {$ μ $};
\path (0, 1.5) node {$ ν $};
\end{tikzpicture}
\caption*{Case (c)}
\end{subfigure}
\begin{subfigure}{0.24\linewidth}
\centering
\begin{tikzpicture}
\path[use as bounding box] (-1, -0.2) -- (1, 2.3);
\path[draw] (0, 0) -- ++(right:1) coordinate[pos=0.4] (4-start) -- ++(up:2) -- ++(left:2) -- ++(down:2) -- ++(right:1) coordinate[pos=0.6] (1-end);
\path[draw] (0, 0) -- (0, 1) coordinate (q) coordinate[pos=0.7] (2-start) coordinate[pos=0.4] (4-end);
\path[fill] (q) circle[radius=0.05];
\path[draw, dashed, gray] (q) -- (1, 2) coordinate[pos=0.4] (2-end);
\path[draw, dashed, gray] (0, 0) -- (1, 1) coordinate[pos=0.2] (1-start);
\path (1, 2) -- (1, 1) node[midway, sloped, above] {…};
\path[draw, -{To[scale=0.75]}, bend right=60] (1-start) to node[pos=0.4, below] {\small $ m $} (1-end);
\path[draw, -{To[scale=0.75]}, bend right=65, looseness=1.5] (2-start) to node[pos=0.25, above] {$ \scriptscriptstyle n+1 $} (2-end);
\path[draw, -{To[scale=0.75]}, bend right=45] (4-start) to node[pos=0.5, below] {\small $ n $} (4-end);
\path[bend left, decorate, decoration={text along path, text align=center, text={|\tiny|parking garage}}] ($ (q) + (-0.5, -0.8) $) to ($ (q) + (0.3, 0.8) $);
\path (0.7, 1.1) node {$ μ $};
\path (-0.7, 1.6) node {$ ν $};
\end{tikzpicture}
\caption*{Case (d)}
\end{subfigure}
\caption{Cancellation for \autoref{th:hochschild-construction-nu-mu-mid}}
\label{fig:hochschild-construction-nu-mu-mid}
\end{figure}

\begin{lemma}
\label{th:hochschild-construction-nu1-mu}
Any sequence contributing $ ν^1 (μ^{≥3} (…)) $ vanishes under $ d ν $.
\end{lemma}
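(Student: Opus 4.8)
The plan is first to pin down when a contribution $ ν^1(μ^{≥3}(…)) $ can be nonzero, and then to match it, shape by shape, with the remaining terms of $ dν $. By \autoref{def:4prelim-gtl-higher}, the inner product $ μ^{≥3}(α_k, …, α_1) $ is nonzero exactly when $ α_1, …, α_k $ is a disk sequence carrying at most one protruding angle: an all-in disk sequence returns the identity, a final-out one a front angle $ β $, and a first-out one the angle $ (-1)^{|γ|} γ $. Since $ ν^1 $ vanishes on every angle that does not wind around $ m $, the all-in case contributes nothing, and the only sequences we must treat are those whose protruding angle winds around $ m $. I would record this reduction first; the final-out case is then entirely analogous to the first-out case with the roles of the first and last angle exchanged, so it suffices to treat a first-out protruding angle $ γ $ winding around $ m $.

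Next I would split according to how far $ γ $ winds around $ m $ relative to $ ℓ^r $. When $ γ $ winds \emph{more} than $ ℓ^r $, the disk sequence together with $ γ $ already spirals past a full $ ℓ^r $ turn around $ m $, so the whole sequence is a parking garage sequence; the term $ ν^1(μ^{≥3}(…)) $ is then exactly one of the contributions that the computation of \autoref{sec:even-parking} already cancels — compare the contributions of the form $ ν^1(μ(…)) $ that arise there once the long turn exceeds $ ℓ^r $ — so nothing new is needed. The substantive case is $ γ ≤ ℓ^r $. The long turn is then too short to form a garage, and I would instead produce the triple cancellation
\begin{equation*}
ν^1(μ^{≥3}(α_k, …, α_1)) + μ^{≥3}(…, ν^1(α_1)) + μ^2(ν^{≥2}(α_k, …, α_2), α_1),
\end{equation*}
in which the first angle $ α_1 $ carries the protruding $ γ $ (so $ α_1 ≥ γ $), the middle term applies $ ν^1 $ directly to $ α_1 $ and reinserts it, and the last term is an end-split $ ν^{≥2}(α_k, …, α_2) $ whose turning angle is the winding left after removing $ α_1 $. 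This is precisely the partition recorded earlier in the list: its three magic angles are $ γ $, the full angle $ α_1 $, and the end-split turning angle, arranged around $ m $ so that every ray from $ m $ meets the corresponding grey rings an even number of times.

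I would verify the cancellation by the magic-angle bookkeeping used throughout \autoref{sec:even-other}: since $ \#ν^1 $ is additive on composable angles, it is enough to check that the signed sum of $ \#ν^1 $ of the three magic angles vanishes. This holds because $ γ $ and the end-split turning angle together partition $ α_1 $ up to the subtracted $ ℓ^r $, and because $ ‖ℓ_m^r‖ $ is odd, so that the $ ℓ^r $-shifts occurring in all three terms contribute matching signs. The boundary subcases have to be separated out, exactly as the analogous ones in \autoref{th:hochschild-construction-nu-mu-end-small} (its a2 versus a3): when no angle remains to the left of the inner $ ν $, or when $ γ $ equals $ ℓ^r $ exactly, one of the three terms degenerates and the surviving two already cancel as a pair.

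The main obstacle I anticipate is not deciding which terms appear but the sign and era bookkeeping. One must confirm that the end-split $ ν^{≥2} $ in the last term is of the correct old-era or new-era type and carries the sign $ (-1)^{‖α_2‖ + … + ‖α_k‖} $ predicted by the united sign pattern in the Remark following \autoref{def:even-construction-def}, and that the Koszul sign attached to the inner $ μ $ in the Hochschild differential exactly absorbs the degree of $ γ $. As in the parking garage computation, I expect these signs to conspire precisely because $ ℓ_m^r $ has odd reduced degree. The cleanest route is to reduce, as in \autoref{sec:even-parking}, to the minimal configuration with all auxiliary disk sequences stripped off, verify the three coefficients and signs there, and then observe that stitching on further disk sequences multiplies every term by the same boundary factor and so preserves the cancellation.
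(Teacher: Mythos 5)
Your overall strategy matches the paper's: reduce to the first-out case (the final-out case being symmetric), discard the all-in case since $ν^1$ vanishes on identities, and then either recognize a parking garage sequence or exhibit the triple cancellation
\begin{equation*}
ν^1 (μ(α_k, …, α_1 β)) + μ^{≥3} (α_k, …, ν^1 (α_1 β)) + μ^2 (ν^{≥2} (α_k, …, α_2), α_1 β),
\end{equation*}
and your magic-angle bookkeeping for that triple (the protruding angle and the end-split turning angle together compose to the full first angle, so the input scalars cancel) is the right mechanism. However, the case distinction is made on the wrong angle, and this is a genuine gap. Write the sequence as $α_1 β, α_2, …, α_k$ with $α_1, …, α_k$ a disk sequence, so that $α_1$ is the interior angle at $m$ (the amount by which $α_2, …, α_k$ winds back) and $β$ is the protruding output angle, your $γ$. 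The paper splits on the size of $α_1$ relative to $ℓ^r$; you split on the size of $γ$. These two conditions are independent: a short polygon corner at $m$ can carry an arbitrarily long protruding angle, and a spiralling immersed disk can carry a short one.

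Concretely, both of your cases fail on one side. If $γ > ℓ^r$ but $α_1 < ℓ^r$, the outer angles $α_2, …, α_k$ do not trace consecutive polygons past $r$ full turns around $m$, so the sequence is not a parking garage and your case (a) has nothing to cancel against. If $γ ≤ ℓ^r$ but $α_1 > ℓ^r$, the end-split $ν^{≥2} (α_k, …, α_2)$ in your third term does not exist, since its turning angle would be $α_1 > ℓ^r$ and \autoref{def:even-construction-def} only defines end-split contributions up to turning angle $ℓ^r$; this is exactly the situation in which the sequence \emph{is} a garage. The correct dichotomy is the paper's: $α_1 > ℓ^r$ gives a parking garage handled by \autoref{sec:even-parking}, while $α_1 ≤ ℓ^r$ gives the triple above — note that the side condition recorded in the corresponding entry of the partition table ($α ≤ ℓ^r$, $α_1 ≥ α$) is a condition on the turning angle, not on the output of the inner $μ$.
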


\begin{proof}
We shall make a case distinction whether the inner $ μ $ is first-out or final-out. Both cases work similarly, so let us just assume the inner $ μ $ is first-out. Then the sequence is of the form $ α_1 β, …, α_k $ with $ α_1, …, α_k $ a disk sequence. This means $ α_1 $ is precisely as long as the total angle that $ α_2, …, α_k $ winds back. Moreover, $ ν^1 (β) $ is nonzero, hence $ β $ and also $ α_1 $ wind around $ m $.

To find a cancellation, our best guess is that $ α_1 β, …, α_k $ constitutes a parking garage with inner spiral angle (part of) $ α_1 β $. Whether this is the case or not depends on the size of the angle that $ α_2, …, α_k $ covers. Distinguish cases as follows: (a) The angle $ α_1 $ is bigger than $ ℓ^r $. (b) The angle $ α_1 $ is smaller than or equal to $ ℓ^r $.

In case (a) we have a parking garage sequence. Regard case (b). Then no inner $ ν^{≥2} $ application is possible with at most $ k - 2 $ terms, since $ α_2, …, α_k $ is too short. However since $ α_1 ≤ ℓ^r $, we have an end-split $ ν^{≥2} (α_k, …, α_2) $ and obtain a cancellation in a triple
\begin{equation*}
ν^1 (μ(α_k, …, α_1 β)) + μ^{≥3} (α_k, …, ν^1 (α_1 β)) + μ^2 (ν^{≥2} (α_k, …, α_2), α_1 β)).
\end{equation*}
See \autoref{fig:hochschild-construction-nu1-mu}.
\end{proof}

\begin{figure}
\centering
\begin{subfigure}{0.25\linewidth}
\centering
\begin{tikzpicture}[scale=1.5]
\path[draw] (0, 0) -- ++(120:1) coordinate[pos=0.3] (2-end) coordinate[pos=0.7] (3-start) -- ++(60:1) coordinate[pos=0.3] (3-end) coordinate[pos=0.7] (4-start) -- ++(right:1.2) coordinate[pos=0.3] (4-end) coordinate[pos=0.7] (5-start) -- ++(300:1) coordinate[pos=0.3] (5-end) coordinate[pos=0.7] (6-start) -- ++(240:1) coordinate[pos=0.3] (6-end) coordinate[pos=0.7] (1-start) coordinate (q) -- ++(left:1.2) coordinate[pos=0.4] (1-end) coordinate[pos=0.7] (2-start);
\foreach \i in {2, 3, 4, 5, 6} \path[draw, -{To[scale=0.75]}, bend right=60] (\i-start) to (\i-end);
\foreach \i in {2, 3, 4, 5, 6} \path ($ (\i-start)!0.5!(\i-end) $) node {\small $ \i $};
\path[draw, -{To[scale=0.75]}, bend right=80, looseness=1.5] ($ (q) + (0.5, 0) $) to node[pos=0.6, below] {\small $ 1 $} node[pos=0.3, below] {\small $ β $} (1-end);
\path[fill] (q) circle[radius=0.05];
\path[draw, very thick, gray] (q) ++(0:0.1) arc(0:170:0.1);
\path[draw, very thick, gray] (q) ++(70:0.15) arc(70:175:0.15);
\path[draw, very thick, gray] (q) ++(0:0.15) arc(0:50:0.15);
\end{tikzpicture}
\caption*{Case (b)}
\end{subfigure}
\begin{subfigure}{0.25\linewidth}
\centering
\begin{tikzpicture}[scale=1.5]
\path[draw] (0, 0) -- ++(right:1) coordinate[pos=0.3] (k-start) -- ++(up:2) -- ++(left:2) -- ++(down:2) -- ++(right:1) coordinate[pos=0.7] (1-end) -- ++(up:1) coordinate (q) coordinate[pos=0.3] (1-start);
\path[draw, -{To[scale=0.75]}] (q) ++(0, 0.5) node[below] {$ \scriptstyle 1=ℓ^r $} arc(90:270:0.55);
\path[draw] (q) ++(0, 0.5) arc(90:-90:0.4 and 0.5) coordinate (end);
\path[draw] (end) arc(270:180:0.4) node[pos=0.6, right] {\small $ β $};
\path[draw, very thick, gray] (q) ++(260:0.1) arc(260:-80:0.1);
\path[draw, very thick, gray] (q) ++(265:0.15) arc(265:0:0.15) coordinate (spiral-right);
\path[draw, very thick, gray] (spiral-right) arc(0:-180:0.175 and 0.19);
\path[draw, very thick, gray] (q) ++(265:0.25) arc(265:180:0.25);
\path[fill] (q) circle[radius=0.05];
\path[draw, -{To[scale=0.75]}, bend right=45] (1-start) to node[near start, below] {\small $ 2 $} (1-end);
\path[draw, -{To[scale=0.75]}, bend right=45] (k-start) to node[near end, below] {\small $ k $} (1-start);
\end{tikzpicture}
\caption*{Case (c)}
\end{subfigure}
\caption{Cancellation for \autoref{th:hochschild-construction-nu1-mu}}
\label{fig:hochschild-construction-nu1-mu}
\end{figure}

\begin{lemma}
\label{th:hochschild-construction-mu-nu1-outsource}
Any contribution $ μ^{≥3} (α_k, …, ν^1 (α_1)) $ with first-out $ μ $ cancels.
\end{lemma}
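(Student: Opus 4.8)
The plan is to recognise every such contribution not as the source of a fresh cancellation, but as one member of a triple that has already been shown to vanish, and to route it to the correct triple by a short length comparison. First note that $ν^1(α_1) = \#ν^1(α_1)\,α_1 ℓ_m^r$ is nonzero only when $α_1$ winds around $m$, so we may assume this from the outset. Since the outer $μ$ is first-out and $ν^1(α_1)$ is its first (rightmost) entry, the prolonged front angle $α_1 ℓ_m^r$ overshoots the disk formed by $α_2,…,α_k$: writing $α_1 ℓ_m^r = \tilde α_1 γ$ with $\tilde α_1,α_2,…,α_k$ a disk sequence, the value of the contribution is $(-1)^{|γ|}\,\#ν^1(α_1)\,γ$. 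Let $α$ denote the end-split turning angle of $α_2,…,α_k$, i.e.\ the angle that closes this subsequence into a disk. The full input sequence is $s = α_1,…,α_k$, and I would now compare $α_1$ first with $ℓ^r$ and then with $α$.

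First I would dispose of the case $α_1 > ℓ^r$. Then $α_2,…,α_k$ winds more than $r$ times around $m$, so $s$ is a parking garage sequence, and the cancellation is already covered by the results of \autoref{sec:even-parking}; nothing further is needed. In the remaining case $α_1 ≤ ℓ^r$ (so $α ≤ ℓ^r$ as well), the plan is to match $s$ against one of the two already-established triples. If $α_1 ≥ α$, the angles $α_1,…,α_k$ already close a disk with excess, the companion term is $ν^1(μ^{≥3}(α_k,…,α_1))$, and $s$ is precisely of the shape treated in \autoref{th:hochschild-construction-nu1-mu}: our contribution is literally the middle term of the triple appearing there (partition~15). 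If instead $α_1 < α$, the front is too short to close a disk, so the companion is an inner-$μ$ contribution $ν^{≥2}(…,μ^{≥3}(α_m,…,α_1))$ rather than a $ν^1$ of a full $μ$, and $s$ falls into the separate triple recorded as partition~16. In both triples the third member is $μ^2(ν^{≥2}(α_k,…,α_2),α_1)$, whose magic angle is exactly the one needed to reconcile the other two weighted results.

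For the cancellation of scalars I would invoke the parity bookkeeping used throughout this section: since $\#ν^1$ is additive along angles and, in each figure, every ray emanating from $m$ crosses the magic-angle rings of the participating terms an even number of times, the signed sum of the three input scalars is forced to vanish. This reduces the whole lemma to the two triples already verified (the one of \autoref{th:hochschild-construction-nu1-mu} and its $α_1<α$ analogue) together with the parking-garage analysis.

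The main obstacle will be the sign bookkeeping rather than the geometry. One must check that the first-out sign $(-1)^{|γ|}$, with $γ = \tilde α_1^{-1} α_1 ℓ_m^r$, combines consistently with the sign produced by the Hochschild differential $[μ,ν]$ and with the $ν^1$-weight across all three members of each triple, and one must confirm that the magic angle attributed to the $μ^2$-term genuinely differs between the subcases $α_1 ≥ α$ and $α_1 < α$ in exactly the way that closes the scalar identity. Because $‖ℓ_m^r‖$ is odd, these signs are rigid, so once the correct magic angle is identified in each subcase the triples cancel term by term.
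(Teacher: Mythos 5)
Your inventory of cancellation partners is the right one: the two triples with third member $ μ^2 (ν^{≥2} (α_k, …, α_2), α_1) $ (partitions 15 and 16) plus the parking-garage escape, and the comparison of $ α_1 $ against the turning angle $ α $ of $ α_2, …, α_k $ correctly selects between the companion $ ν^1 (μ(α_k, …, α_1)) $ and the companion $ ν^{≥2} (…, μ^{≥3} (α_t, …, α_1)) $. The gap is in the first branch of your case distinction: you key the garage/non-garage decision to the length of $ α_1 $, whereas it must be keyed to the length of $ α $. The inference ``$ α_1 > ℓ^r $, hence $ α_2, …, α_k $ winds more than $ r $ times around $ m $'' does not hold. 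How much $ α_2, …, α_k $ winds back is measured by $ α $ alone, and the only constraint tying $ α $ to $ α_1 $ is $ α < α_1 ℓ^r $ (needed for the first-out contribution to exist), which is compatible with $ α_1 > ℓ^r $ and $ α $ consisting of a single indecomposable angle. In that configuration there is no spiral of polygons covering more than $ r $ turns, so the sequence is not a parking garage; since $ α_1 > ℓ^r ≥ α $ it belongs to the $ α_1 ≥ α $ triple of partition 15.

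Symmetrically, your parenthetical ``so $ α ≤ ℓ^r $ as well'' is false in the subcase $ α_1 < α $: one can have $ α_1 ≤ ℓ^r $, $ α_1 < α $ and $ α > ℓ^r $ (take $ r = 1 $, $ α_1 $ half a turn, $ α $ one and a quarter turns, so that $ α_1 ℓ > α $ still holds). This configuration is a genuine parking garage with a $ β $ appended at the back, not an instance of partition 16; indeed the term $ μ^2 (ν^{≥2} (α_k, …, α_2), α_1) $ your triple requires does not exist there, because $ α > ℓ^r $ rules out an end-split $ ν $ on $ α_2, …, α_k $ with turning angle $ α $. The correct trichotomy is: $ α > ℓ^r $ (parking garage); $ α ≤ ℓ^r $ and $ α_1 < α $ (partition 16); $ α ≤ ℓ^r $ and $ α_1 ≥ α $ (partition 15). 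With the case split repaired in this way, the rest of your argument — the identification of the triples and the sign and magic-angle bookkeeping — goes through as you describe.
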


\begin{proof}
Label the angles as $ μ(α_k, …, ν^1 (α_1)) $. Since this contribution is a first-out $ μ $, we can write $ ν^1 (α_k) = α β $. While $ α_1, …, ν^1 (α_k) $ is a first-out disk, the sequence $ α, α_2, …, α_k $ is an actual (all-in) disk sequence. The angle $ α $ is the angle that describes how much $ α_2, …, α_k $ turns. Let us distinguish cases after the length of $ α $: (a) We have $ α > ℓ^r $. (b1) We have $ α ≤ ℓ^r $ and $ α_1 < α $. (c) We have $ α ≤ ℓ^r $ and $ α_1 ≥ α $.

In case (a), we have first-out garage sequence.

Regard case (b1). Then the tail arc of $ α_1 $ cuts the $ α, α_2, …, α_k $ disk into two. Denote by $ α_t $ the angle where the arc touches the opposite side of the disk. We reach the cancellation
\begin{equation*}
μ(α_k, …, ν^1 (α_1)) + μ^2 (ν(α_k, …, α_2), α_1) + ν(…, μ(α_t, …, α_1)).
\end{equation*}
(b1a) has the same cancellation as (b1). 


Regard case (c). Then the angles $ α_1, …, α_k $ already conclude a disk and $ α_2, …, α_k $ are short enough to produce an end-split $ ν $ and we reach the cancellation
\begin{equation*}
μ(α_k, …, ν^1 (α_1)) + ν^1 (μ(α_k, …, α_1)) + μ^2 (ν(α_k, …, α_2), α_1).
\end{equation*}
\end{proof}

\begin{figure}
\centering
\begin{subfigure}{0.25\linewidth}
\centering
\begin{tikzpicture}[scale=1.5]
\path[use as bounding box] (-0.5, -0.2) -- (1.7, 1.7);
\path[draw] (0, 0) -- ++(120:1) coordinate[pos=0.3] (2-end) coordinate[pos=0.7] (3-start) -- ++(60:1) coordinate[pos=0.3] (3-end) coordinate[pos=0.7] (4-start) -- ++(right:1.2) coordinate[pos=0.3] (4-end) coordinate[pos=0.7] (5-start) -- ++(300:1) coordinate[pos=0.3] (5-end) coordinate[pos=0.7] (6-start) -- ++(240:1) coordinate[pos=0.3] (6-end) coordinate[pos=0.7] (1-start) coordinate (q) -- ++(left:1.2) coordinate[pos=0.4] (1-end) coordinate[pos=0.7] (2-start);
\foreach \i in {2, 3, 4, 5, 6} \path[draw, -{To[scale=0.75]}, bend right=60] (\i-start) to (\i-end);
\foreach \i in {2, 3, 4, 5, 6} \path ($ (\i-start)!0.5!(\i-end) $) node {\small $ \i $};
\path[draw, -{To[scale=0.75]}] ($ (q) + (0.5, 0) $) arc(0:180:0.5 and 0.4);
\path[draw] ($ (q) + (0.5, 0) $) arc(0:-180:0.45 and 0.35) arc(180:0:0.4 and 0.3) arc(0:-180:0.35 and 0.25) arc(180:0:0.3 and 0.2);
\path (q) ++(0.1, -0.1) node {\small $ 1 $};
\path[fill] (q) circle[radius=0.05];
\path[bend left, decorate, decoration={text along path, text align=center, text={|\tiny|parking garage}}] (0.4, 0.2) to (1.4, 1);
\end{tikzpicture}
\caption*{Case (a)}
\end{subfigure}
\begin{subfigure}{0.25\linewidth}
\centering
\begin{tikzpicture}[scale=1.5]
\path[use as bounding box] (-0.5, -0.2) -- (1.7, 1.7);
\path[draw] (0, 0) -- ++(120:1) coordinate[pos=0.3] (2-end) coordinate[pos=0.7] (3-start) -- ++(60:1) coordinate[pos=0.3] (3-end) coordinate[pos=0.7] (4-start) -- ++(right:1.2) coordinate[pos=0.3] (4-end) coordinate[pos=0.7] (5-start) -- ++(300:1) coordinate[pos=0.3] (5-end) coordinate[pos=0.7] (6-start) -- ++(240:1) coordinate[pos=0.3] (6-end) coordinate[pos=0.7] (1-start) coordinate (q) -- ++(left:1.2) coordinate[pos=0.4] (1-end) coordinate[pos=0.7] (2-start);
\foreach \i in {2, 3, 4, 5, 6} \path[draw, -{To[scale=0.75]}, bend right=60] (\i-start) to (\i-end);
\foreach \i in {2, 3, 4, 5, 6} \path ($ (\i-start)!0.5!(\i-end) $) node {\small $ \i $};
\path[draw, -{To[scale=0.75]}, bend right=80, looseness=1.5] ($ (q) + (0.5, 0) $) to node[pos=0.6, below] {\small $ 1 $} node[pos=0.3, below] {\small $ β $} (1-end);
\path[fill] (q) circle[radius=0.05];
\path[draw, very thick, gray] (q) ++(0:0.1) arc(0:170:0.1);
\path[draw, very thick, gray] (q) ++(70:0.15) arc(70:175:0.15);
\path[draw, very thick, gray] (q) ++(0:0.15) arc(0:50:0.15);
\end{tikzpicture}
\caption*{Case (b)}
\end{subfigure}
\begin{subfigure}{0.25\linewidth}
\centering
\begin{tikzpicture}[scale=1.5]
\path[use as bounding box] (-0.5, -0.2) -- (1.7, 1.7);
\path[draw] (0, 0) -- ++(120:1) coordinate[pos=0.3] (2-end) coordinate[pos=0.7] (3-start) -- ++(60:1) coordinate[pos=0.3] (3-end) coordinate[pos=0.7] (4-start) -- ++(right:1.2) coordinate (4) coordinate[pos=0.3] (4-end) coordinate[pos=0.7] (5-start) -- ++(300:1) coordinate[pos=0.3] (5-end) coordinate[pos=0.7] (6-start) -- ++(240:1) coordinate[pos=0.3] (6-end) coordinate[pos=0.7] (1-start) coordinate (q) -- ++(left:1.2) coordinate[pos=0.4] (1-end) coordinate[pos=0.7] (2-start);
\foreach \i in {2, 3, 4, 5, 6} \path[draw, -{To[scale=0.75]}, bend right=60] (\i-start) to (\i-end);
\foreach \i in {2, 3, 4, 5, 6} \path ($ (\i-start)!0.5!(\i-end) $) node {\small $ \i $};
\path[draw, dashed, gray] (q) -- (4) coordinate[pos=0.3] (1-start);
\path[draw, -{To[scale=0.75]}, bend right=50] (1-start) to node[pos=0.2, below] {\small $ 1 $} (1-end);
\path[fill] (q) circle[radius=0.05];
\path[draw, very thick, gray] (q) ++(170:0.1) arc(170:70:0.1);
\path[draw, very thick, gray] (q) ++(175:0.15) arc(175:100:0.15);
\path[draw, very thick, gray] (q) ++(80:0.15) arc(80:65:0.15);
\end{tikzpicture}
\caption*{Case (c)}
\end{subfigure}
\caption*{Cancellation for \autoref{th:hochschild-construction-mu-nu1-outsource}}
\label{fig:hochschild-construction-mu-nu1-outsource}
\end{figure}

\begin{lemma}
\label{th:hochschild-construction-mu-nu1}
All contributions $ μ^{≥3} (…, ν^1 (α_m), …) $ cancel.
\end{lemma}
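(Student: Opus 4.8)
The plan is to reduce the statement to the cancellation already carried out in \autoref{sec:even-parking}, by recognizing every sequence that produces such a term as a parking garage sequence. First I would pin down when the contribution $\mu^{\geq 3}(\ldots, \nu^1(\alpha_m), \ldots)$ is nonzero. By \autoref{def:even-construction-def} we have $\nu^1(\alpha_m) = \#\nu^1(\alpha_m)\,\alpha_m \ell_m^r$, so the term vanishes unless $\alpha_m$ winds around $m$; and for the ambient $\mu^{\geq 3}$ to be nonzero, the angles $\ldots, \alpha_m \ell_m^r, \ldots$ must bound an immersed disk. Equivalently, prolonging $\alpha_m$ by $r$ full turns around $m$ must turn the sequence into a disk sequence $\ldots, \ell_m^r \alpha_m, \ldots$, up to the possible $\gamma$ and $\beta$ that a first-out or final-out $\mu$ carries at the two ends.

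Second, I would observe that this is verbatim the defining condition of a parking garage sequence. The prolonged disk winds $r$ extra times around $m$, so it is swept out by the polygons lying around $m$ in clockwise order, with further disks possibly stitched to their exterior arcs; the single angle that compensates all these turns minus $\ell^r$ is precisely $\alpha_m$. Thus $\alpha_m$ is forced to be the spiral (long-turn) angle $\alpha$ of \autoref{sec:even-parking}, the remaining $\alpha_i$ form the outer helix together with the attached parking space, and any leftover letters at the front and back are exactly the optional $\gamma$ and $\beta$ of a decorated garage.

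Third, with the data matched I would simply invoke the two lemmas of \autoref{sec:even-parking}: the first gives $d\nu = 0$ on a bare garage sequence, and the second extends this to a garage carrying $\gamma$ at the front and/or $\beta$ at the back. Since our term is nothing but the characteristic term ``$\mu(\ldots, \nu^1(\alpha), \ldots)$'' of that garage, it cancels against the remaining garage terms enumerated there, which proves the lemma.

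The main obstacle I expect is the bookkeeping in the second step: one must check that prolongability of $\alpha_m$ to a disk sequence genuinely forces the global helix-with-attachments shape, so that $\alpha_m$ is unambiguously the spiral angle and the configuration really is a garage rather than some other sequence that merely happens to close up. I would also confirm that allowing $\nu^1$ at an arbitrary interior index $m$ --- as opposed to the bottom-angle, first-out situation already treated in \autoref{th:hochschild-construction-mu-nu1-outsource} --- introduces no new phenomena, since the only extra freedom is the presence of $\gamma$ and $\beta$ at the ends, which is exactly what the second garage lemma absorbs.
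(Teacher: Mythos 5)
Your reduction works for the case the paper actually handles by the garage argument, namely when $ ν^1(α_m) $ enters the outer $ μ^{≥3} $ as an \emph{ordinary interior} angle: there the whole of $ α_m ℓ_m^r $ lies inside the immersed disk, so the remaining angles must wind back more than $ r $ full turns around $ m $, and the sequence is indeed a (possibly decorated) parking garage with $ α_m $ as the compensating spiral angle. But your second step -- ``this is verbatim the defining condition of a parking garage sequence'' -- fails when $ ν^1(α_m) $ is the first-out or final-out angle of the outer $ μ $. In that situation only a part of $ α_m ℓ_m^r $ is an interior angle of the disk; the rest reaches outside as the $ β $ or $ γ $ of the disk sequence and can absorb some or all of the $ ℓ^r $ prolongation. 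The interior angle $ α $ at that corner can then satisfy $ α ≤ ℓ^r $, in which case the remaining angles wind back \emph{less} than $ r $ full turns and there is no helix: the sequence is not a parking garage, decorated or otherwise, so neither of the two garage lemmas applies. These are exactly cases (b1) and (c) of \autoref{th:hochschild-construction-mu-nu1-outsource}, where the cancellation partners are genuinely different, e.g.\ the triples involving $ μ^2(ν(α_k,…,α_2), α_1) $ together with $ ν(…, μ(α_t,…,α_1)) $ or $ ν^1(μ(α_k,…,α_1)) $, rather than garage terms.

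You do cite \autoref{th:hochschild-construction-mu-nu1-outsource} at the end, but your framing inverts the logic of the paper's proof: you treat the first-out situation as a special instance already subsumed by the garage picture, and the interior position as the case needing extra care, whereas in fact the interior case is the one that is \emph{uniformly} a garage, and the first-out/final-out cases are precisely where the garage reduction breaks down and separate cancellations must be exhibited (with the final-out case argued by symmetry). As written, your proof would assert cancellation via garage terms for sequences on which those terms do not exist, so the gap is real: you need to split off the boundary cases and handle them as in \autoref{th:hochschild-construction-mu-nu1-outsource} before invoking the garage lemmas for the remaining interior case.
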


\begin{proof}
In \autoref{th:hochschild-construction-mu-nu1-outsource}, we already dealt with the case of $ ν^1 $ being the first-out angle for $ μ $. The case it is the final-out angle is similar. We are left with the checking the case it is an ordinary angle, that is, neither first-out nor final-out. Label the angles as
\begin{equation*}
μ(α_k, …, ν^1 (α_m), …, α_1 β) \quad \text{or} \quad μ(γ α_k, …, ν^1 (α_m), …, α_1) \quad \text{or} \quad μ(α_k, …, ν^1 (α_m), …, α_1),
\end{equation*}
depending on whether $ μ $ is first-out, final-out or all-in. This means $ α_1, …, ν^1 (α_m), …, α_k $ is a disk sequence. Since $ ν^1 (α_m) $ has length bigger than $ ℓ^r $, the remaining angles turn more than $ r $ turns around $ m $ and we conclude $ α_1, …, α_m, …, α_k $ is a garage sequence. Finally the original sequence, which may have additional $ β $ and $ γ $, is an (all-in, first-out or final-out) garage sequence.
\end{proof}

\begin{lemma}
\label{th:hochschild-construction-mu2-nu-mid-outsource-1}
Any contribution $ μ^2 (ν^{≥2} (α_k, …, α_2), α_1) $ with $ ν $ middle-split final-out or all-in cancels. We assume $ α_1 $ is at the same side as $ α_k $, which is always the case if $ ν $ is final-out.
\end{lemma}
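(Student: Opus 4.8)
The plan is to expand the middle-split contribution, push it through the outer binary product, and then read off the two or three other terms of $dν$ that live on the same angle sequence and cancel it, checking agreement by the grey-ring principle. Since the inner $ν$ is middle-split there is a split index in the sequence $α_2, \ldots, α_k$ and an associated magic angle $α$, namely the splitting angle of that split; by the final-out (respectively all-in) hypothesis the value $ν^{≥2}(α_k, \ldots, α_2)$ equals $(-1)^{‖α_2‖ + \ldots + ‖α_j‖} \#ν^1(α)\, γ$, where $γ$ is the final-out part of $α_k$ (with $γ = \id$ in the all-in case) and $j$ is the split index. Feeding this into $μ^2(-, α_1)$ attaches the sign $(-1)^{|α_1|}$ and concatenates to $γα_1$, so the whole term is $(-1)^{‖α_2‖ + \ldots + ‖α_j‖ + |α_1|} \#ν^1(α)\, γα_1$; in particular its only scalar is $\#ν^1(α)$.

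The hypothesis that $α_1$ lies on the same side as $α_k$ means that $α_1$ prolongs the orbigon traced by the inner sequence rather than branching off into a separate disk; when $ν$ is final-out this is automatic, since $μ^2(γ, α_1) \neq 0$ already forces $γα_1 \neq 0$. I would therefore organise the proof by how far this prolongation reaches: case (a), where $α_1$ together with the turning of the inner $ν$ overshoots $ℓ^r$, in which the sequence is simply a final-out parking garage and cancellation is already known; and the genuine cases (b)--(e), in which $α_1$ stays within, or exactly reaches, the $ℓ^r$ band. These four are precisely the configurations drawn for partitions 3, 21, 22 and 29.

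In each genuine case the recipe is the same: produce the partner terms and cancel by additivity of $\#ν^1$ over sectors. When $α_1$ keeps the covered angle strictly below $ℓ^r$ (case b) the partners are $μ^{≥3}(\ldots, ν^{≥2}(α_m, \ldots, α_1))$ with an end-split inner $ν$ and the contraction $ν^{≥2}(\ldots, μ^2(α_{m+1}, α_m), \ldots)$; when $α_1$ reaches into the $ℓ^r$ band the partner is a mid-split $μ(\ldots, ν(\ldots))$ (case c) or an outsourced $μ^2(α_k, ν(α_{k-1}, \ldots, α_1))$ (case d), again with the appropriate $μ^2$-contraction; and in the top-heavy configuration (case e) the partners are $μ^2(α_k, ν(α_{k-1}, \ldots, α_1))$ and $ν(\ldots, μ^2(α_k, α_{k-1}), \ldots)$. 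In every partition the magic angles of the participating $ν$-evaluations, drawn as grey rings around $m$, differ only by the sectors that $α_1$ or the split sweeps out, so each ray from $m$ meets an even number of rings and the weights $\#ν^1$ telescope to zero.

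The main obstacle is the sign bookkeeping, not the geometry. The split sign $(-1)^{‖α_2‖ + \ldots + ‖α_j‖}$ depends on the split index, $μ^2$ contributes $(-1)^{|α_1|}$, and the Hochschild differential attaches its own Koszul sign to each of the $μ(ν)$ and $ν(μ)$ terms; these must be matched against the partner terms, whose split indices and magic angles are different from those of the original term. I expect to control this exactly as in the earlier lemmas: reduced degrees along a disk sequence sum to even parity, so moving the split past a complete disk alters no sign, and $‖ℓ_m^r‖$ is odd, which is what flips the contraction term relative to the two $ν$-evaluations and thereby closes each triple.
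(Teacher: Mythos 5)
Your cases (b)--(e) line up with the paper's own proof: the partner terms you list are exactly those of partitions 3, 21, 22 and 29, which the paper assigns to the sub-cases (b)--(e) of this lemma, and your cancellation mechanism (additivity of $\#ν^1$ over sectors, even number of grey rings per ray) is the paper's. The problem is your case (a). The paper's first case is not a parking garage: it is the configuration where $α_1$ is \emph{shorter} than the interior angle of the inner $ν$ at the closing corner of its orbigon and the tail arc of $α_1$ cuts the orbigon \emph{before reaching} $m$. That configuration cancels in a simple pair
\begin{equation*}
μ^2 (ν^{≥2} (α_k, …, α_2), α_1) + ν(…, μ(α_t, …, α_1)),
\end{equation*}
where the inner $μ$ eats the sub-disk cut off by the tail arc of $α_1$ and the outer $ν$ remains middle-split with unchanged magic angle. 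You never produce this partner, so the configurations in which $α_1$ ends strictly between the closing corner and the puncture are left uncancelled.

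The case you substitute for it does not arise here. For a middle-split $ν$ the value $ν^{≥2}(α_k,…,α_2)$ is a multiple of the out-part $γ$ sitting at the closing corner of the orbigon, and $μ^2(γ,α_1)$ merely prolongs that corner by $α_1$; no angle of the resulting sequence is an angle at $m$ that becomes a disk sequence after prolongation by $ℓ^r$, which is what a parking garage requires (there $ℓ^r$ prolongs an existing angle $α_i$, whereas in a middle split $ℓ^r$ is inserted as a separate corner). Moreover, since the turning angle of a middle-split $ν$ is by definition $ℓ^r$, your criterion ``$α_1$ together with the turning of the inner $ν$ overshoots $ℓ^r$'' is vacuous and does not delimit any genuine sub-case. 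Replace your case (a) by the short-$α_1$, cut-before-$m$ configuration with the pair cancellation above, and the case analysis matches the paper's and becomes exhaustive.
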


\begin{proof}
Distinguish cases: (a) The angle $ α_1 $ is shorter than the corresponding interior angle of $ ν $ and starts after $ m $. (b) The angle $ α_1 $ is shorter and starts at $ m $. (c) The angle $ α_1 $ is shorter and starts before $ m $. (d) The angle $ α_1 $ is longer. (e) The split is at $ (α_{k-1}, α_k) $ and $ α_1 $ is at least as long as the interior angle after the split.

Regard case (a). Then we have a cancellation
\begin{equation*}
μ^2 (ν^{≥2} (α_k, …, α_2), α_1) + ν(…, μ(α_t, …, α_1)).
\end{equation*}
It is readily checked that the two magic angles agree.

Regard case (b). Then we have a cancellation
\begin{equation*}
μ^2 (ν^{≥2} (α_k, …, α_2), α_1) + μ(…, ν(α_s, …, α_1)) + ν(…, μ^2 (α_{s+1}, α_s), …).
\end{equation*}

Regard case (c). Then we have a cancellation
\begin{equation*}
μ^2 (ν^{≥2} (α_k, …, α_2), α_1) + μ(…, ν(α_t, …, α_1)) + ν(…, μ^2 (α_{s+1}, α_s), …).
\end{equation*}
Note in case there is no arc reaching to $ m $ within $ α_1 $, then the last term vanishes, but the first two are then already equal.

Regard case (d). Then we have a cancellation
\begin{equation*}
μ^2 (ν (α_k, …, α_2), α_1) + ν(…, μ^2 (α_{s+1}, α_s), …) + μ^2 (α_k, ν(α_{k-1}, …, α_1)).
\end{equation*}

Regard case (e). Then we have a similar cancellation to (d), namely
\begin{equation*}
μ^2 (ν (α_k, …, α_2), α_1) + ν(μ^2 (α_k, α_{k-1}), …) + μ^2 (α_k, ν(α_{k-1}, …, α_1)).
\end{equation*}
\end{proof}

\begin{lemma}
\label{th:hochschild-construction-mu2-nu-outsource-2}
Any contribution $ μ^2 (ν^{≥2} (α_k, …, α_2), α_1) $ with $ ν $ first-out middle split, or end-split with turning angle $ < ℓ^r $, or end-split with turning angle $ ℓ^r $ cancels.
\end{lemma}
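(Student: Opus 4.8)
The plan is to follow the template of the neighbouring lemmas \autoref{th:hochschild-construction-mu2-nu-mid-outsource-1} and \autoref{th:hochschild-construction-mu-nu1-outsource}: in each configuration the inner evaluation $\nu(\alpha_k,\dots,\alpha_2)$ is an explicit angle read off from \autoref{def:even-construction-def}, the outer $\mu^2$ merely composes $\alpha_1$ on the right, and the task is to exhibit the one or two companion terms of $d\nu(\alpha_k,\dots,\alpha_1)$ that cancel it. Concretely, for a first-out middle-split $\nu$ the result is a scalar multiple of the first-out angle, while for an end-split $\nu$ it is a multiple of $\alpha^{-1}\ell^r_m$ (turning angle $<\ell^r$) or again of the first-out angle (turning angle $\ell^r$), both winding around $m$. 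I would organise the proof by the three input types for $\nu$ named in the statement, and within the end-split types by the length of $\alpha_1$ relative to the turning angle $\alpha$.

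First I would dispose of the first-out middle-split case. Here $\mu^2$ lengthens the first-out angle from $\beta$ to $\beta\alpha_1$ without altering the disk along which the sequence folds, so the magic angle, and hence its $\#\nu^1$, is unchanged. The companion is the term that first contracts $\alpha_1$ into its neighbour by an inner $\mu^2$ and then applies the very same middle-split $\nu$, i.e.\ $\nu(\dots,\mu^2(\alpha_2,\alpha_1))$; this is the pair cancellation recorded in the partition table and settled against \autoref{th:hochschild-construction-nu-mu2}.

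For the two end-split types I would compare $\alpha_1$ with $\alpha$. When the winding still needed to complete $r$ full turns is positive, $\alpha_1$ either stays short of $m$ or reaches past it; accordingly the companion is a genuine $\nu^1(\mu(\dots))$ term (cf.\ \autoref{th:hochschild-construction-nu1-mu}), or an inner $\mu$ absorbed into a shortened end-split $\nu$, paired with the $\mu^2(\dots,\nu^1(\alpha_1))$ term (cf.\ \autoref{th:hochschild-construction-mu-nu1-outsource}), giving the triple cancellations of the two relevant partition rows. The coefficients telescope because $\#\nu^1$ is additive on composable angles around $m$, so the scalar attached to the long result splits as $\#\nu^1(\alpha)=\#\nu^1(\alpha_1)+\#\nu^1(\alpha_1^{-1}\alpha)$, or its $\ell^r$-analogue, exactly matching the magic angles involved; in the borderline situations $\alpha_1=\alpha$ and $\alpha=\ell^r$ the triple degenerates to a pair cancellation against \autoref{th:hochschild-construction-nu-mu2}.

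The main obstacle will be the sign bookkeeping. Each term in a triple carries a Koszul sign from the Gerstenhaber product in $[\mu,\nu]$, an internal sign from the end-split or middle-split rule, and, for the end-split result, the factor $(-1)^{\|\alpha\|}$ whose parity shifts as $\alpha_1$ grows. The delicate point is to verify that these combine so that the additive $\#\nu^1$-identity is applied with uniform sign, and that each borderline configuration ($\alpha_1=\alpha$, $\alpha=\ell^r$, and $\alpha_1$ ending exactly at $m$) is counted exactly once. As elsewhere in this section I would control the signs using the principle stated after \autoref{def:even-construction-def} that the accumulated sign consumes precisely the reduced degrees of the angles between $\min I_s$ and the split actually taken, together with the fact that the reduced degrees along any disk sequence sum to even parity.
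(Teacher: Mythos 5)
Your overall strategy coincides with the paper's: split by the three types of inner $\nu$, and within the end-split types compare $\alpha_1$ with the turning angle $\alpha$. Your case (a) pair $\mu^2(\nu(\dots),\alpha_1)+\nu(\dots,\mu^2(\alpha_2,\alpha_1))$ and your two triples for $\alpha_1<\alpha$ (with $\mu(\alpha_k,\dots,\nu^1(\alpha_1))$ and $\nu(\alpha_k,\dots,\mu(\alpha_t,\dots,\alpha_1))$) and for $\alpha_1\ge\alpha$ (with $\mu(\alpha_k,\dots,\nu^1(\alpha_1))$ and $\nu^1(\mu(\alpha_k,\dots,\alpha_1))$) are exactly the paper's cases (a), (b) and (c), matching rows 15, 16, 23 and 24 of the partition table.

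There is, however, one concrete misattribution in your case analysis. The paper's fourth case --- $\nu$ end-split with turning angle $\ell^r$ and all-in, with $\alpha_1$ \emph{not winding around} $m$ --- is where the pair cancellation against \autoref{th:hochschild-construction-nu-mu2}, namely $\mu^2(\nu^{\ge2}(\alpha_k,\dots,\alpha_2),\alpha_1)+\nu^{\ge2}(\alpha_k,\dots,\mu^2(\alpha_2,\alpha_1))$, actually occurs: the inner $\nu$ returns a multiple of the identity, so $\alpha_1$ may attach from the side opposite to $m$, in which case neither $\nu^1(\alpha_1)$ nor any inner $\mu$ through $\alpha_1$ exists and only the $\mu^2$-contraction companion survives (one then checks both evaluations are simultaneously old-era or new-era, so the magic angles agree). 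You instead locate the degeneration at the boundary values $\alpha_1=\alpha$ and $\alpha=\ell^r$; but those configurations are still covered by the triples of your cases (b)/(c) (at $\alpha_1=\alpha$ the term $\nu^1(\mu(\alpha_k,\dots,\alpha_1))$ vanishes because $\mu$ is all-in and $\nu^1(\id)=0$, and the remaining two terms cancel as a pair --- the partner is $\mu(\dots,\nu^1(\alpha_1))$, not a $\nu(\dots,\mu^2(\dots))$ term). So you should add the missing case ``$\alpha_1$ does not wind around $m$'' and reassign the $\nu(\dots,\mu^2(\alpha_2,\alpha_1))$ companion to it; otherwise that family of contributions is left uncancelled in your bookkeeping.
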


\begin{proof}
Distinguish cases: (a) $ ν $ is first-out middle-split. (b) $ ν $ is end-split $ ≤ ℓ^r $, and $ α_1 $ winds around $ m $, and $ α_1 $ is shorter than the turning angle of $ ν $. (c) $ ν $ is end-split, and $ α_1 $ winds around $ m $, and $ α_1 $ is at least as long as the turning angle of $ ν $. (d) $ ν $ is end-split $ ℓ^r $ and all-in, and $ α_1 $ does not wind around $ m $.

Regard case (a). Then $ α_1 $ can be appended to $ α_2 $, still forming a middle-split $ ν $. Therefore we have a simple cancellation
\begin{equation*}
μ^2 (ν^{≥2}, …) + ν^{≥2} (…, μ^2 (α_2, α_1)).
\end{equation*}
In case (b), we have a cancellation
\begin{equation*}
μ^2 (ν^{≥2} (α_k, …, α_2), α_1) + μ(α_k, …, α_2, ν^1 (α_1)) + ν(α_k, …, μ(α_t, …, α_1)).
\end{equation*}
In case (c), we have the cancellation
\begin{equation*}
μ^2 (ν^{≥2} (α_k, …, α_2), α_1) + μ(α_k, …, ν^1 (α_1)) + ν^1 (μ(α_k, …, α_1)).
\end{equation*}
Regard case (d). Then $ α_1 $ is composable with $ α_2 $ and we have the cancellation
\begin{equation*}
μ^2 (ν^{≥2} (α_k, …, α_2), α_1) + ν^{≥2} (α_k, …, μ^2 (α_2, α_1)).
\end{equation*}
Since this only changes the first angle of the $ ν $, the two terms are either both old-era or both new-era, and hence have the same magic angle and produce an equal result.
\end{proof}

\begin{lemma}
\label{th:hochschild-construction-mu2-x-nu2-outsource-1}
A contribution $ μ^2 (α_k, ν^{≥2} (α_{k-1}, …)) $ with $ ν $ first-out or all-in middle-split cancels.
\end{lemma}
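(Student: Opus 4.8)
The plan is to follow the template already established by the companion lemmas \autoref{th:hochschild-construction-mu2-nu-outsource-2} and \autoref{th:hochschild-construction-mu2-nu-mid-outsource-1}: unwind the inner middle-split $ν$, partition into cases according to how the post-composed angle $α_k$ sits relative to the disk cut out by the middle split, and in each case exhibit the cancelling pair or triple. The four cases will be the ones the partition table demands, namely the contributions of type $μ^2(α_k, ν^{≥2}(α_{k-1}, …, α_1))$ appearing in rows 25, 6, 10 and 22.

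First I would record the data supplied by the inner $ν$. Since $ν^{≥2}(α_{k-1}, …, α_1)$ is first-out or all-in middle-split, there is an index $0 < i < k-1$ with $α_1, …, α_i, ℓ^r, α_{i+1}, …, α_{k-1}$ a disk sequence; the magic angle is the splitting angle $α$ of $(i, α_i, α_{i+1})$, and the value is $(-1)^{‖α_1‖ + … + ‖α_i‖} \#ν^1(α)$ times the first-out angle $β$ (or the identity, in the all-in case). Post-composing by $α_k$ replaces this output by $α_k β$, so the effect of $α_k$ is to prolong the boundary path past the head of the $ν$-output. The decisive question is whether $α_k$ together with the subsequent angles closes up into a further disk and how long that prolongation is; this is exactly what the case distinction tracks.

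Then I would split into the four cases. Case (a) is when $α_k$ is absorbed into an inner product $μ^{≥3}(α_k, …, α_m)$ whose result serves as the first-out angle of an outer middle-split $ν$; swapping the order in which $μ$ and $ν$ are applied yields the pairwise cancellation
\begin{equation*}
μ^2(α_k, ν^{≥2}(α_{k-1}, …, α_1)) + ν^{≥2}(μ^{≥3}(α_k, …, α_m), …, α_1),
\end{equation*}
both terms being middle-split at the same index and hence carrying the same magic angle $α$. Cases (b), (c), (d) are the configurations in which $α_k$ is long enough that a second inner $ν$ (end-split, or middle-split at a shifted index) becomes available; each then cancels in a triple together with a $μ^2$-contraction of the form $ν^{≥2}(…, μ^2(α_{j+1}, α_j), …)$ and one further term contributed by the neighbouring lemma, precisely as the partition table records for the respective rows.

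The verification reduces to matching magic angles and signs in each pair or triple. For the magic angles I would invoke the even-covering criterion of the remark preceding the partition table: drawing the magic angles of the cancelling terms as grey rings around $m$, one checks that every ray from $m$ meets an even number of rings, so that by additivity of $\#ν^1$ the signed sum of scalars vanishes. For the signs I would use the united sign pattern noted earlier — the sign of a middle-split term consumes exactly the reduced degrees between $\min I_s$ and the split actually taken, and $‖ℓ^r‖$ is odd — which forces opposite signs on the terms being matched. The main obstacle, as in the triple cases of the companion lemmas, is pinning down the split index, and hence the magic angle, of the \emph{second} $ν$ term in cases (b)–(d) at the borderline where $α_k$ exactly completes $ℓ^r$ and the middle-split contribution degenerates into an end-split one; this borderline is the reason these cases are routed through three different neighbouring lemmas rather than handled by a single uniform argument.
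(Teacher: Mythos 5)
Your proposal follows essentially the same route as the paper: the same four cases corresponding to rows 25, 6, 10 and 22 of the partition table, with the same cancelling pair in case (a) (obtained by swapping the order of $ μ $ and $ ν $) and the same triples, each containing a $ ν^{≥2}(…, μ^2(α_{s+1}, α_s), …) $ contraction, in the remaining cases. The only looseness is in how you delimit cases (b)–(d): the paper's criterion is where the target arc of $ α_k $ stops relative to the puncture $ m $ (at it, after it, or whether $ α_k $ exceeds the corresponding interior angle of $ ν $ altogether), rather than simply "$ α_k $ is long enough", but this does not change the substance of the argument.
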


\begin{proof}
Distinguish cases: (a) The angle $ α_k $ is shorter than the corresponding interior angle of $ ν $, and stops before $ m $. (b) The angle $ α_k $ is shorter and stops at $ m $. (c) The angle $ α_k $ is shorter and stops after $ m $. (d) The angle $ α_k $ is longer.

Regard case (a). Then the target arc of $ α_k $ hits the opposite side of $ ν $ at some angle $ α_t $. We have the cancellation
\begin{equation*}
μ^2 (α_k, ν^{≥2} (α_{k-1}, …)) + ν(μ(α_k, …, α_t), …, α_1).
\end{equation*}
Regard case (b). Then we have the cancellation
\begin{equation*}
μ^2 (α_k, ν^{≥2} (α_{k-1}, …)) + μ(ν(α_k, …, α_{s+1}), …, α_1) + ν(…, μ^2 (α_{s+1}, α_s), …).
\end{equation*}
Regard case (c). Then the target arc of $ α_k $ hits the opposite side of $ ν $ at some angle $ α_t $. Then we have the cancellation
\begin{equation*}
μ^2 (α_k, ν^{≥2} (α_{k-1}, …)) + ν(…, μ^2 (α_{s+1}, α_s), …) + μ(ν(α_k, …, α_t), …, α_1).
\end{equation*}
Regard case (d). Then we have the cancellation
\begin{equation*}
μ^2 (α_k, ν^{≥2} (α_{k-1}, …)) + μ^2 (ν(α_k, …, α_2), α_1) + ν(…, μ^2 (α_{s+1}, α_s), …).
\end{equation*}
\end{proof}

\begin{lemma}
\label{th:hochschild-construction-mu2-x-nu2-outsource-2}
A contribution $ μ^2 (α_k, ν^{≥2} (α_{k-1}, …)) $, with $ ν $ end-split with turning angle $ ℓ^r $ cancels.
\end{lemma}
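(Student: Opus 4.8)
The contribution in question is $\mu^2(\alpha_k, \nu^{\geq 2}(\alpha_{k-1}, \ldots, \alpha_1))$ with the inner $\nu$ end-split of turning angle $\ell^r$; by \autoref{def:even-construction-def} it is either old-era or new-era, and in either case the underlying sequence $\alpha_1, \ldots, \alpha_{k-1}, \ell^r$ is a disk sequence, so the angles $\alpha_1, \ldots, \alpha_{k-1}$ trace a disk winding $r$ times around $m$, with $\alpha_k$ attached at its head. The plan is to classify the outer sequence according to how $\alpha_k$ sits on this disk and, for each shape, to name the one or two partners in $d\nu$ with which the term cancels. I will not track signs term by term; instead I rely on the ring criterion from the Remark following the partition table: since $\#\nu^1$ is additive, a signed collection of $\nu$-terms cancels as soon as every indecomposable angle around $m$ occurs in an even number of the magic angles, that is, as soon as every ray from $m$ meets an even number of the gray rings drawn in entries 26--29.

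I would distinguish four cases after the length of $\alpha_k$ and the position of its head relative to $m$, in parallel with \autoref{th:hochschild-construction-mu2-x-nu2-outsource-1}. \textbf{(a)} The borderline configuration in which $\alpha_k$ itself winds around $m$, so that $\nu^1(\alpha_k)$ caps off the disk and an inner $\mu$ on the top of the sequence is also available; here one obtains the triple
\begin{equation*}
\mu^2(\alpha_k, \nu(\alpha_{k-1}, \ldots, \alpha_1)) + \mu(\nu^1(\alpha_k), \alpha_{k-1}, \ldots, \alpha_1) + \nu(\mu(\alpha_k, \ldots, \alpha_m), \ldots, \alpha_1),
\end{equation*}
the two partners being supplied by \autoref{th:hochschild-construction-mu-nu1} and \autoref{th:hochschild-construction-nu-mu-end-small}. \textbf{(b)} The angle $\alpha_k$ does not wind around $m$ and merely extends $\alpha_{k-1}$; it is absorbed into the first angle of the inner $\nu$ and the term cancels in a pair with $\nu(\mu^2(\alpha_k, \alpha_{k-1}), \ldots, \alpha_1)$ from \autoref{th:hochschild-construction-nu-mu2}. \textbf{(c)} The head of $\alpha_k$ reaches past $m$ and cuts the disk, making an alternative inner $\nu$ on $\alpha_k, \ldots, \alpha_m$ available; this yields a triple with $\mu(\nu(\alpha_k, \ldots, \alpha_m), \ldots, \alpha_1)$ from \autoref{th:hochschild-construction-mu-nu-middle-outsource} and the contraction $\nu(\mu^2(\alpha_k, \alpha_{k-1}), \ldots, \alpha_1)$ from \autoref{th:hochschild-construction-nu-mu2}. \textbf{(d)} The remaining configuration, in which a second inner $\nu$ on $\alpha_k, \ldots, \alpha_2$ exists, giving a triple with that same contraction and with $\mu^2(\nu(\alpha_k, \ldots, \alpha_2), \ldots, \alpha_1)$ from \autoref{th:hochschild-construction-mu2-nu-mid-outsource-1}. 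For each case I would reproduce the gray rings of entries 26--29 and read off that every ray from $m$ crosses an even number of them, which by additivity of $\#\nu^1$ collapses the signed sum of scalar coefficients to zero.

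The old-era versus new-era distinction costs only one extra sentence, exactly as at the end of \autoref{th:hochschild-construction-nu-mu-end-old-long}: prepending a $\gamma$ at the front simultaneously replaces the magic angle $\ell^r$ of both partners in a pair by the successor sector $n$, so each cancellation survives verbatim. The step I expect to be the genuine obstacle is the sign verification in case \textbf{(a)}: it is the only triple featuring a $\nu^1$-evaluation alongside $\nu^{\geq 2}$-evaluations, and there the sign of the Hochschild differential must be reconciled with the $(-1)^{\|\alpha_1\| + \cdots}$ prefactors of the middle-split and end-split rules. I would settle this with the same bookkeeping as in the parking-garage computation, using that $\|\ell_m^r\|$ is odd and that the reduced degrees along a disk sequence sum to even parity, which together force the three signs to align rather than merely the magic-angle scalars to agree in absolute value.
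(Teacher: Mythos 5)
Your proposal is correct and takes essentially the same route as the paper: the same four configurations (matching partition entries 26--29) with the same cancelling partners, merely organized by the geometry of $\alpha_k$ rather than by whether the inner $\nu$ is all-in, final-out or first-out. The one detail you elide is the paper's further subdivision of your case (a) into $\alpha_k < \ell^r$ versus $\alpha_k \geq \ell^r$, where the third partner switches from $\nu^{\geq 2}(\mu(\alpha_k, \ldots, \alpha_t), \ldots, \alpha_1)$ to $\nu^1(\mu(\alpha_k, \ldots, \alpha_1))$.
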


\begin{proof}
Distinguish cases: (a) $ ν $ is all-in and $ α_k $ starts at the opposite side of the split angle. (b) $ ν $ is final-out. (c) $ ν $ is first-out and not final-out, and $ α_k $ is shorter than angle in $ ν $ after the split. (d) $ ν $ is first-out and not final-out, and $ α_k $ is longer.

Regard case (a). Distinguish cases: (a1) $ α_k < ℓ^r $. (a2) $ α_k ≥ ℓ^r $. In case (a1), the target of $ α_k $ touches the opposite side of the $ ν $ orbigon at some angle $ α_t $ and we have the cancellation
\begin{equation*}
μ^2 (α_k, ν(α_{k-1}, …, α_1)) + μ(ν^1 (α_k), …, α_1) + ν(μ(α_k, …, α_t), …, α_1).
\end{equation*}
In case (a2), we have the cancellation
\begin{equation*}
μ^2 (α_k, ν(α_{k-1}, …, α_1)) + μ(ν^1 (α_k), …, α_1) + ν^1 (μ(α_k, …, α_1)).
\end{equation*}

Regard case (b). Then $ α_k $ is composable with $ α_{k-1} $ and we have a simple cancellation
\begin{equation*}
μ^2 (α_k, ν^{≥2} (α_{k-1}, …)) + ν(μ^2 (α_k, α_{k-1}), …, α_1).
\end{equation*}
Here both $ ν $ are new-era. 

Regard case (c). The target arc of $ α_k $ hits the opposite side of $ ν $ at some angle $ α_t $ and we have the cancellation
\begin{equation*}
μ^2 (α_k, ν^{≥2} (α_{k-1}, …)) + μ(ν(α_k, …, α_t), …, α_1) + ν(μ^2 (α_k, α_{k-1}), …, α_1).
\end{equation*}
Regard case (d). We have the cancellation
\begin{equation*}
μ^2 (α_k, ν^{≥2} (α_{k-1}, …)) + ν(μ^2 (α_k, α_{k-1}), …, α_1) + μ^2 (ν(α_k, …, α_2), α_1).
\end{equation*}
\end{proof}

\begin{lemma}
\label{th:hochschild-construction-mu2-x-nu2-outsource-3}
A contribution $ μ^2 (α_k, ν^{≥2} (α_{k-1}, …)) $, with $ ν $ final-out middle-split, or end-split with turning angle $ < ℓ^r $ cancels.
\end{lemma}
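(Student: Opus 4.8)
The plan is to read this statement as the last case in the exhaustive analysis of contributions of shape $\mu^2(\alpha_k, \nu^{\geq 2}(\alpha_{k-1}, \ldots))$ to $d\nu$. Indeed, \autoref{th:hochschild-construction-mu2-x-nu2-outsource-1} settles first-out and all-in middle-split $\nu$, and \autoref{th:hochschild-construction-mu2-x-nu2-outsource-2} settles end-split $\nu$ with turning angle $\ell^r$; together with the two shapes treated here, every possible inner $\nu$ is accounted for. I would therefore split the proof along the two remaining shapes: (a) $\nu$ final-out middle-split, and (b) $\nu$ end-split with turning angle $\alpha < \ell^r$. In each case the strategy is the one used throughout \autoref{sec:even-other}: exhibit the cancelling partner(s), observe that each participating $\nu$- or $\nu^1$-term is weighted by $\#\nu^1$ of its magic angle, and check that these weighted magic angles cancel because every indecomposable angle around $m$ is covered an even number of times.

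For case (a) I expect a plain pair cancellation. A final-out middle-split $\nu$ carries the out-part on its leftmost input, say $\gamma\alpha_{k-1}$, so that $\nu^{\geq 2}(\gamma\alpha_{k-1}, \ldots, \alpha_1)$ equals $\pm \#\nu^1(\alpha)\gamma$ with $\alpha$ the splitting angle. Prepending $\alpha_k$ via $\mu^2$ only replaces $\gamma$ by $\alpha_k\gamma$, and the crucial observation is that $\mu^2(\alpha_k, \gamma\alpha_{k-1})$ is again final-out middle-split at the \emph{same} internal split, hence with the \emph{same} splitting angle $\alpha$. Consequently $\mu^2(\alpha_k, \nu^{\geq 2}(\gamma\alpha_{k-1}, \ldots, \alpha_1))$ and $\nu^{\geq 2}(\mu^2(\alpha_k, \gamma\alpha_{k-1}), \ldots, \alpha_1)$ carry the identical coefficient $\#\nu^1(\alpha)$ and cancel with opposite Hochschild sign; this is partition 30, the partner being supplied by \autoref{th:hochschild-construction-nu-mu2}. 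Since $\|\ell_m^r\|$ is odd, the prolongation does not disturb the sign rule of the middle-split contribution, so no further subcase is needed here.

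Case (b) should be the substantial one and a triple cancellation. Now $\nu^{\geq 2}(\alpha_{k-1}, \ldots, \alpha_1) = (-1)^{\|\alpha\|}\#\nu^1(\alpha)\,\alpha^{-1}\ell_m^r$ winds around $m$, and $\mu^2(\alpha_k, -)$ prepends $\alpha_k$. I would distinguish by the length of $\alpha_k$. When $\alpha_k$ is short its arc cuts across the complementary sector and meets the opposite side of the orbigon, so an inner product $\mu^{\geq 3}$ and a $\nu^1$-evaluation of $\alpha_k$ also become available; the cancelling triple is $\mu^2(\alpha_k, \nu^{\geq 2}(\alpha_{k-1}, \ldots, \alpha_1))$ together with $\mu(\nu^1(\alpha_k), \alpha_{k-1}, \ldots)$ and $\nu^{\geq 2}(\mu^{\geq 3}(\alpha_k, \ldots, \alpha_{m+1}), \alpha_m, \ldots)$, which is exactly partition 13 and calls on \autoref{th:hochschild-construction-mu-nu1} and case (a3) of \autoref{th:hochschild-construction-nu-mu-end-small}. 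When $\alpha_k$ is long the prolongation $\alpha_k\ell^r$ already closes the disk, so $\mu(\nu^1(\alpha_k), \ldots)$ is replaced by $\nu^1(\mu^{\geq 3}(\alpha_k, \ldots, \alpha_1))$; this is the left–right mirror of partition 15 and may be suppressed by appealing to the symmetry with \autoref{th:hochschild-construction-mu2-nu-outsource-2}. In both regimes cancellation follows from additivity of $\#\nu^1$ on angles and the grey-ring parity count.

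I expect the main obstacle to be the sign verification in case (b), where three terms of genuinely different shape — $\mu^2(\nu)$, $\mu(\nu^1)$ and $\nu(\mu)$ — must be matched. One has to track simultaneously the reduced-degree sign $(-1)^{\|\alpha\|}$ from the end-split rule, the concatenation sign $(-1)^{|\cdot|}$ from $\mu^2$, the middle-split sign of the $\nu(\mu)$ term, and the global Hochschild differential sign, and confirm that after all of them collapse the residual coefficients $\#\nu^1(\alpha)$, $\#\nu^1(\alpha_k)$ and $\#\nu^1$ of the cut sector sum to zero. The standard device that makes this work, namely that reduced degrees add up to even parity along any disk sequence, is exactly what I would use; I would carry out the short regime (partition 13) in full and defer the long regime to the symmetry argument.
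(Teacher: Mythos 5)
Your proposal matches the paper's proof: case (a) is the same pair cancellation with $\nu(\mu^2(\alpha_k,\alpha_{k-1}),\ldots,\alpha_1)$, and case (b) is split by the length of $\alpha_k$ against the turning angle into exactly the paper's two triples, namely partition 13 for short $\alpha_k$ and the mirror of partition 15 for long $\alpha_k$. One small wording slip: in the long regime it is the term $\nu(\mu(\alpha_k,\ldots,\alpha_t),\ldots)$ that becomes $\nu^1(\mu(\alpha_k,\ldots,\alpha_1))$, while $\mu(\nu^1(\alpha_k),\ldots,\alpha_1)$ survives in both regimes --- as your own identification with the mirror of partition 15 already implies.
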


\begin{proof}
Distinguish cases: (a) $ ν $ is final-out middle-split. (b) $ ν $ is end-split with turning angle $ < ℓ^r $.

Regard case (a). Then we have a simple cancellation
\begin{equation*}
μ^2 (α_k, ν^{≥2} (α_{k-1}, …)) + ν(μ^2 (α_k, α_{k-1}), …, α_1).
\end{equation*}
Regard case (b). Distinguish cases: (b1) $ α_k $ is smaller than the turning angle of $ ν $. (b2) $ α_k $ is at least the turning angle of $ ν $. In case (b1), the target of $ α_k $ touches the opposite side of the $ ν $ orbigon at some angle $ α_t $ and we have the cancellation
\begin{equation*}
μ^2 (α_k, ν(α_{k-1}, …, α_1)) + μ(ν^1 (α_k), …, α_1) + ν(μ(α_k, …, α_t), …, α_1).
\end{equation*}
In case (b2), we have the cancellation
\begin{equation*}
μ^2 (α_k, ν(α_{k-1}, …, α_1)) + μ(ν^1 (α_k), …, α_1) + ν^1 (μ(α_k, …, α_1)).
\end{equation*}
\end{proof}

\begin{lemma}
\label{th:hochschild-construction-nu-mu2}
Any contribution $ ν^{≥2} (…, μ^2 (…), …) $ cancels.
\end{lemma}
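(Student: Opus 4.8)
The plan is to reduce this statement entirely to the cancellation lemmas proven above, since a contribution $ ν^{≥2}(…, μ^2(…), …) $ never acts as the primary term of a cancellation but always appears as a partner in the pairs and triples recorded in the partition table. The starting observation is that $ μ^2 $ is mere concatenation, $ μ^2(α_{i+1}, α_i) = (-1)^{|α_i|} α_{i+1} α_i $, so that $ ν^{≥2}(…, μ^2(α_{i+1}, α_i), …) $ equals, up to sign, the value of $ ν $ on the merged sequence $ s' = α_1, …, α_{i-1}, α_{i+1}α_i, α_{i+2}, …, α_k $. By \autoref{def:even-construction-def} this is nonzero only when $ s' $ is one of the four distinguished shapes (end-split with turning angle $ < ℓ^r $, old- or new-era end-split with turning angle $ ℓ^r $, or middle-split), so I would classify the contribution by the distinguished type of $ s' $.

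First I would fix the distinguished type of $ s' $ and, within it, record the position of the reinserted bond $ α_i \mid α_{i+1} $ relative to the defining data of that type, that is, whether the bond lies inside the disk part, straddles the turning angle, or sits at the magic angle (for a middle-split, the splitting angle). Each admissible combination of (type of $ s' $, bond position) is realized by exactly one row of the partition table, in which our term occurs explicitly as the $ ν(…, μ^2(α_{i+1}, α_i), …) $ entry. For instance, a merged sequence end-split with turning angle $ < ℓ^r $ produces the triples whose cancellation is the content of \autoref{th:hochschild-construction-mu-nu-end-small-mid}, \autoref{th:hochschild-construction-mu-nu-end-small-start} and \autoref{th:hochschild-construction-mu-nu-end-small-end}; a merged old- or new-era end-split sequence with turning angle $ ℓ^r $ falls under \autoref{th:hochschild-construction-nu-mu-end-old-long}; and a merged middle-split sequence falls under \autoref{th:hochschild-construction-nu-mu-mid}, \autoref{th:hochschild-construction-mu-nu-mid-split} and the outsourcing lemmas. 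In each case the partner terms and their signs are already fixed, and the cancellation follows from the cited lemma. The cancellation itself need not be recomputed: by the remark opening this subsection it suffices that within each partition every indecomposable angle around $ m $ appears in an even number of magic angles, which is visible from the grey rings in the corresponding figure, since every ray from $ m $ meets an even number of rings. As the magic angle of the $ ν\mu^2 $ term is read off from $ s' $ exactly as in those figures, no new sign computation arises.

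The main obstacle is completeness rather than the individual cancellations: one must verify that the classification above is exhaustive, so that every nonzero $ ν^{≥2}(…, μ^2(…), …) $ contribution is hit by some row of the table, and that it is hit only once. Two points deserve care. When $ s' $ is end-split with turning angle $ ℓ^r $, its magic angle depends on whether the evaluation is old-era or new-era, so the bond position near the head of $ s' $ must be tracked to select the correct partner, in analogy with the old-era/new-era discussion of \autoref{th:hochschild-construction-nu-mu-end-old-long}. And when the reinserted bond lands at the splitting angle of a middle-split $ s' $, one must confirm that breaking the bond does not alter the splitting set $ I_{s'} $ in a way that changes the magic angle. Following the stated scope of this section, I would carry out this completeness check at the level of the geometric pictures rather than term by term, trusting the parity argument of the remark to secure the signs; this then establishes, together with the preceding lemmas, that $ dν = 0 $, as summarized in \autoref{th:even-other-conclusion}.
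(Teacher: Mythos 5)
Your proposal is correct and follows essentially the same route as the paper: the paper also treats the $ν^{≥2}(\dots,μ^2(\dots),\dots)$ term as a partner rather than a primary term, classifying it by how the merged angle $α_{m+1}α_m$ sits inside the $ν$ evaluation (ordinary angle splitting the orbigon, ordinary angle creating a new split, or first-/final-out angle of an end-split $ν$) and then deferring to the previously established cancellation lemmas and the partition table. Your reformulation via the distinguished type of the merged sequence $s'$ and the bond position is the same case analysis in slightly different clothing, and your caveats about exhaustiveness and single-counting match the level of rigor the paper itself admits to in this section.
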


\begin{proof}
Write this sequence as $ ν^{≥2} (…, μ^2 (α_{m+1}, α_m), …) $. Distinguish cases: (a) The product $ α_{m+1} α_m $ is an ordinary angle of $ ν $, and $ α_m $ splits the $ ν $ orbigon into two. (b) The product $ α_{m+1} α_m $ is an ordinary angle of $ ν $, and $ α_m $ and $ α_{m+1} $ create another split of the orbigon. (c) $ α_{m+1} α_m $ is a first-out or final-out angle of end-split $ ν $. (d) $ α_{m+1} α_m $ is a final-out angle of end-split $ ν $.

In case (a), we can produce a $ ν^{≥2} (…, μ^{≥3} (…), …) $. In case (b), we can generally produce a $ μ^{≥3} (…, ν^{≥2} (α_m, …, α_{s+1}), …) $. In both cases, we fall already into the regime of the previous lemmas.

Regard case (c). Then again depending on the configuration of $ α_m $ and $ α_{m+1} $ we can split off a disk sequence from $ ν $ and land in one of the cases already dealt with.
\end{proof}

\begin{proposition}
\label{th:even-other-conclusion}
The cochain $ ν ∈ \HC(\Gtl \cA) $ is a Hochschild cocycle: $ ν ∈ \Ker(d) $.
\end{proposition}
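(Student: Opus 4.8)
The plan is to verify $ dν = 0 $ pointwise, that is, to show that $ (dν)(α_k, …, α_1) $ vanishes for every sequence of composable angles. Since $ ν^0 = 0 $, the differential $ dν = [μ, ν] $ expands, when applied to a fixed sequence, into a finite signed sum of terms of two kinds: outer-product terms $ μ^{≥2}(…, ν^{≥1}(…), …) $, in which a value of $ ν $ is inserted into one slot of the product $ μ $, and outer-$ ν $ terms in which a value of $ μ $ is inserted into one slot of $ ν $, including the arity-one case $ ν^1(μ^{≥3}(…)) $. Because $ ν^{≥2} $ is supported only on the distinguished sequence types of \autoref{def:even-construction-def}, each such term can be nonzero only when the inner and outer data match one of finitely many combinatorial shapes, indexed by the type of the inner $ ν $ (end-split with turning angle $ < ℓ^r $; old- or new-era end-split with turning angle $ ℓ^r $; middle-split; or the $ 1 $-adic piece $ ν^1 $) together with the position of the inner operation and the type of the outer operation.

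First I would dispose of the terms attached to parking-garage sequences. By the two lemmas of \autoref{sec:even-parking}, the whole block of hard-to-pair terms produced by a parking-garage sequence — together with its variants carrying a $ γ $ in front and/or a $ β $ in back — sums to zero as a whole. This settles precisely those contributions that resist pairwise cancellation, namely the ones arising when prolonging some $ α_i $ by $ r $ turns around $ m $ closes a disk.

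Next I would invoke the lemmas of \autoref{sec:even-other}, organized so that every nonzero term of $ dν $ on a non-garage sequence falls under exactly one of them. Concretely, \autoref{th:hochschild-construction-mu-nu-end-small-mid}, \autoref{th:hochschild-construction-mu-nu-end-small-start} and \autoref{th:hochschild-construction-mu-nu-end-small-end} treat outer-product terms whose inner $ ν $ is end-split of turning angle $ < ℓ^r $; \autoref{th:hochschild-construction-mu-nu-end-long} treats inner end-split $ ν $ of turning angle $ ℓ^r $; \autoref{th:hochschild-construction-mu-nu-middle-outsource} and \autoref{th:hochschild-construction-mu-nu-mid-split} treat inner middle-split $ ν $; and \autoref{th:hochschild-construction-mu-nu1-outsource} together with \autoref{th:hochschild-construction-mu-nu1} treat inner $ ν^1 $. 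Symmetrically, the outer-$ ν $ terms are covered by \autoref{th:hochschild-construction-nu-mu-end-small} through \autoref{th:hochschild-construction-nu-mu2}, together with \autoref{th:hochschild-construction-nu1-mu}. Each of these lemmas places the term in question inside a cancelling pair or triple; and, as recorded in the remark preceding the partition table, the vanishing of each such group reduces to the purely combinatorial fact that every indecomposable angle around $ m $ occurs in an even number of the relevant magic angles — equivalently, that every ray emanating from $ m $ crosses an even number of grey rings. The partition table then assembles all these pairs and triples and assigns to each its governing lemma.

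I expect the real difficulty to be bookkeeping rather than any single estimate: one must confirm that the assignment of terms to cancelling groups constitutes a genuine partition — every nonzero term of $ dν $ lands in some group (completeness) and no term is used twice (disjointness) — and that the signs imposed by the Hochschild differential agree, within each group, with the sign conventions fixed in \autoref{def:even-construction-def}. Following the policy already adopted in \autoref{sec:even-other}, I would carry out steps 1--4 of the enumeration there in detail and argue the remaining steps 5--6 informally through the even-crossing picture, as the individual lemmas do. Granting completeness and disjointness of the partition, every term of $ dν $ cancels against its partners, so $ (dν)(α_k, …, α_1) = 0 $ on all sequences, and therefore $ ν ∈ \Ker(d) $.
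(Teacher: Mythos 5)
Your proposal follows essentially the same route as the paper's own proof: dispose of the hard, non-pairable contributions via the parking-garage lemmas of \autoref{sec:even-parking}, then assign every remaining term of $d\nu$ to one of the cancelling pairs or triples established in the lemmas of \autoref{sec:even-other}, accepting (as the paper explicitly does) that completeness and disjointness of the partition are argued only informally. The one block you omit is the purely low-adic terms $\nu^1(\mu^2(\alpha_2,\alpha_1))$, $\mu^2(\nu^1(\alpha_2),\alpha_1)$ and $\mu^2(\alpha_2,\nu^1(\alpha_1))$: none of the lemmas you cite covers these, since each of them involves a $\mu^{\geq 3}$ or a $\nu^{\geq 2}$ somewhere, and \autoref{th:hochschild-construction-nu-mu2} only treats $\nu^{\geq 2}$ applied to a $\mu^2$-contraction. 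The paper disposes of exactly this residual case in one line by observing that $\nu^1$ is a derivation with respect to $\mu^2$; with that supplement your partition is complete and your argument coincides with the paper's.
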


\begin{proof}
We have analyzed all terms $ μ(…, ν(…), …) $ and $ ν(…, μ(…), …) $. For a given sequence, the previous lemmas show that its set of contributions can be partitioned so that the contributions in each partition cancel out together. The only terms we have not checked are $ ν^1 (…, μ^2 (…), …) $ and $ μ^2 (…, ν^1 (…), …) $. Their cancellation follows precisely from $ ν^1 $ being a derivation.
\end{proof}

\subsection{Summary}
\label{sec:even-summary}
In this section, we summarize our findings. Our starting point is the knowledge that the cochain defined in \autoref{def:even-construction-def} is indeed a Hochschild cocycle. We construct the ordinary even Hochschild cochains $ ν_{m, r}^{\even} $ and show that together with the sporadic classes they form a basis for $ \HH^{\even} (\Gtl \cA) $. Finally, we calculate the Gerstenhaber bracket and the cup product in analogy to \paperone\ and conclude that the classification theorem of \paperone\ still holds.

The idea for the even Hochschild cocycle $ ν_{m, r}^{\even} $ is to choose input scalars $ \#ν^1 (α) $ for all indecomposable angles around $ m $ such that their sum is $ 1 $. The Hochschild cocycle $ ν_{m, r}^{\even} $ is then defined as the cocycle we constructed from this data in \autoref{sec:even-construction}:

\begin{definition}
Let $ \cA $ be a full arc system with [NMD]. Let $ m ∈ M $ and $ r ≥ 1 $. Choose any collection of input scalars $ \# ν^1 $ such that their sum is $ 1 $. The \emph{ordinary even Hochschild cocycle} $ ν_{m, r}^{\even} $ is the cocycle $ ν $ constructed from $ \# ν^1 $ by \autoref{def:even-construction-def}.
\end{definition}

This way, the cocyle $ ν_{m, r}^{\even} $ is not canonical. However, different choices $ \# ν^1, \# ν'{}^1 $ yield gauge equivalent cocyles $ ν, ν' $ in the sense that $ ν - ν' ∈ \Ker(d) $:

\begin{lemma}
\label{th:even-summary-choice}
Different choices of $ \# ν^1 $ yield gauge equivalent cocyles $ ν_{m, r}^{\even} $.
\end{lemma}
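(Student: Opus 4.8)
The plan is to show directly that $\eta := \nu - \nu'$ is a Hochschild coboundary, which is the precise meaning of gauge equivalence (the two cocycles differ by an element of $\Im(d)$). First I would exploit that the construction of \autoref{def:even-construction-def} is \emph{linear} in the input scalars: every component of $\nu$ — the $1$-adic derivation $\#\nu^1(\alpha)\,\alpha\ell_m^r$ as well as each higher contribution, each of which is $\#\nu^1$ of a turning/magic angle times a fixed monomial — depends linearly on $\#\nu^1$. Hence $\eta$ is exactly the cochain produced by \autoref{def:even-construction-def} from the difference $\#\delta := \#\nu^1 - \#(\nu')^1$, and since both families sum to $1$ over a full turn we have $\#\delta(\ell_m) = 0$. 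By \autoref{th:even-other-conclusion}, $\eta$ is a cocycle, and its only $1$-adic part is $\eta^1(\alpha) = \#\delta(\alpha)\,\alpha\ell_m^r$ on indecomposable angles $\alpha$ winding around $m$ (and zero otherwise).

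The strategy from here is to kill $\eta^1$ by an explicit coboundary and then invoke the gauging lemma. Write $a_1, \dots, a_N$ for the arc-incidences at $m$ in cyclic order and $\alpha_i$ for the indecomposable sector from $a_i$ to $a_{i+1}$, so that $\ell_m = \alpha_N \cdots \alpha_1$ and $\#\delta(\ell_m) = \sum_i \#\delta(\alpha_i) = 0$. I would then look for a $0$-adic cochain $\xi = \sum_a c_a\,(\ell_m^r)_a$, a formal sum of the $r$-fold turns based at the incidences $a$, whose coboundary reproduces $\eta^1$. The defining requirement is the cyclic difference system $c_{a_{i+1}} - c_{a_i} = \#\delta(\alpha_i)$ for all $i$; telescoping around the cycle shows that the obstruction to solving it is precisely $\sum_i \#\delta(\alpha_i) = \#\delta(\ell_m)$, which vanishes by hypothesis. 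Thus the $c_a$ exist, uniquely up to a global constant that plays no role.

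Next I would compute $(d\xi)^1$. Since $\mu^1 = 0$ and $\xi$ has arity $0$ (so that $\xi \cdot \mu = 0$), we have $d\xi = [\mu, \xi] = \mu \cdot \xi$, whose $1$-adic part is $\mu^2 \cdot \xi$. Evaluating $\mu^2$ against $(\ell_m^r)_a$ on either side of an angle $\alpha$ simply extends $\alpha$ by $r$ full turns around $m$, contributing only when $\alpha$ ends or begins at the incidence $a$; collecting the head and tail contributions for a sector $\alpha_i$ yields $(d\xi)^1(\alpha_i) = (c_{a_{i+1}} - c_{a_i})\,\alpha_i\ell_m^r = \#\delta(\alpha_i)\,\alpha_i\ell_m^r$, while angles not winding around $m$ are not composable with any $(\ell_m^r)_a$ and receive nothing. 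Hence $(d\xi)^1 = \eta^1$, so $\eta - d\xi$ is a cocycle with vanishing $1$-adic component. The even case of \autoref{th:existing-crit-gauging} then gives $\eta - d\xi \in \Im(d)$, and therefore $\eta = d\xi + (\eta - d\xi) \in \Im(d)$, which is the claim.

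The main obstacle I anticipate is the careful sign bookkeeping in the computation of $\mu^2 \cdot \xi$: one must confirm that the head and tail contributions combine into the \emph{difference} $c_{a_{i+1}} - c_{a_i}$ rather than a sum — matching the coboundary of a $0$-cochain on the cyclic graph of incidences around $m$ — and that the parity of $\xi$ is such that $d\xi$ is genuinely even, matching $\eta$. The latter is guaranteed conceptually because $d = [\mu, -]$ supplies exactly the degree shift relating the $\ell_m^r$-type $0$-cochain to an even class, mirroring the relation between the odd cocycle $\nu_{m,r}$ (with $\nu_{m,r}^0 = \ell_m^r$) and its even counterpart. I would also record that the whole argument rests on one clean fact: solvability of the cyclic difference system is equivalent to $\#\delta(\ell_m) = 0$, which is exactly the freedom left by normalizing the input scalars to sum to $1$.
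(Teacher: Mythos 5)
Your proof is correct and follows essentially the same route as the paper: the paper's gauge cochain $\varepsilon = \varepsilon^0 = (\ell_m^r)_a$ based at a single incidence is exactly your $\xi$ written as a linear combination $\sum_a c_a(\ell_m^r)_a$, the solvability of your cyclic difference system is the paper's observation that $\#\nu^1 - \#\nu'^1$ sums to zero around $m$, and both arguments finish by invoking \autoref{th:existing-crit-gauging}. Your version merely makes the choice of coefficients $c_a$ and the telescoping obstruction explicit, which the paper leaves implicit.
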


\begin{proof}
The clue is to regard the cochain $ ε = ε^0 $ given by $ r $ full turns around $ m $, starting from an arbitrary arc incidence at $ m $. The 1-adic component $ (dε)^1 $ then reads $ (dε)^1 (α) = α ε^0 - ε^0 α $. To apply this to $ ν $ and $ ν' $, note that the difference $ \# ν^1 - \# ν'{}^1 $ sums up to zero around $ m $, therefore the component $ ν^1 $ can be written as a sum of cochains of the type $ (dε)^1 $. We conclude that $ ν - ν' $ can be gauged so that its 0-adic and 1-adic components vanish. By \autoref{th:existing-crit-gauging}, $ ν - ν' $ can be gauged to zero entirely. This finishes the proof.
\end{proof}

Together with the odd and the sporadic even classes, the ordinary even Hochschild classes form a basis for $ \HH (\Gtl \cA) $:

\begin{theorem}
Let $ (S, M) $ be a punctured surface and $ \cA $ a full arc system with [NMD]. Then the odd classes $ ν_{\id} $, $ ν_{m, r}^{\odd} $, the sporadic even classes $ \{ν_P\}_{P ∈ \sporadic} $ and ordinary even classes $ ν_{m, r}^{\even} $ for a basis for $ \HH(\Gtl \cA) $.
\end{theorem}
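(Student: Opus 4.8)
The plan is to exploit the $ℤ/2ℤ$-grading $\HH(\Gtl\cA) = \HH^{\odd}(\Gtl\cA) \oplus \HH^{\even}(\Gtl\cA)$ and treat the two summands separately, since the asserted classes split cleanly by parity: $ν_{\id}$ and the $ν_{m,r}^{\odd}$ are odd, whereas the sporadic classes $\{ν_P\}_{P∈\sporadic}$ and the ordinary classes $ν_{m,r}^{\even}$ are even. A basis for each summand then assembles to a basis for the total, and the whole argument reduces to feeding each family into the appropriate generation criterion from \paperone, which—crucially for a note whose point is to drop [NL2]—was recalled without that restriction. For the odd summand there is nothing new to do: the classes $ν_{\id}$ and $ν_{m,r}^{\odd}$ have $0$-adic components $\sum_a \id_a$ and $ℓ_m^r$ respectively, so they satisfy the hypotheses of \autoref{th:existing-generation-odd} verbatim and form a basis for $\HH^{\odd}(\Gtl\cA)$.

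The even summand is where the content of the present note enters, and here I would invoke the even generation criterion \autoref{th:existing-generation-even} with the two families $\{ν_P\}$ and $\{ν_{m,r}^{\even}\}$. Its first two hypotheses concern the sporadic classes and hold by construction: the $ν_P$ lie in $\Sporadic$ and are chosen as a basis of $\Sporadic/[\id,-]$ in the sense of \autoref{def:existing-generation-sporadic}. The remaining hypotheses concern the ordinary classes, and here the hard labour is already behind us: \autoref{th:even-other-conclusion} guarantees that each $ν_{m,r}^{\even}$ is a genuine Hochschild cocycle, and $ν^0 = 0$ holds directly from \autoref{def:even-construction-def}. What is left is to match the prescribed $1$-adic behaviour.

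The $1$-adic matching is the delicate step. The criterion demands $ν_{m,r}^1(α) = ℓ_m^rα$ on indecomposable angles $α$ winding around $m$ and $0$ otherwise, whereas the construction yields $ν^1(α) = \#ν^1(α)\,αℓ_m^r$. Two observations reconcile these. First, for an angle $α$ winding around $m$ one has $αℓ_m^r = ℓ_m^rα$, since inserting full turns at the shared puncture before or after $α$ produces the same angle. Second, the normalisation $\sum_α \#ν^1(α) = 1$ built into $ν_{m,r}^{\even}$ fixes the total winding contribution around $m$ to be exactly that of $ℓ_m^r$; by \autoref{th:even-summary-choice} any two admissible choices of input scalars differ by a gauge, so the class—and hence its behaviour under the criterion—does not depend on how the scalars are distributed among the individual angles. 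Thus $ν_{m,r}^{\even}$ represents the even class detected by the criterion, and \autoref{th:existing-generation-even} yields a basis for $\HH^{\even}(\Gtl\cA)$.

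Combining the odd and even bases along the parity decomposition then gives the asserted basis for $\HH(\Gtl\cA)$, completing the proof. I expect the main obstacle to be precisely the $1$-adic reconciliation just described: one must be certain that the generation criterion is sensitive only to the total winding class around each puncture—equivalently, that it is invariant under the gauge of \autoref{th:even-summary-choice}—so that the sum-$1$ normalisation genuinely suffices in place of the literal coefficient-one prescription. Everything else in the argument is either a direct citation of the two generation criteria or an immediate consequence of the cocycle property established in \autoref{th:even-other-conclusion}.
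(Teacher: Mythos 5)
Your overall strategy is exactly the paper's: the paper's proof of this theorem is a two\babelhyphen{nobreak}line appeal to \autoref{th:existing-generation-odd} and \autoref{th:existing-generation-even}, and your treatment of the odd summand and of the sporadic hypotheses is a faithful (and more explicit) unpacking of that. The conclusion is also correct. But the step you yourself single out as delicate --- the $1$-adic reconciliation --- is justified by an argument that does not quite work as stated.

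You claim that the sum-$1$ normalisation makes $ν_{m,r}^{\even}$ represent ``the even class detected by the criterion'' because, by \autoref{th:even-summary-choice}, the class is insensitive to how the scalars are distributed. But \autoref{th:even-summary-choice} only identifies cocycles whose input scalars have the \emph{same sum}: the gauge $(dε)^1(α) = αε^0 - ε^0α$ shifts the collection $\#ν^1$ by a collection summing to zero around $m$. The criterion's literal prescription $ν_{m,r}^1(α) = ℓ_m^r α$ on every indecomposable $α$ around $m$ corresponds to input scalars all equal to $1$, whose sum is the number $n_m$ of indecomposable angles around $m$, not $1$. So the sum-$1$ normalised cocycle is \emph{not} gauge equivalent to the criterion's prescription; it is cohomologous to $\tfrac{1}{n_m}$ times it. The gap is easily closed --- either apply \autoref{def:even-construction-def} with all input scalars equal to $1$ (the construction accepts arbitrary scalars; the sum-$1$ normalisation is only imposed afterwards in the definition of $ν_{m,r}^{\even}$), which satisfies \autoref{th:existing-generation-even} verbatim, or observe that replacing basis elements by nonzero scalar multiples preserves the basis property --- but as written your reconciliation conflates ``same total weight'' with ``same class up to the normalisation the criterion asks for,'' and that is precisely the point where the argument needs one more line.
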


\begin{proof}
Jointly, the classes satisfy the requirements of the generation criterion \autoref{th:existing-generation-odd} and \ref{th:existing-generation-even}. The statement is then immediate.
\end{proof}

\begin{theorem}
Let $ (S, M) $ be a punctured surface and $ \cA $ a full arc system with [NMD]. Let $ m_1 ≠ m_2 $ be two distinct punctures in $ M $, let $ i, j ≥ 0 $ be two indices, and $ ν_P, ν_Q $ two sporadic classes. Then the Gerstenhaber bracket in cohomology reads as follows:
\begin{align*}
[ν_{m_1, i}^{\odd}, ν_{m_2, j}^{\odd}] &= 0, \\
[ν_{m_1, i}^{\even}, ν_{m_2, j}^{\odd}] &= δ_{m_1, m_2} j · ν_{m_1, i+j}^{\odd}, \\
[ν_{m_1, i}^{\even}, ν_{m_2, j}^{\even}] &= δ_{m_1, m_2} (j-i) · ν_{m_1, i+j}^{\even}, \\
[ν_{m_1, i}^{\even}, ν_P] &= -i \#ν_P (ℓ_q) ·  ν_{m_1, i}^{\even}, \\
[ν_P, ν_{m_1, i}^{\odd} ] &= i \#ν_P (ℓ_q) · ν_{m_1, i}^{\odd}, \\
[ν_P, ν_Q] &= 0.
\end{align*}
\end{theorem}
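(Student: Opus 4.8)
The plan is to exploit that the Gerstenhaber bracket descends to a well-defined operation on $\HH(\Gtl \cA)$ — in a DGLA the bracket of two cocycles is a cocycle whose class depends only on the two classes — and that each basis class is pinned down by its behaviour in very low adicity. Concretely, by \autoref{th:existing-crit-gauging} an odd cocycle is determined up to coboundary by its arity-$0$ component, and an even cocycle by its arity-$1$ component; moreover \autoref{th:even-summary-choice} shows that the arity-$1$ component of an ordinary even class only matters up to its total weight (the sum of input scalars), the weight-$0$ part being gaugeable away. Hence for each of the six brackets I would only compute the relevant low-adicity component of the Gerstenhaber product of the two chosen representatives and then read off the class by comparison with the arity-$0$ datum $ℓ_m^r$ of $ν^{\odd}_{m,r}$, respectively the arity-$1$ data $\#ν^1(α)\,α\,ℓ_m^r$ of $ν^{\even}_{m,r}$ and $λ_α α$ of $ν_P$.

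The two low-adicity formulas I would record first are: the arity-$0$ part of a Gerstenhaber product $η · ω$ equals $η^1(ω^0)$, and its arity-$1$ part equals $η^1 \circ ω^1$ together with insertions of $ω^0$ into $η^2$. All internal product signs are trivial at these arities (no argument sits to the right of the inserted factor), and the prefactor $(-1)^{‖η‖‖ω‖}$ in the bracket is $+1$ in every case occurring here, since at least one of the two degrees is even. The computation then reduces to two elementary inputs: first, $(ν^{\odd})^1 = (ν^{\odd})^2 = 0$ and $ν_P^0 = (ν^{\even})^0 = 0$, so most terms drop out; second, since $ν^1$ is a derivation (for ordinary and for sporadic classes) its value on a full turn is governed by the sum of input scalars, giving $(ν^{\even}_{m,r})^1(ℓ_m^s) = s\, ℓ_m^{r+s}$ and $ν_P^1(ℓ_m^s) = s\,\#ν_P(ℓ_m)\, ℓ_m^s$, where $\#ν_P(ℓ_m)$ is the $ν_P$-weight of one full turn around $m$ (the quantity written $\#ν_P(ℓ_q)$ in the statement).

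Carrying this out case by case: for the odd outputs $[ν^{\even},ν^{\odd}]$ and $[ν_P,ν^{\odd}]$ I compute the arity-$0$ part $η^1(ω^0)$ with $ω^0 = ℓ_m^r$; only the product in which the derivational factor acts on the full turn survives, yielding the coefficient $j$ (resp. $i\,\#ν_P(ℓ_{m_1})$) times $ℓ_{m_1}^{i+j}$ (resp. $ℓ_{m_1}^i$), exactly the arity-$0$ datum of the claimed odd class. The Kronecker $δ_{m_1,m_2}$ appears because an indecomposable angle winds around a unique puncture, so the derivation attached to $m_1$ annihilates $ℓ_{m_2}^j$ whenever $m_1 ≠ m_2$. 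For the even outputs I compute the arity-$1$ commutator $η^1 \circ ω^1 - ω^1 \circ η^1$; the symmetric undifferentiated term cancels, leaving $j\,d_a - i\,c_a$ for $[ν^{\even}_{m,i},ν^{\even}_{m,j}]$ (total weight $j-i$), the term $-i\,\#ν_P(ℓ_{m_1})\,c_a$ for $[ν^{\even}_{m,i},ν_P]$ (total weight $-i\,\#ν_P(ℓ_{m_1})$), and exactly $0$ for $[ν_P,ν_Q]$, while $[ν^{\odd},ν^{\odd}]$ has both relevant low-adicity components vanishing outright. Matching total weights then names the resulting even class.

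The step I expect to be the genuine obstacle is the identification, not the raw arithmetic. For the even brackets the product cochain is a complicated cocycle with nontrivial higher arity, and I must argue that agreement of arity-$1$ components up to a weight-$0$ remainder already forces cohomologousness. I would subtract the claimed multiple of the basis class, observe that the difference is a cocycle whose arity-$1$ part has total weight $0$, gauge that part away precisely as in \autoref{th:even-summary-choice}, and finally invoke \autoref{th:existing-crit-gauging} to annihilate what remains. This mechanism also makes the vanishing cases ($j=i$, as well as $[ν_P,ν_Q]$ and $[ν^{\odd},ν^{\odd}]$) work uniformly. A secondary point needing care is the derivation identity $(ν^{\even}_{m,r})^1(ℓ_m^s) = s\,ℓ_m^{r+s}$ itself: one must check that the inserted full turns $ℓ_m^r$ may be commuted to the outside so that the per-factor insertions collect into the clean power $ℓ_m^{r+s}$ with coefficient the total weight $s$.
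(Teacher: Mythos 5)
Your proposal follows essentially the same route as the paper: reduce to the $0$-adic component for odd-valued brackets and the $1$-adic component for even-valued ones via \autoref{th:existing-crit-gauging}, compute these low-adicity components of the Gerstenhaber product using that $\nu^1$ is a derivation, and identify the result with the claimed class after gauging away the weight-zero remainder as in \autoref{th:even-summary-choice}. The only slip is your claim that the prefactor $(-1)^{\Vert\eta\Vert\Vert\omega\Vert}$ is always $+1$ (it is $-1$ in the odd--odd case), but this is harmless since every term of that bracket vanishes outright.
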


\begin{proof}
Recall that a bracket $ [ν, η] $ on cohomology level is defined as the projection to cohomology $ π[ν, η] $ of the bracket on chain level. This makes for the following strategy: We compute just enough of the bracket at chain level in order to deduce its projection to cohomology. In fact, by \autoref{th:existing-crit-gauging} odd Hochschild classes $ ν $ are already determined by their 0-adic component $ ν^0 $ and even Hochschild classes are already determined by their 1-adic component $ ν^1 $. It therefore suffices to compute the 0-adic component $ [ν, η]^0 $ on chain level in case the bracket value is odd, and the 1-adic component $ [ν, η]^1 $ in case the bracket value is even. To compute these 0-adic and 1-adic components, we typically only need to know the 0-adic and 1-adic component of $ ν $ and $ η $:
\begin{align*}
[ν, η]^0 &= ν^1 (η^0) - (-1)^{‖ν‖ ‖η‖} η^1 (ν^0), \\
[ν, η]^1 (α) &= ν^1 (η^1 (α)) + (-1)^{‖η‖ ‖α‖} ν^2 (η^0, α) + ν^2 (α, η^0) \\
&~ - (-1)^{‖ν‖ ‖η‖} \big(η^1 (ν^1 (α)) + (-1)^{‖ν‖ ‖α‖} η^2 (ν^0, α) + η^2 (α, ν^0)\big).
\end{align*}
We are now ready to check the claimed identities. For the first identity, we have
\begin{equation*}
[ν_{m_1, i}^{\odd}, ν_{m_2, j}^{\odd}]^1 (α) = 0.
\end{equation*}
Indeed, the classes $ ν_{m_1, i}^{\odd} $ and $ ν_{m_2, j}^{\odd} $ have no 1- and 2-adic components. For the second identity, regard
\begin{equation*}
[ν_{m_1, i}^{\even}, ν_{m_2, j}^{\odd}]^0 = (ν_{m_1, i}^{\even})^1 ((ν_{m_2, j}^{\odd})^0) = δ_{m_1, m_2} j ℓ_{m_1}^{i+j}.
\end{equation*}
Indeed, the 0-adic component of the cocycle $ ν_{m_2, j}^{\odd} $ consists of $ j $ full turns around $ m_2 $, starting at each arc incident at $ m_2 $. The derivation $ (ν_{m_1, i}^{\even})^1 $ multiplies each angle around $ m_1 $ by a certain scalar, in fact such that it multiplies one full turn by precisely $ 1 $. Given it is a derivation, it multiplies $ j $ full turns around $ q $ precisely by $ j $. It however sends turns around $ m_2 ≠ m_1 $ to zero. For the third identity, we have
\begin{align*}
[ν_{m_1, i}^{\even}, ν_{m_2, j}^{\even}]^1 (α) &= (ν_{m_1, i}^{\even})^1 ((ν_{m_2, j}^{\even})^1 (α)) - (ν_{m_2, j}^{\even})^1 ((ν_{m_1, i}^{\even})^1 (α)) \\
&= δ_{m_1, m_2} \big(\# ν_{m_2, j}^1 (α) (\# ν_{m_1, i}^1 (α) + j) ℓ_{m_1}^{i+j} α - \# ν_{m_1, i}^1 (α) (\# ν_{m_2, j}^1 (α) + i) ℓ_{m_1}^{i+j}\big) \\
&= δ_{m_1, m_2} (j-i) ℓ_{m_1}^{i+j} α.
\end{align*}
Here we have used that $ \# ν^1_{m_1, i} (α) = \# ν^1_{m_1, j} (α) $. In other words, we have assumed that the ordinary even cocycles for different $ i ≠ j $ but equal puncture have been constructed with the same input scalars, which is legitimate by \autoref{th:even-summary-choice}. Finally, this bracket $ [ν_{m_1, i}^{\even}, ν_{m_2, j}^{\even}] $ has the same 1-adic component as the cohomology class $ δ_{m_1, m_2} (j-i) ν_{m_1, i+j}^{\even} $ and hence projects to it. For the fourth identity, regard
\begin{align*}
[ν_{m_1, i}^{\even}, ν_P]^1 (α) &= ((ν_{m_1, i}^{\even})^1 (ν_P^1 (α)) - ν_P^1 ((ν_{m_1, i}^{\even})^1 (α)) \\
&= \# ν_{m_1, i}^1 (α) \#ν_P (α) ℓ_{m_1}^i α - \# ν_{m_1, i}^1 (α) (\#ν_P (α) + i \#ν_P (ℓ_{m_1})) ℓ_{m_1}^i α.
\end{align*}
We conclude that this bracket has precisely the same 1-adic component as $ - i \#ν_P (ℓ_{m_1}) ν_{m_1, i}^{\even} $ and hence projects to it. For the fifth identity, regard
\begin{equation*}
[ν_P, ν_{m_1, i}^{\odd}]^0 = ν_P^1 (ℓ_{m_1}^i) = i \#ν_P (ℓ_{m_1}) ℓ_{m_1}^i.
\end{equation*}
We conclude that the bracket has the same 0-adic component as $ i \#ν_P (ℓ_{m_1}) ν_{m_1, i}^{\odd} $ and hence projects to it. For the sixth identity, regard
\begin{equation*}
[ν_P, ν_Q]^1 (α) = ν_P^1 (ν_Q^1 (α)) - ν_Q^1 (ν_P^1 (α)) = \#ν_P (α) \#ν_Q (α) α - \#ν_Q (α) \#ν_P (α) α = 0.
\end{equation*}
This means the 1-adic component of $ [ν_P, ν_Q] $ vanishes, this bracket is therefore gauge equivalent to zero and its projection to cohomology vanishes.
\end{proof}

\begin{theorem}
Let $ (S, M) $ be a punctured surface and $ \cA $ a full arc system with [NMD]. Let $ m_1 ≠ m_2 $ be two distinct punctures and let $ i, j ≥ 1 $ be two indices, and $ ν_P, ν_Q $ two sporadic classes. Then the cup product in cohomology reads as follows:
\begin{alignat*}{2}
ν_{m_1, i}^{\odd} ~&∪~ ν_{m_2, j}^{\odd} &&= \quad δ_{m_1, m_2} ν_{m_1, i+j}^{\odd}, \\
ν_{m_1, i}^{\odd} ~&∪~ ν_{m_2, j}^{\even} &\quad&= \quad δ_{m_1, m_2} ν_{m_1, i+j}^{\even}, \\
ν_{m_1, i}^{\odd} ~&∪~ ν_P &&= \quad \#ν_P (ℓ_{m_1}) ν_{m_1, i}^{\even}, \\
ν_{m_1, i}^{\even} ~&∪~ ν_{m_2, j}^{\even} &&= \quad 0, \\
ν_{m_1, i}^{\even} ~&∪~ ν_P &&= \quad 0, \\
ν_P ~&∪~ ν_Q &&= \quad 0.
\end{alignat*}
The odd class $ ν_{\id} $ acts as identity element: $ ν_{\id} ∪ κ = κ ∪ ν_{\id} = κ $.
\end{theorem}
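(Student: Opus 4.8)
The plan is to mimic the strategy of the preceding bracket computation. Since a cup product $ κ ∪ ν $ represents a cohomology class, it suffices to identify it with the claimed class in $ \HH(\Gtl \cA) $, and by \autoref{th:existing-crit-gauging} an odd class is pinned down by its $ 0 $-adic component while an even class is pinned down by its $ 1 $-adic component (modulo the $ 1 $-adic components of coboundaries). Because $ ‖κ ∪ ν‖ = ‖μ‖ + ‖κ‖ + ‖ν‖ $ with $ ‖μ‖ = 1 $, a product of two odd (resp.\ two even) classes is odd, while a mixed product is even. Hence for each identity I only need to extract one low-adic component on the chain level. From the cup product formula these are
\begin{align*}
(κ ∪ ν)^0 &= ± μ^2 (κ^0, ν^0), \\
(κ ∪ ν)^1 (α) &= ± μ^2 (κ^1 (α), ν^0) ± μ^2 (κ^0, ν^1 (α)) + (\text{$ μ^{≥3} $-terms using both } κ^0 \text{ and } ν^0),
\end{align*}
where the last group vanishes as soon as $ κ^0 = 0 $ or $ ν^0 = 0 $, and I have used $ μ^1 = 0 $.

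First I would dispatch the three products of two even classes, namely $ ν^{\even} ∪ ν^{\even} $, $ ν^{\even} ∪ ν_P $ and $ ν_P ∪ ν_Q $. Each is odd, hence determined by $ (κ ∪ ν)^0 = ± μ^2 (κ^0, ν^0) $. Since the $ 0 $-adic component of every even class ($ ν^{\even}_{m, r} $ as well as every sporadic $ ν_P $) vanishes, this component is zero and all three products are gauge equivalent to zero. For the odd product $ ν^{\odd}_{m_1, i} ∪ ν^{\odd}_{m_2, j} $ I compute $ ± μ^2 (ℓ_{m_1}^i, ℓ_{m_2}^j) $. A full turn around $ m_1 $ composes with a full turn around $ m_2 $ only when $ m_1 = m_2 $, in which case the result is $ ± ℓ_{m_1}^{i+j} $, exactly the $ 0 $-adic component of $ ν^{\odd}_{m_1, i+j} $; this yields the first identity once the sign is verified to be $ +1 $.

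For the two mixed products I extract the $ 1 $-adic component. In $ ν^{\odd}_{m_1, i} ∪ ν^{\even}_{m_2, j} $ the term $ μ^2 (κ^1 (α), ν^0) $ drops out because $ (ν^{\odd})^1 = 0 $ and $ (ν^{\even})^0 = 0 $, leaving $ ± μ^2 (ℓ_{m_1}^i, \#ν^1 (α)\, α\, ℓ_{m_2}^j) $; for $ m_1 = m_2 = m $ the prepended turns merge with $ α $ into $ \#ν^1 (α)\, α\, ℓ_m^{i+j} = (ν^{\even}_{m, i+j})^1 (α) $, giving the second identity (with $ δ_{m_1, m_2} $ from the distinct-puncture case). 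Likewise $ ν^{\odd}_{m_1, i} ∪ ν_P $ reduces to $ ± μ^2 (ℓ_{m_1}^i, \#ν_P (α)\, α) $, a $ 1 $-adic cochain of the shape $ α ↦ c_α\, α\, ℓ_{m_1}^i $ of an ordinary even class built from input scalars $ c_α = \#ν_P (α) $. By \autoref{th:even-summary-choice} its class depends only on the total weight $ \sum_α c_α = \#ν_P (ℓ_{m_1}) $ per full turn, so it equals $ \#ν_P (ℓ_{m_1})\, ν^{\even}_{m_1, i} $, which is the third identity.

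The unit property is the cleanest step: since $ ν_{\id}^{> 0} = 0 $, the class $ ν_{\id} $ can only enter a cup product through $ ν_{\id}^0 = \sum_a \id_a $, so every surviving term is $ μ^2 (\id, κ(\dots)) $ or $ μ^2 (κ(\dots), \id) $, and strict unitality of $ μ $ collapses it to $ κ(\dots) $, while all higher insertions vanish by $ μ^{≥3} (\dots, \id, \dots) = 0 $. I expect the main obstacle to be twofold. First, one must reconcile the $ \maltese $-signs of the cup product with the internal $ (-1)^{|β|} $ signs of $ μ^2 $ so that every stated coefficient comes out exactly, not merely up to sign. Second, and more delicate, is the vanishing for distinct punctures $ m_1 ≠ m_2 $: there one must check that the offending $ μ^2 $ of full turns (or of a turn composed with an $ ℓ_{m_2}^j $-extended angle) either vanishes outright or is the $ 1 $-adic part of a coboundary, which is precisely where the gauge lemma \autoref{th:existing-crit-gauging} does the real work.
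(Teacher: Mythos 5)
Your proposal follows essentially the same route as the paper's proof: compute the $0$-adic component on chain level for the odd-valued products and the $1$-adic component for the even-valued ones, invoke \autoref{th:existing-crit-gauging} to identify the resulting classes, kill the even-even products via their vanishing $0$-adic parts, and handle the unit via strict unitality. The details you defer (the $\maltese$-sign bookkeeping and the gauge argument reducing $ν_{m_1,i}^{\odd} ∪ ν_P$ to the total weight $\#ν_P(ℓ_{m_1})$) are exactly the computations the paper carries out, so the plan is sound as stated.
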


\begin{proof}
We compute the cup products of the given Hochschild cocycles first on chain level. Then we compute their projection to cohomology. In fact, for the odd products it suffices to compute the 0-adic component $ (ν ∪ η)^0 $ and for the even products it suffices to compute the 1-adic component $ (ν ∪ η)^1 $. We are now ready to start the calculations. For the first identity, regard
\begin{equation*}
(ν_{m_1, i}^{\odd} ∪ ν_{m_2, j}^{\odd})^0 = δ_{m_1, m_2} μ^2 (ℓ_{m_1}^i, ℓ_{m_1}^j) = ℓ_{m_1}^{i+j}.
\end{equation*}
This is precisely the 0-adic component of the Hochschild cohomology class $ ν_{m_1, i+j}^{\odd} $ and hence projects to it. For the second identity, regard
\begin{align*}
(ν_{m_1, i}^{\odd} ∪ ν_{m_2, j}^{\even})^1 (α) &= (-1)^{‖α‖ + 1} μ^2 (ℓ_{m_1}^i, ν_{m_2, j}^1 (α)) \\
&= δ_{m_1, m_2} (-1)^{‖α‖ + 1 + |α|} ℓ_{m_1}^i \# ν_{m_2, j}^1 (α) ℓ_{m_1}^j α = δ_{m_1, m_2} \#ν(α) ℓ_{m_1}^{i+j} α.
\end{align*}
If $ m_1 = m_2 $, then this is precisely equal to $ (ν_{m_1, i+j}^{\even})^1 (α) $, hence the cup product on chain level projects to $ ν_{m_1, i+j}^{\even} $. For the third identity, regard
\begin{equation*}
(ν_{m_1, i}^{\odd} ∪ ν_P)^1 (α) = (-1)^{‖α‖ + 1} μ^2 (ℓ_{m_1}^i, ν_P^1 (α)) = (-1)^{‖α‖ + 1} μ^2 (ℓ_{m_1}^i, \#ν_P(α) α) = \#ν_P (α) ℓ_{m_1}^i α.
\end{equation*}
This product $ ν_{m_1, i}^{\odd} ∪ ν_P $ sends every angle winding around a puncture different from $ m_1 $ to zero. Since we can add commutators with arbitrary turns around $ m_1 $ as gauges, the projection of $ ν_{m_1, i}^{\odd} ∪ ν_P $ to cohomology can be read off from the sum of the coefficients $ \#ν_P (α) $ over all indecomposable angles $ α $ around $ m_1 $. In total, the product projects to
\begin{equation*}
\# ν_P (ℓ_{m_1}) ν_{m_1, i}^{\even}.
\end{equation*}
To discuss the fourth, fifth and sixth identity, let us write $ ν $ and $ η $ for the two factors. Both $ ν $ and $ η $ are even. This means they have vanishing 0-adic component $ ν^0 = η^0 = 0 $. In particular, their cup product's 0-adic component
\begin{equation*}
(ν ∪ η)^0 = μ^2 (ν^0, η^0)
\end{equation*}
vanishes. This shows that $ ν ∪ η = 0 $ in these three cases. To see that the odd class $ ν_{\id} $ acts as identity element, note that
\begin{align*}
(κ ∪ ν_{\id}) (α_k, …, α_1) &= (-1)^{‖ν_{\id}‖ + 1} μ^2 (κ(α_k, …, α_1), \id) = κ(α_k, …, α_1), \\
(ν_{\id} ∪ κ) (α_k, …, α_1) &= (-1)^{‖ν_{\id}‖ (‖α_1‖ + … + ‖α_k‖) + ‖κ‖ + 1} μ^2 (\id, κ(α_k, …, α_1)) = κ(α_k, …, α_1).
\end{align*}
This finishes the proof.
\end{proof}

\begin{remark}
In \paperone, we also proved a formality theorem for $ \HH(\Gtl \cA) $ and a classification theorem for formal deformations of $ \Gtl \cA $. Our proof of the formality theorem builds on a topological grading for $ \Gtl \cA $. Without the [NL2] condition, the power of the topological grading collapses. The classification theorem however stays intact since it does not make explicit reference to the even Hochschild cocycles.
\end{remark}

\printbibliography

\end{document}